\documentclass[10pt]{amsart}

\pdfoutput=1
\usepackage[text={420pt,660pt},centering]{geometry}

\usepackage{cancel}
\usepackage{graphicx}
\IfFileExists{MnSymbol.sty}{\usepackage{MnSymbol}}{\usepackage{amssymb}}
\usepackage{mathtools} 
\usepackage[colorlinks=true, pdfstartview=Fit, linkcolor=blue, citecolor=blue, urlcolor=blue,pagebackref=false]{hyperref}
\usepackage{orcidlink}
\usepackage{microtype}

\usepackage{bm}
\IfFileExists{dsfont.sty}{\usepackage{dsfont}}{}

\usepackage{mathrsfs}
\usepackage{xcolor}
\IfFileExists{accents.sty}{\usepackage{accents}}{}
\providecommand{\accentset}[2]{\overset{#1}{#2}}

\IfFileExists{ulem.sty}{\usepackage[normalem]{ulem}}{}

\definecolor{darkgreen}{rgb}{0,0.5,0}
\definecolor{darkblue}{rgb}{0,0,0.7}
\definecolor{darkred}{rgb}{0.9,0.1,0.1}

\newtheorem{proposition}{Proposition}
\newtheorem{theorem}[proposition]{Theorem}
\newtheorem{lemma}[proposition]{Lemma}
\newtheorem{corollary}[proposition]{Corollary}

\newtheorem{conjecture}[proposition]{Conjecture}
\theoremstyle{remark}

\theoremstyle{definition}
\newtheorem{definition}[proposition]{Definition}

\numberwithin{equation}{section}
\numberwithin{proposition}{section}
\numberwithin{figure}{section}
\numberwithin{table}{section}

\newcommand{\N}{\mathbb{N}}

\newcommand{\R}{\mathbb{R}}

\newcommand{\E}{\mathbb{E}}
\renewcommand{\P}{\mathbb{P}}

\newcommand{\B}{\mathsf{B}}

\renewcommand{\le}{\leqslant}
\renewcommand{\ge}{\geqslant}
\renewcommand{\leq}{\leqslant}
\renewcommand{\geq}{\geqslant}

\renewcommand{\subset}{\subseteq}
\renewcommand{\bar}{\overline}

\newcommand{\Ll}{\left}
\newcommand{\Rr}{\right}

\newcommand{\1}{\mathbf{1}}

\newcommand{\mcl}{\mathcal}

\newcommand{\mfk}{\mathfrak}

\DeclareMathOperator{\diam}{diam}
\DeclareMathOperator{\tr}{tr}
\DeclareMathOperator{\supp}{supp}

\newcommand{\cQ}{\mathcal{Q}}

\renewcommand*{\dot}[1]{%
\accentset{\mbox{\large\bfseries .}}{#1}}

\newcommand{\bx}{\mathbf{x}}
\newcommand{\bg}{\mathbf{g}}

\newcommand{\bz}{\mathbf{z}}
\newcommand{\bX}{\mathbf{X}}
\newcommand{\bY}{\mathbf{Y}}
\newcommand{\dd}{\,\mathrm d}
\DeclareMathOperator{\Ent}{Ent}
\newcommand{\Cov}{\mathsf C}
\newcommand{\overlap}{\mathsf R}
\DeclareMathOperator{\Var}{Var}

\newcommand{\norm}[1]{\lVert#1\rVert}
\newcommand{\ip}[2]{\langle#1,#2\rangle}
\newcommand{\stackref}[2]{\stackrel{#2}{#1}}

\newcommand{\cR}{\mathfrak R}
\newcommand{\bb}{\mathcal B_b}
\newcommand{\cP}{\mathcal P}

\author[H.-B.\ Chen]{Hong-Bin Chen\,\orcidlink{0000-0001-6412-0800}}
\address[Hong-Bin Chen]{NYU-ECNU Institute of Mathematical Sciences, NYU Shanghai, China}
\email{\href{mailto:hongbin.chen@nyu.edu}{hongbin.chen@nyu.edu}}

\author[F.-H. \ Ho]{Fu-Hsuan Ho\,\orcidlink{0000-0003-3178-5472}}
\address[Fu-Hsuan Ho]{Weizmann Institute of Science, Rehovot, Israel}
\email{\href{mailto:fu-hsuan.ho@weizmann.ac.il}{fu-hsuan.ho@weizmann.ac.il}}

\author[V.\ Issa]{Victor Issa\,\orcidlink{0009-0009-1304-046X}}
\address[Victor Issa]{LPSM, Sorbonne University, Paris, France}
\email{\href{mailto:victor.issa@ens-lyon.fr}{victor.issa@ens-lyon.fr}}

\begin{document}
\title[Free energy of non-convex vector spin glasses]{A formula for the free energy of non-convex vector spin glasses}
\date{October 4, 2026}
\begin{abstract}

We consider vector mean-field spin glass models with independent and identically distributed spins without assuming the covariance function is convex.
We identify the limit free energy as the minimum of a functional that can be written as the sum of a relative entropy and the solution of a Hamilton--Jacobi equation. 
The proof of this result relies on the observation that when the spin configurations of the system are constrained to have a given empirical measure, the corresponding free energy approximately satisfies a Hamilton--Jacobi equation 
with a uniformly bounded error. 
\end{abstract}
\maketitle
\section{Introduction}

\subsection*{The model}
Write $\R_+:=[0,\infty)$. Fix $D\in\N$, put $\B:=\{a\in\R^D:|a|\le1\}$, and let $\mu$ be a probability measure supported on $\B$. For $N\in\N$, we view a spin configuration $\sigma\in(\supp\mu)^N$ as a $D$-by-$N$ matrix, with columns $\sigma_{\cdot,i}\in\supp\mu$ for $i\in\{1,\dots,N\}$. For matrices or vectors $a$ and $b$ of the same dimension, let $a\cdot b:=\tr(ab^\intercal)$ and $|a|:=(a\cdot a)^{1/2}$. The overlap between two configurations $\sigma$ and $\tau$ is defined by
\begin{equation*}
 \overlap_N(\sigma,\tau):=\frac{\sigma\tau^\intercal}{N}.
\end{equation*}
Let $\xi:\R^{D\times D}\to\R$ be smooth, admit a power-series expansion converging absolutely at every matrix in $\R^{D\times D}$, and satisfy $\xi(0)=0$. For every $N\in\N$, 
we consider the Hamiltonian $H_N:\R^{D\times N}\to\R$ which is a centered Gaussian process with covariance
\begin{equation}\label{eq:covariance}
 \E H_N(\sigma)H_N(\tau)=N\xi\bigl(\overlap_N(\sigma,\tau)\bigr),\qquad \sigma,\tau\in\R^{D\times N}.
\end{equation}
These are the covariance assumptions in~\cite[Sections 1.1 and 1.5]{chen2025free}. We do not assume convexity of $\xi$.

For $N\in\N$ and $t\in\R_+$, define the free energy
\begin{equation}\label{eq:free-energy-zero}
 \bar F_{N,\mu^{\otimes N}}(t,0):=-\frac1N\E\log\int\exp\left(\sqrt{2t}\,H_N(\sigma)-tN\xi\bigl(\overlap_N(\sigma,\sigma)\bigr)\right)\mu^{\otimes N}(\mathrm d\sigma).
\end{equation}
The self-overlap correction term $-tN\xi\bigl(\overlap_N(\sigma,\sigma)\bigr)$ is introduced as a convenience to simplify the expression of the free energy when differentiating with respect to $t$. Theorem~\ref{thm:main} identifies the limiting free energy; the limit without this correction can be recovered by the procedure in~\cite{chen2024selfoverlap}.

\subsection*{The main result}

Let $S^D$ be the space of real symmetric $D$-by-$D$ matrices and $S^D_+ \subset S^D$ the cone of positive semi-definite matrices. Given $a,b \in S^D$, we write $a \preceq b$ when $b-a \in S^D_+$. We denote by $\cQ$ the set of right-continuous increasing paths $q:[0,1)\to S^D_+$, where increasing means that $q(r)\preceq q(r')$ for every $0\le r\le r'<1$. For $r\in[1,\infty]$, let
\begin{equation*}
 \cQ_r:=\cQ\cap L^r([0,1];S^D).
\end{equation*}
In Section~\ref{sec:enriched}, we define an enriched free energy $\bar F_{N,\lambda_N}(t,q)$ for every nonzero finite measure $\lambda_N$ on $\B^N$ and every $q\in\cQ_1$. In particular, $\bar F_{N,\mu^{\otimes N}}(t,q)$ denotes the enriched free energy associated with the original product reference measure $\mu^{\otimes N}$. At $q=0$, this definition agrees with~\eqref{eq:free-energy-zero}.

Write $\cP(\supp\mu)$ for the probability measures on $\supp\mu$ and $\bb(\supp\mu)$ for the bounded Borel functions from $\supp\mu$ to $\R$. For $\nu\in\cP(\supp\mu)$ and $h\in\bb(\supp\mu)$, let $e^h\nu$ denote the finite measure with density $e^h$ relative to $\nu$. Define $\psi_{\nu,h}:\cQ_1\to\R$ and $\varphi^\nu:\cQ_1\to\R$ by
\begin{align}\label{eq:initial-definition}
 \psi_{\nu,h}(q):=\bar F_{1,e^h\nu}(0,q),\qquad\text{and}\qquad
 \varphi^\nu(q):=\sup_{h\in\bb(\supp\mu)}\left\{\psi_{\nu,h}(q)+\int_{\supp\mu}h(a)\,\nu(\mathrm da)\right\}.
\end{align}

For $\nu\in\cP(\supp\mu)$, let $f^\nu:\R_+\times\cQ_2\to\R$ denote the solution of the following Hamilton--Jacobi equation
\begin{equation}
\label{eq:HJ-inf-dim}
    \begin{cases}
         \partial_t f^\nu(t,q)-\int_0^1\xi\bigl(\partial_q f^\nu(t,q)(s)\bigr)\dd s=0 \\
         f^\nu(0,q)=\varphi^\nu(q).
    \end{cases}
\end{equation}
The precise meaning of this equation is explained in Section~\ref{sec:constrained-HJ}. In particular, the notion of solution that we use is the notion of viscosity solution; see Definition~\ref{def:viscosity} and Definition~\ref{def:spin-glass-solution}.

Denote by $\Ent(\cdot\mid\mu):\cP(\supp\mu)\to[0,\infty]$ the relative entropy
\begin{equation}
    \label{eq:relative-entropy}
 \Ent(\nu\mid\mu):=\begin{cases}
 \displaystyle\int_{\supp\mu}\log\!\left(\frac{\mathrm d\nu}{\mathrm d\mu}(a)\right)\nu(\mathrm da),&\nu\ll\mu,\\
 +\infty,&\nu\not\ll\mu.
 \end{cases}
\end{equation}

\begin{theorem}\label{thm:main}
For every $t\in\R_+$ and $q\in\cQ_2$, we have
\begin{equation}\label{eq:main}
 \lim_{N\to\infty}\bar F_{N,\mu^{\otimes N}}(t,q)=\min_{\nu\in\cP(\supp\mu)}\left\{\Ent(\nu\mid\mu)+f^\nu(t,q)\right\}.
\end{equation}
The minimum is attained. For every $T,Q\in\R_+$, the convergence in \eqref{eq:main} is uniform over $t\in[0,T]$ and $q\in\cQ_\infty$ with $|q|_{L^\infty}\le Q$.
\end{theorem}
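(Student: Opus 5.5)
The plan is to decompose the configuration space according to the empirical measure of the spins and then recombine by a Laplace principle. For $\nu\in\cP(\supp\mu)$ and $\varepsilon>0$, let $\lambda_N^{\nu,\varepsilon}$ be the restriction of $\mu^{\otimes N}$ to configurations whose empirical measure $\frac1N\sum_i\delta_{\sigma_{\cdot,i}}$ lies in a small weak ball $B_\varepsilon(\nu)$. The constrained free energy $\bar F_{N,\lambda_N^{\nu,\varepsilon}}(t,q)$ is defined in Section~\ref{sec:enriched}, since these are nonzero finite measures on $\B^N$. Cover the compact space $\cP(\supp\mu)$ by finitely many such balls. The free energy is minus a log of a sum of partition functions, so
\begin{equation*}
\bar F_{N,\mu^{\otimes N}}(t,q)=\min_j \bar F_{N,\lambda_N^{\nu_j,\varepsilon}}(t,q)+O\bigl(\tfrac{\log(\#\text{balls})}{N}\bigr).
\end{equation*}
Here I also need Gaussian concentration, so that the expectation of the log of a sum is comparable to the minimum of the expectations. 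The theorem then reduces to identifying $\lim_N \bar F_{N,\lambda_N^{\nu,\varepsilon}}$ up to errors that vanish as $\varepsilon\to0$, and showing that this limit is $\Ent(\nu\mid\mu)+f^\nu(t,q)$.

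\emph{Step 1 (initial condition).} At $t=0$ the enriched free energy decouples through the $q$-cascade, but the constraint couples the sites. I would handle this by a Sanov/Gärtner--Ellis tilting argument: insert $e^{\sum_i h(\sigma_{\cdot,i})}$ and optimize over $h\in\bb(\supp\mu)$. This should give
\begin{equation*}
\lim_{\varepsilon\to0}\lim_N \bar F_{N,\lambda^{\nu,\varepsilon}_N}(0,q)=\Ent(\nu\mid\mu)+\varphi^\nu(q),
\end{equation*}
up to normalization. The identification uses the Legendre duality built into the definition of $\varphi^\nu$ in \eqref{eq:initial-definition}. The entropy piece comes from the mass $\mu^{\otimes N}(B_\varepsilon(\nu))\approx e^{-N\Ent(\nu\mid\mu)}$, after renormalizing to a probability measure. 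This step also needs the upper bound; a minimax over $h$ together with the convexity and continuity of $h\mapsto\psi_{\nu,h}$ should supply it.

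\emph{Step 2 (approximate HJ equation).} This is the heart of the argument and the main obstacle. Using Gaussian integration by parts, $\partial_t\bar F_N-\int_0^1\xi(\partial_q\bar F_N)$ equals an expectation of a Jensen-type gap: the difference between $\E\langle\xi(\overlap_N)\rangle$ and $\xi$ of the averaged overlap. Without convexity, this gap does not have a sign, and it need not vanish. I would use the constraint to control the self-overlap, because $\overlap_N(\sigma,\sigma)=\int aa^\intercal\,\dd(\text{empirical measure})$ is nearly fixed by $\nu$. I would then add a small perturbation to obtain Ghirlanda--Guerra identities and overlap synchronization in the cascade. This should show that $\bar F_{N,\lambda^{\nu,\varepsilon}_N}$ is an approximate viscosity sub- and supersolution, with an error that is uniformly bounded and vanishes as $N\to\infty$ and then $\varepsilon\to0$. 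If synchronization only gives one direction, I would first try to get the other from an interpolation upper bound, in the spirit of Guerra's interpolation.

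\emph{Step 3 (conclusion).} The constrained free energies are Lipschitz in $(t,q)$ uniformly in $N$, in $L^1$ for $q$ and for $|q|_{L^\infty}\le Q$, so they are precompact. By Step 2, subsequential limits are viscosity solutions with initial data $\Ent(\nu\mid\mu)+\varphi^\nu$. The comparison principle for \eqref{eq:HJ-inf-dim}, in the sense of Definitions~\ref{def:viscosity} and~\ref{def:spin-glass-solution}, then identifies each limit as $\Ent(\nu\mid\mu)+f^\nu$. Here I use that adding a constant commutes with the equation. Equicontinuity of the $L^\infty$-bounded family, combined with the finite covering, gives convergence uniform in $(t,q)$ and yields \eqref{eq:main}.

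For attainment of the minimum: $\Ent(\cdot\mid\mu)$ is lower semicontinuous on a compact set, and $\nu\mapsto f^\nu(t,q)$ is continuous. The latter follows from continuity of $\nu\mapsto\varphi^\nu$ in sup norm, which holds because $\psi_{\nu,h}$ is Lipschitz in $\nu$ uniformly in bounded $h$, together with the comparison principle.
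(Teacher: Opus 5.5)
Your Step~2 carries the entire difficulty of the theorem, and it does not go through: the mechanism you propose is the standard one, and it gives only averaged control. Ghirlanda--Guerra identities produced by a small perturbation hold after averaging over the perturbation parameters. So they make the residual $\E\langle\xi(\overlap_N(\sigma,\sigma'))\rangle-\int_0^1\xi(\partial_q\bar F_N(t,q,s))\dd s$ small only on average in $(t,q)$, or at points of differentiability of a subsequential limit. This is precisely the obstruction that limited the earlier Hamilton--Jacobi approach to partial information (one-sided bounds, or the critical-point characterization of \cite{chen2025free}). It does not give viscosity sub- and supersolution inequalities at arbitrary contact points.

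Synchronization does not close the gap. It says that the limiting matrix overlap is a function of its trace, whereas the residual vanishes only if $\overlap_N(\sigma,\sigma')$ concentrates \emph{given the cascade overlap} $\alpha\wedge\alpha'$. In the representation of Lemma~\ref{lem:diffusion-derivatives}, what is needed is $\int_0^1\E|\overlap_s-\E\overlap_s|^2\dd s\to0$. Your fallback fails too: a Guerra-type interpolation gives a one-sided bound only when $\xi$ is convex, which is exactly the hypothesis dropped here.

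Separately, your argument for the continuity of $\nu\mapsto\varphi^\nu$ does not work as stated. The supremum in~\eqref{eq:initial-definition} runs over all bounded Borel $h$, and neither $\psi_{\nu,h}$ nor $\int h\,\mathrm d\nu$ is $W_1$-continuous in $\nu$ uniformly over such $h$. The paper gets this continuity from finite-$N$ couplings (Lemmas~\ref{lem:type-continuity} and~\ref{lem:finite-initial-W1}, Proposition~\ref{prop:general-initial}).

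The missing idea is a concentration estimate for this conditional overlap that is uniform in $(t,q)$, and it relies on an exact type constraint rather than a weak ball. The paper restricts to $\Sigma_N(\nu)$ for finitely supported $\mu$, represents $\bar F_N^\nu$ through the Parisi PDE and its diffusion, and proves Lemma~\ref{lem:overlap-concentration} whenever $\dot q\succeq\varepsilon I_D$. Thermal fluctuations are controlled by $\int_0^1\E|\nabla_{\bx}^2\Phi|^2\dd s\le N/(2\varepsilon)$. Fluctuations of the conditional mean are reduced to a time average $U_s$ with two properties. Its conditional expectation given $\sigma_1$ is constant, because $\mathfrak S_N$ acts transitively on $\Sigma_N(\nu)$. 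Its conditional variance given $\sigma_1$ is bounded by Brascamp--Lieb, since the conditional law of the Gaussian inputs given $\sigma_1$ is log-concave by the joint convexity of $\Phi$ in $(\bx,\bg)$.

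This gives the residual bound $C(N\varepsilon)^{-1/3}$ of Proposition~\ref{prop:residual} uniformly in $(t,q)$. That bound is then fed into a quantitative finite-dimensional comparison principle, and general $\mu$ is recovered at the end by discretization and Lemma~\ref{lem:coupling}. A weak ball contains many permutation orbits, so transitivity fails, and this argument would not apply to your constrained measures without further work. Note also that the cascade $\cR$ is not Gaussian, so Gaussian concentration alone does not justify your decomposition into balls; the paper handles this with the Parisi-PDE softmax comparison of Lemma~\ref{lem:type-minimum}.
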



\subsection*{Background and related works}

The Sherrington--Kirkpatrick model corresponds to choosing $D = 1$, taking $\mu$ to be the uniform measure on $\pm 1$ and setting $\xi(x) = x^2$. The Parisi formula was derived in the physics literature in the context of the Sherrington--Kirkpatrick model using the non-rigorous replica method~\cite{parisi79,parisi80}. This formula expresses the free energy of the model as the maximum value of a functional defined on $\mcl P(\R_+)$. A rigorous proof of the lower bound in the Parisi formula is given in~\cite{gue03} using a technique now known as Guerra's interpolation. A rigorous proof of the upper bound was then obtained in~\cite{Tpaper}, completing the proof of the Parisi formula. 

In~\cite{pan.aom} it is proven that the limiting Gibbs measure admits some structural ultrametric property. This made it possible to simplify the previous proof of the Parisi formula and to generalize it to the models we study in the present paper with the additional assumption that $\xi$ is convex~\cite{pan,pan.vec}. The $D>1$ case also brought to light the synchronization mechanism, first discovered in multi-species mean-field spin glasses \cite{pan.multi} as introduced in~\cite{barcon}.


Related results have also been obtained in the context of models with spherical spins both in the one-species case~\cite{tal.sph} and the multi-species case~\cite{bates2022free}.

Finding expressions for the limit free energy beyond the cases where $\xi$ is convex is a difficult task. Some progress has been made in the context of balanced multi-species spin glasses~\cite{bates2023parisi,bates2025balanced} as in some cases the free energy of the full system reduces to the free energy of a system with $D=1$ and $\xi$ convex. However, no Parisi type formula is known in general for models with nonconvex $\xi$ and some of those formulas are even provably false.

The Hamilton--Jacobi approach to spin glasses was introduced as an alternative method to compute and characterize the limit free energy. In essence, this approach consists in connecting the free energy with the solution of a Hamilton--Jacobi PDE. It was first developed in the high-temperature regime in \cite{genovese2009mechanical,barra1,abarra,barra2}, in which case the corresponding partial differential equation is finite-dimensional. Later this method was revisited in~\cite{mourrat2020extending,mourrat2021nonconvex,mourrat2023free,HJbook,chen2022hamilton} to include the whole temperature range with the aim of finding a suitable limiting object to describe the free energy beyond the convex setting. What emerged from this line of work is the following conjecture.
\begin{conjecture}[\cite{mourrat2022parisi}] \label{conj:}
    Let $\psi(q) = \bar F_{1,\mu}(0,q)$. For every $t \geq 0$ we have
    \begin{equation}
        \lim_{N \to +\infty} \bar F_{N,\mu^{\otimes N}}(t,0) = f(t,0),
    \end{equation}
    where $f : \R_+ \times \mcl Q_2 \to \R$ is the solution of 
    \begin{equation} \label{e.hj}
        \begin{cases}
            \partial_t f(t,q) - \int_0^1 \xi(\partial_q f(t,q)(s))\dd s = 0 \\
            f(0,q) = \psi(q).
        \end{cases}
    \end{equation}
\end{conjecture}
An important result toward proving this conjecture is obtained in \cite{chen2025free}. The main result of \cite{chen2025free} is that if the free energy converges, then its limiting value is given by a certain functional $\mathscr J_{t,q}$ evaluated at a critical point. The functional $\mathscr J_{t,q}$ is strongly related to the Hamilton--Jacobi equation of the conjecture. This result has two important caveats that we will explain in the following. 

The first caveat is that the functional $\mathscr J_{t,q}$ in general has several critical points, so the main result of \cite{chen2025free} does not fully characterize the limiting value of the free energy.

The second caveat is that \cite{chen2025free} did not prove the convergence of the free energy. However, this missing convergence was provided  in~\cite{ho2026convergence}. Several key ingredients in the proof of this convergence are also used in the present paper.

The first key ingredient is to represent the free energy in terms of the solution of the so-called Parisi PDE. The Parisi PDE and the associated diffusion have a substantial history in scalar spin glasses ($D=1$). In particular, this PDE yields a stochastic-control representation which was used to establish the strict convexity of the Parisi functional~\cite{auffinger2015properties,auffinger2015parisi}; related dynamic-programming and stochastic-control approaches appear in~\cite{jagtob,chen2017variational}. A treatment of the Parisi PDE for multidimensional spins is given in~\cite[Section 6]{bovklim}. For vector spins, as considered here, the corresponding Parisi PDE and its diffusion and Hessian identities are developed in~\cite[Lemma 3.3]{chen2023parisi} and~\cite[Lemma 5.3]{chen2024simultaneous}. 

The second key ingredient is a log-concavity that was recently revealed in \cite{AronowLopatto2026}. This allows us to control fluctuations under a conditional log-concave law of Gaussian disorder and diffusion coordinates rather than directly under a Gibbs measure of the spins. 

Other ingredients include: the realization of the centered Gaussian Hamiltonian as $H_N(\sigma;\mathbf g)=h(\sigma)\cdot \mathbf g$ where $\mathbf g$ is a Gaussian vector with independent coordinates and $h(\sigma)$ is deterministic. This is an instance of covariance factorization; a related Hilbert-space representation for mixed $p$-spin models appears in~\cite[Section 1]{eldan2020chaos}. The Brascamp--Lieb variance inequality~\cite{brascampLieb1976}, used previously for long-range spin glasses~\cite{frohlichZegarlinski1989}, concentration of multioverlaps under log-concave Gibbs measures~\cite[Theorem 1]{barbierPanchenkoSaenz2022}, and variational bounds for perceptron models~\cite[Section 3.2 and Appendix B]{camilli2026perceptron} are also used.

Recently, a special case of Conjecture~\ref{conj:} was established in~\cite{chen2026ising} for multi-species models when the reference measure $\mu$ is the uniform measure on $\pm 1$. Those assumptions guarantee that the function $\psi$ is convex, which is the key ingredient to verifying  Conjecture~\ref{conj:} in that case. The corresponding result has also been established for  multi-species spherical spin glasses~\cite{chen2026spherical} using the same technique. Note that in general, the function $\psi$ is not convex so this technique cannot be used to solve the full conjecture. 

Theorem~\ref{thm:main} gives an unconditional variational formula for arbitrary $\mu$ through the constrained solutions $f^\nu$. Conjecture~\ref{conj:} asks whether this variational value at $q=0$ equals the single solution $f$ with initial condition $\psi$. That identification, though plausible, remains open in general. 


\subsection*{Proof strategy and organization} 
In Section~\ref{sec:enriched}, we give a precise definition of the enriched free energy $\bar F_{N,\lambda_N}(t,q)$ and verify its Lipschitz continuity with respect to $t,q$, and $\lambda_N$ (see Lemma~\ref{lem:regularity} and Lemma~\ref{lem:coupling}). 

For most of the manuscript, we assume that $\mu$ has finite support. Given a probability measure $\nu$ that is absolutely continuous with respect to $\mu$, we introduce $\lambda_N(\nu)$, which is the uniform measure on $\Sigma_N(\nu)$, the set of spin configurations $\sigma$ whose empirical measure is equal to $\nu$.

In Section~\ref{sec:residual} we prove the key result of our proof strategy: Proposition~\ref{prop:residual}. In the traditional approach of \cite{mourrat2020free,mourrat2021nonconvex} one wishes to verify that $\bar F_{N,\mu^{\otimes N}}$ approximately satisfies the partial differential equation \eqref{e.hj}. In this case, a major difficulty is that the corresponding error term is small but not \emph{uniformly} small in $(t,q)$. We verify in Lemma~\ref{lem:type-minimum} that as $N \to +\infty$ we have
\begin{equation} \label{eq:orga1}
    \bar F_{N,\mu^{\otimes N}} (t,q) = \min_{\nu} \left\{ \bar F_{N,\lambda_{N}(\nu)}(t,q) - \frac{1}{N} \log \mu^{\otimes N}(\Sigma_N(\nu)) \right\} + o(1).
\end{equation}
Proposition~\ref{prop:residual} then guarantees that, on the set of paths $q \in \mcl C^1([0,1],S^D_+)$ satisfying  $\dot q(s)\succeq\varepsilon I_D$, we have that $\bar F_{N,\lambda_N(\nu)}$ satisfies, up to an error term, the partial differential equation 
\begin{equation} \label{eq:orga2}
    \partial_t f^\nu(t,q) - \int_0^1 \xi(\partial_q f^\nu(t,q)(s))\dd s = 0.
\end{equation}
What is remarkable is that this error term goes to $0$ \emph{uniformly} in $(t,q)$ as $N \to +\infty$. This uniformity is proven using a new concentration result for a conditional version of the overlap (see Lemma~\ref{lem:diffusion-derivatives} and Lemma~\ref{lem:overlap-concentration}). The main result, Theorem~\ref{thm:main}, then directly follows (up to a few technicalities) from \eqref{eq:orga1}, Proposition~\ref{prop:residual}, and the fact that 
\begin{equation}
  \lim_{N \to +\infty}  \bar F_{N,\lambda_N(\nu)}(0,q) = \varphi^\nu(q),
\end{equation}
proven in Section~\ref{sec:finite-initial} (see Proposition~\ref{prop:finite-initial}). The few technicalities in question are addressed in Section~\ref{sec:constrained-HJ} and involve understanding how solutions of \eqref{eq:orga2} can be approximated by solutions of finite-dimensional partial differential equations. 

Finally, in Section~\ref{sec:ext-unit-ball} we remove the finiteness assumption on the support of $\mu$ by approximation. This uses the continuity of the free energy with respect to $\lambda_N$ proven in Section~\ref{sec:enriched}.

\subsection*{Acknowledgments} 
All three authors are deeply indebted to Jean-Christophe Mourrat for numerous insightful discussions on the Hamilton--Jacobi program and its ramifications. 
Moreover, FHH would like to thank Mourrat for his openness in sharing his ideas on multiple occasions, including the CRM--PIMS Summer School 2021 and the St. Flour Summer School 2024, from which FHH has benefited.

HBC acknowledges funding from the NYU Shanghai Start-Up Fund and support from the NYU--ECNU Institute of Mathematical Sciences at NYU Shanghai. FHH gratefully acknowledges support from Eliran Subag's ISF grant (No. 2055/21) and ERC grant (No. 101165541, Horizon Europe). VI acknowledges support from the FSMP postdoctoral program.

\subsection*{AI usage statement}

During the preparation of this manuscript, the authors used large language models to support the development of certain ideas and improve certain sections of text. All AI-generated inputs were critically reviewed, revised, and validated by the authors to ensure their accuracy, relevance, and alignment with the objectives of the study. 

\section{Enriched models and basic properties}\label{sec:enriched}

\subsection{Enriched free energies}
We now define the enrichment used throughout the paper. For $q\in\cQ_\infty$, let $q(1):=\lim_{s\uparrow1}q(s)$. Let $\cR$ be the continuous \textbf{Ruelle probability cascade} of~\cite[Section 4]{chen2025free}, defined on an infinite-dimensional separable Hilbert space with uniform overlap distribution. Its support $\supp\cR$ is contained in the unit sphere for almost every realization of $\cR$. If $\alpha$ and $\alpha'$ are independent samples from $\cR$, write $\alpha\wedge\alpha'$ for their Hilbert inner product. Under $\E\cR^{\otimes2}$, the random variable $\alpha\wedge\alpha'$ is uniformly distributed on $[0,1]$. Here and henceforth, we enlarge the probability space to include the independent random objects $H_N$ and $\cR$.
Conditionally on $\cR$, by further enlarging the probability space, let $W_N^q:\supp\cR\to\R^{D\times N}$ be a centered Gaussian process whose columns are independent and satisfy
\begin{equation*}
 \E\left[W^q_{N,\cdot,i}(\alpha)W^q_{N,\cdot,j}(\alpha')^\intercal\mid\cR\right]=\1_{\{i=j\}}q(\alpha\wedge\alpha'),\qquad i,j\in\{1,\dots,N\}.
\end{equation*}
The field $H_N$ is independent of $\cR$, and conditionally on $\cR$, the Gaussian processes $H_N$ and $W_N^q$ are independent. For $t\in\R_+$ and $q\in\cQ_\infty$, define $H_N^{t,q}:\B^N\times\supp\cR\to\R$ by
\begin{equation*}
 H_N^{t,q}(\sigma,\alpha):=\sqrt{2t}\,H_N(\sigma)-tN\xi\bigl(\overlap_N(\sigma,\sigma)\bigr)+\sqrt2\,W_N^q(\alpha)\cdot\sigma-q(1)\cdot\sigma\sigma^\intercal.
\end{equation*}
For a nonzero finite measure $\lambda_N$ on $\B^N$, define $\bar F_{N,\lambda_N}:\R_+\times\cQ_\infty\to\R$ by
\begin{equation}\label{eq:free-energy-general}
 \bar F_{N,\lambda_N}(t,q):=-\frac1N\E\log\iint\exp\bigl(H_N^{t,q}(\sigma,\alpha)\bigr)\lambda_N(\mathrm d\sigma)\cR(\mathrm d\alpha).
\end{equation}
For fixed disorder and cascade, write $\langle\cdot\rangle$ for expectation under the probability measure proportional to $\exp(H_N^{t,q}(\sigma,\alpha))\lambda_N(\mathrm d\sigma)\cR(\mathrm d\alpha)$. Primed or numbered replicas are conditionally independent samples from this measure. Lemma~\ref{lem:regularity} shows that~\eqref{eq:free-energy-general} extends uniquely from $\cQ_\infty$ to $\cQ_1$ by $L^1$ continuity, and we use the same notation for this extension. For a step path $q$, the same enrichment can equivalently be constructed from the finite-level Ruelle probability cascade associated with the jump points of $q$; see~\cite[Section 4]{chen2025free}. The continuous cascade formulation above is convenient because it applies directly to every bounded path.

We equip $L^2([0,1];S^D)$ with the inner product $\ip p q_{L^2}:=\int_0^1p(s)\cdot q(s)\dd s$. The dual cone of $\cQ_2$ is
\begin{equation}\label{eq:dual-cone}
 \cQ_2^*:=\left\{r\in L^2([0,1];S^D):\ip r q_{L^2}\ge0\ \text{for every }q\in\cQ_2\right\}.
\end{equation}

\subsection{Gaussian comparisons and regularity}
We first specify the notion of differentiability used below. Let $E$ be a Hilbert space, let $G\subset E$, and let $g:G\to\R$. Following~\cite[Definition 2.1]{chen2025free}, we say that $g$ is \textbf{Fr\'echet differentiable relative to $G$} at $x\in G$ if there exists a unique $y\in E$ such that
\begin{equation*}
 \lim_{\substack{x'\in G,\ x'\to x\\x'\ne x}}
 \frac{|g(x')-g(x)-\ip{y}{x'-x}_E|}{|x'-x|_E}=0.
\end{equation*}
We call $y$ the derivative of $g$ at $x$. For $G=\R_+\times\cQ_2$, the ambient Hilbert space is $E:=\R\times L^2([0,1];S^D)$. We write the two components of the derivative as $\partial_t g$ and $\partial_q g$.
For a subset $G\subset L^2([0,1];S^D)$, we say that a function $g:G\to\R$ is \textbf{$\cQ_2^*$-increasing} if
\begin{equation*}
 q-q'\in\cQ_2^*\quad\Longrightarrow\quad g(q)\ge g(q'),
 \qquad q,q'\in G;
\end{equation*}
cf.~\cite[Equation (3.19)]{chen2025free}.
The following lemma can be extracted from \cite[Propositions 3.1, 3.6, and 5.1]{chen2025free}. 
We adopt the following abbreviations for the derivatives of $\xi$:
\begin{equation}\label{e.M_i=}
 M_0:=\sup_{a\in \R^{D\times D}:\:|a|\le1}|\xi(a)|,
 \quad
 M_1:=\sup_{a\in \R^{D\times D}:\:|a|\le1}|\nabla\xi(a)|,
 \quad \text{and}\quad
M_2:=\sup_{a\in\R^{D\times D}:\:|a|\le1}\norm{\nabla^2\xi(a)}_{\mathrm{op}}.
\end{equation}
\begin{lemma}\label{lem:regularity}
For every nonzero finite measure $\lambda_N$ supported on $\B^N$ and all $(t,q),(t',q')\in\R_+\times\cQ_1$, we have
\begin{equation}\label{eq:regularity}
 |\bar F_{N,\lambda_N}(t,q)-\bar F_{N,\lambda_N}(t',q')|
 \le M_0|t-t'|+|q-q'|_{L^1}.
\end{equation}
The restriction of $\bar F_{N,\lambda_N}$ to $\R_+\times\cQ_2$ is Fr\'echet differentiable relative to its domain. 
The derivative $\partial_q\bar F_{N,\lambda_N}(t,q)$ belongs to $\mcl Q_\infty$ viewed as a path
\begin{align}\label{e.path_view_of_dqF}
    [0,1)\ni s \quad\mapsto \quad \partial_q\bar F_{N,\lambda_N}(t,q,s) \in S^D_+
\end{align}
and satisfies $\Ll|\partial_q\bar F_{N,\lambda_N}(t,q)\Rr|_{L^\infty}\leq 1$.
For every $t\in\R_+$, $q\in\cQ_\infty$, and $\kappa\in L^2([0,1];S^D)$, we have
\begin{equation}\label{eq:replica-derivatives}
  \partial_t\bar F_{N,\lambda_N}(t,q)
  =\E\langle\xi(\overlap_N(\sigma,\sigma'))\rangle \quad\text{and}\quad
  \ip\kappa{\partial_q\bar F_{N,\lambda_N}(t,q)}_{L^2}
  =\E\langle\kappa(\alpha\wedge\alpha')\cdot \overlap_N(\sigma,\sigma')\rangle.
\end{equation}
For fixed $N\in\N$ and $t\in\R_+$, we have
\begin{equation}\label{eq:gradient-continuity}
 |\partial_q\bar F_{N,\lambda_N}(t,q)-\partial_q\bar F_{N,\lambda_N}(t,q')|_{L^1}
 \le16N|q-q'|_{L^1},
 \qquad q,q'\in\cQ_2.
\end{equation}
For fixed $N\in\N$ and $t\in\R_+$, the function $q\mapsto\bar F_{N,\lambda_N}(t,q)$ is $\cQ_2^*$-increasing on $\cQ_2$.
\end{lemma}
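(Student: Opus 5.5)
We work first with $q\in\cQ_\infty$, where the enriched free energy is defined directly, and obtain the general statements by density and $L^1$-continuity. Throughout we use Gaussian integration by parts, which is the route of \cite[Propositions 3.1, 3.6, 5.1]{chen2025free}. Let $Z(t,q)$ denote the integral inside the logarithm in~\eqref{eq:free-energy-general}. Then
\begin{equation*}
 \partial_t\bar F_{N,\lambda_N}=-\frac1N\E\Big\langle \tfrac{1}{\sqrt{2t}}H_N(\sigma)-N\xi(\overlap_N(\sigma,\sigma))\Big\rangle .
\end{equation*}
Integration by parts with covariance~\eqref{eq:covariance} turns $\E\langle H_N/\sqrt{2t}\rangle$ into $N\E\langle\xi(\overlap_N(\sigma,\sigma))-\xi(\overlap_N(\sigma,\sigma'))\rangle$. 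The self-overlap correction cancels the diagonal term, which gives the first identity in~\eqref{eq:replica-derivatives}.

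For the $q$-derivative, we move along $q_\varepsilon=q+\varepsilon\kappa$ with $\kappa$ bounded and $q_\varepsilon$ still increasing. We realize $W^{q_\varepsilon}$ as $W^q$ plus an independent field, or we differentiate the covariance directly. Integration by parts conditional on $\cR$ then gives
\begin{equation*}
 \E\big\langle\kappa(1)\cdot\sigma\sigma^\intercal/N-\kappa(\alpha\wedge\alpha')\cdot\overlap_N(\sigma,\sigma')\big\rangle-\kappa(1)\cdot\E\langle\sigma\sigma^\intercal/N\rangle ,
\end{equation*}
up to the overall sign convention. The diagonal terms cancel, using $\alpha\wedge\alpha=1$ on $\supp\cR$. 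Since $\alpha\wedge\alpha'$ is uniform under $\E\cR^{\otimes2}$, and more precisely under the Gibbs-weighted law thanks to the invariance properties of the cascade (cf. \cite[Section 4]{chen2025free}), this becomes $\int_0^1\kappa(s)\cdot \partial_q\bar F(t,q,s)\dd s$. Here
\begin{equation*}
 \partial_q\bar F(t,q,s)=\E\langle\overlap_N(\sigma,\sigma')\mid\alpha\wedge\alpha'=s\rangle .
\end{equation*}
This is the second identity in~\eqref{eq:replica-derivatives}.

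The bounds follow from these formulas. Since $|\sigma_{\cdot,i}|\le1$, we have $|\overlap_N|\le1$ and $|\xi(\overlap_N)|\le M_0$, so $|\partial_t\bar F|\le M_0$ and $|\partial_q\bar F|_{L^\infty}\le1$. Integrating along the segment between $(t,q)$ and $(t',q')$, which stays in the convex set $\cQ_\infty$, yields~\eqref{eq:regularity}. Because $\cQ_\infty$ is $L^1$-dense in $\cQ_1$, this gives the unique Lipschitz extension.

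That the derivative is a path in $S^D_+$ which is increasing in $s$ comes from the structure of the Ruelle cascade. Conditionally on $\alpha\wedge\alpha'=s$, Ghirlanda--Guerra type / Gaussian-cascade invariance allows the overlap to be expressed as $\E\langle\cdot\rangle$ of a product $\bar u(s)\bar u(s)^\intercal$ built from averages at level $s$. This is the Parisi-PDE representation, in which $\partial_q\bar F(s)=\frac1N\sum_i\E[\nabla u_s\nabla u_s^\intercal]$ along the cascade diffusion. This representation is PSD and monotone in $s$ by the martingale property (Jensen). For step paths it is exactly the finite-level cascade computation of~\cite{chen2025free}, and we pass to general $q$ by approximation.

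The $\cQ_2^*$-monotonicity then follows. For $q-q'\in\cQ_2^*$, integrate the derivative along the segment: since the derivative lies in $\cQ_\infty\subset\cQ_2$, we get $\bar F(q)-\bar F(q')=\int_0^1\ip{q-q'}{\partial_q\bar F(q'+\theta(q-q'))}\dd\theta\ge0$ by the definition of the dual cone.

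Fréchet differentiability relative to $\R_+\times\cQ_2$, and~\eqref{eq:gradient-continuity}, come from a second-order bound. A further integration by parts expresses $\ip{\kappa}{\partial_q\bar F(q)-\partial_q\bar F(q')}$ as an integral of four-replica overlap terms weighted by $\kappa(\alpha^a\wedge\alpha^b)$ and $(q-q')(\alpha^c\wedge\alpha^d)$. Each overlap is bounded by $1$ but carries a factor $N$ from the second derivative of $\log Z$, and counting the terms gives the constant $16N$, with an $L^1$ norm in $q-q'$ and an $L^\infty$ norm in $\kappa$. This Lipschitz continuity of the gradient gives Fréchet differentiability with remainder $O(N|q-q'|^2)$ on $\cQ_\infty$, and by density on $\cQ_2$.

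The main obstacle is the identification of $\partial_q\bar F$ as an increasing $S^D_+$-valued path, together with the careful $L^1$-versus-$L^\infty$ bookkeeping in~\eqref{eq:gradient-continuity}. For both I would rely on the finite-cascade computations in \cite{chen2025free} and pass to limits.
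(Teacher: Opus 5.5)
Your proposal is correct and follows essentially the paper's route. The paper gives no independent proof and extracts the lemma from \cite[Propositions 3.1, 3.6, and 5.1]{chen2025free}, whose content is what you sketch: Gaussian integration by parts, cascade invariance giving a uniform law of $\alpha\wedge\alpha'$ under $\E\langle\cdot\rangle$, and the martingale/Parisi-PDE representation of the conditional overlap. Your count $\tfrac12(2+8+6)\cdot 2N=16N$ for the off-diagonal four-replica terms is right, since the diagonal terms cancel against the compensator $-q(1)\cdot\sigma\sigma^\intercal$. One small correction: the $L^1$ gradient bound alone only gives a Fr\'echet remainder of order $\sqrt N\,|q'-q|_{L^2}^{3/2}$, via $|p|_{L^2}^2\le|p|_{L^1}|p|_{L^\infty}$; the $O(N|q'-q|_{L^2}^2)$ you state needs the same four-replica bound run with Cauchy--Schwarz to get an $L^2$-Lipschitz gradient, though either version suffices.
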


\begin{lemma}\label{lem:initial-regularity}
For every $\nu\in\cP(\supp\mu)$, we have $\varphi^\nu(0)=0$ and
\begin{equation}\label{eq:initial-regularity}
 0\le\varphi^\nu(q)\le|q|_{L^1},
 \qquad
 |\varphi^\nu(q)-\varphi^\nu(q')|\le|q-q'|_{L^1},
 \qquad q,q'\in\cQ_1.
\end{equation}
Its restriction to $\cQ_2$ is $\cQ_2^*$-increasing.
\end{lemma}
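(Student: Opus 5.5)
The plan is to read off every property of $\varphi^\nu$ from the corresponding property of $\psi_{\nu,h}$ given by Lemma~\ref{lem:regularity} with $N=1$, $t=0$, and then to pass through the supremum over $h$. The constant term $\int h\,\dd\nu$ does not depend on $q$, so properties that survive a supremum (Lipschitz bounds, monotonicity) transfer directly. Only the value at $q=0$ requires a separate computation.

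\emph{Value at $q=0$.} At $t=0$ and $q=0$ we have $W_1^0=0$ and $q(1)=0$, so $H^{0,0}_1=0$. Since $\cR$ is a probability measure, this gives
\begin{equation*}
\psi_{\nu,h}(0)=-\log\int e^{h}\dd\nu .
\end{equation*}
Hence
\begin{equation*}
\varphi^\nu(0)=\sup_{h\in\bb(\supp\mu)}\Bigl\{\int h\dd\nu-\log\int e^h\dd\nu\Bigr\}.
\end{equation*}
By Jensen's inequality, $\log\int e^h\dd\nu\ge\int h\dd\nu$, so every term in the supremum is at most $0$. The choice $h=0$ attains $0$. Therefore $\varphi^\nu(0)=0$. (Equivalently, this is the Donsker--Varadhan formula for $\Ent(\nu\mid\nu)=0$.) In particular the supremum is finite.

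\emph{Lipschitz bound.} Lemma~\ref{lem:regularity} applies to the finite nonzero measure $e^h\nu$ on $\B$. It gives $|\psi_{\nu,h}(q)-\psi_{\nu,h}(q')|\le|q-q'|_{L^1}$ for all $q,q'\in\cQ_1$, uniformly in $h$. Thus each map $q\mapsto\psi_{\nu,h}(q)+\int h\dd\nu$ is $1$-Lipschitz in $L^1$. A pointwise supremum of $1$-Lipschitz functions that is finite at one point (here $q=0$) is finite everywhere and $1$-Lipschitz. This yields the second inequality in~\eqref{eq:initial-regularity}, and combined with $\varphi^\nu(0)=0$ it gives $\varphi^\nu(q)\le|q|_{L^1}$.

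\emph{Nonnegativity and monotonicity.} The last statement of Lemma~\ref{lem:regularity} says each $\psi_{\nu,h}$ is $\cQ_2^*$-increasing on $\cQ_2$. If $q-q'\in\cQ_2^*$, then $\psi_{\nu,h}(q)+\int h\dd\nu\ge\psi_{\nu,h}(q')+\int h\dd\nu$ for every $h$, and taking suprema gives $\varphi^\nu(q)\ge\varphi^\nu(q')$.

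For $\varphi^\nu\ge0$ on $\cQ_2$, take $q'=0$: every $q\in\cQ_2$ satisfies $q-0\in\cQ_2^*$. This holds because for $p,q\in\cQ_2$ both paths are pointwise positive semidefinite, so $p(s)\cdot q(s)\ge0$ and hence $\ip{q}{p}_{L^2}\ge0$. Therefore $\varphi^\nu(q)\ge\varphi^\nu(0)=0$.

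To extend nonnegativity to $\cQ_1$, approximate $q\in\cQ_1$ in $L^1$ by the truncations $q\wedge n$, meaning the eigenvalues are clipped at $n$. These truncations are still increasing and lie in $\cQ_\infty\subset\cQ_2$. Nonnegativity then passes to the limit by the Lipschitz continuity already established.

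\emph{Main obstacle.} The one point needing care is the monotonicity of the clipping map: we need $a\preceq b$ to imply $\min(a,nI)\preceq\min(b,nI)$, so that the truncations remain increasing paths. Matrix clipping is not operator-monotone in general, so this could fail. If it does, I would instead use a scalar normalization, $q\mapsto q\,\1_{[0,1-1/n)}$. This is increasing on $[0,1-1/n)$ but drops to zero afterwards, so it is not monotone either. A better alternative is $q_n(s):=q(\min(s,1-1/n))$, which freezes the path at time $1-1/n$. It is increasing, bounded, lies in $\cQ_\infty$, and converges to $q$ in $L^1$ by dominated convergence, since $|q(1-1/n)|$ is controlled on the short tail interval by integrability and monotonicity.
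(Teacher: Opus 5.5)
Your proposal is correct and follows the same route as the paper. Both use Jensen's inequality at $q=0$, pass the $L^1$-Lipschitz bound and $\cQ_2^*$-monotonicity of each $\psi_{\nu,h}+\int h\,\mathrm d\nu$ through the supremum, use the inclusion $\cQ_2\subset\cQ_2^*$ for nonnegativity, and approximate by bounded paths to reach $\cQ_1$. Your suspicion about eigenvalue clipping is justified, since $x\mapsto\min\{x,n\}$ is not operator monotone. The frozen paths $q(\min\{s,1-1/n\})$ you settle on do work, and dominated convergence applies cleanly: $0\preceq q_n(s)\preceq q(s)$ gives $|q_n(s)|\le|q(s)|$, and $q_n(s)=q(s)$ for large $n$ at each fixed $s<1$.
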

\begin{proof}
At $q=0$, Jensen's inequality gives
\begin{equation*}
 \psi_{\nu,h}(0)+\int h\,\mathrm d\nu
 =-\log\int e^h\,\mathrm d\nu+\int h\,\mathrm d\nu\le0.
\end{equation*}
Equality holds for $h=0$, so $\varphi^\nu(0)=0$. By Lemma~\ref{lem:regularity}, each function $q\mapsto\psi_{\nu,h}(q)+\int h\,\mathrm d\nu$ is $1$-Lipschitz in $L^1$ and $\cQ_2^*$-increasing on $\cQ_2$. Taking the supremum in~\eqref{eq:initial-definition} preserves these properties and gives $\varphi^\nu(q)\le|q|_{L^1}$. 
Since $q\in\cQ_2$ implies $q\in\cQ_2^*$, monotonicity gives $\varphi^\nu(q)\ge\varphi^\nu(0)=0$ there. 
Approximation by bounded paths extends this lower bound to $\cQ_1$.
\end{proof}

\begin{lemma}\label{lem:coupling}
Let $\lambda_N$ and $\lambda_N'$ be probability measures on $\B^N$. Suppose that they admit a coupling supported on pairs $(\sigma,\tau)$ satisfying $N^{-1}\sum_{i=1}^N|\sigma_{\cdot,i}-\tau_{\cdot,i}|\le a$. Then, for every $t\in\R_+$ and $q\in\cQ_\infty$, we have
\begin{equation}\label{eq:coupling-bound}
 |\bar F_{N,\lambda_N}(t,q)-\bar F_{N,\lambda_N'}(t,q)|
 \le2a\bigl(tM_1+|q|_{L^\infty}\bigr).
\end{equation}
\end{lemma}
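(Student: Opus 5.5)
We prove Lemma~\ref{lem:coupling} by Gaussian interpolation between the two measures along the coupling. Let $\pi$ be the coupling of $\lambda_N$ and $\lambda_N'$. Since $\pi$ has marginals $\lambda_N$ and $\lambda_N'$, we can write
\begin{equation*}
 \bar F_{N,\lambda_N}(t,q)=-\frac1N\E\log\iint\exp\bigl(H_N^{t,q}(\sigma,\alpha)\bigr)\pi(\mathrm d\sigma,\mathrm d\tau)\cR(\mathrm d\alpha),
\end{equation*}
and similarly for $\lambda_N'$ with $H_N^{t,q}(\tau,\alpha)$ in place of $H_N^{t,q}(\sigma,\alpha)$. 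For $u\in[0,1]$, we define the interpolating free energy
\begin{equation*}
 \phi(u):=-\frac1N\E\log\iint\exp\bigl(\sqrt{u}\,X(\sigma,\alpha)+\sqrt{1-u}\,Y(\tau,\alpha)-\tfrac{u}{2}\E X^2-\tfrac{1-u}{2}\E Y^2\bigr)\,\pi\,\cR,
\end{equation*}
where $X$ and $Y$ are the Gaussian parts $\sqrt{2t}H_N+\sqrt2W_N^q\cdot$ evaluated at $\sigma$ and $\tau$, using the \emph{same} realization of $(H_N,W^q_N)$. The subtracted terms $tN\xi(\overlap_N(\sigma,\sigma))+q(1)\cdot\sigma\sigma^\intercal$ are exactly $\frac12$ of the conditional variance. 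We then have $\phi(1)=\bar F_{N,\lambda_N}$ and $\phi(0)=\bar F_{N,\lambda_N'}$.

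Next we differentiate $\phi$ in $u$ and apply Gaussian integration by parts (conditionally on $\cR$). This gives the standard formula
\begin{equation*}
 \phi'(u)=\frac1{2N}\E\bigl\langle C_u\bigl((\sigma,\tau,\alpha),(\sigma',\tau',\alpha')\bigr)\bigr\rangle,
\end{equation*}
where $C_u$ is the derivative in $u$ of the covariance of the interpolating field at two replicas. Explicitly,
\begin{equation*}
 C_u=\E X X'-\E YY'+\tfrac{(1-2u)}{2\sqrt{u(1-u)}}\bigl(\E XY'+\E YX'\bigr).
\end{equation*}
The diagonal terms cancel by the variance correction. The cross term $\E XY'$ is problematic because it is singular at the endpoints. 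To avoid it, it is cleaner to take instead the field $Z_u(\sigma,\tau,\alpha):=X$-type field evaluated at the interpolated configuration $\rho_u:=u\sigma+(1-u)\tau\in\B^N$. The ball $\B$ is convex, so $\rho_u\in\B^N$. Then
\begin{equation*}
 \E Z_u Z_u'=2tN\xi\bigl(\overlap_N(\rho_u,\rho_u')\bigr)+2N\,q(\alpha\wedge\alpha')\cdot\overlap_N(\rho_u,\rho_u'),
\end{equation*}
and $\phi(u)$ is the enriched free energy of the pushforward of $\pi$ under $\rho_u$. Differentiating and using Gaussian integration by parts (equivalently, the $\lambda$-dependence of \eqref{eq:free-energy-general}), we get
\begin{equation*}
 \phi'(u)=-\frac1N\E\Bigl\langle \partial_u\bigl(H_N^{t,q}(\rho_u,\alpha)\bigr)\Bigr\rangle .
\end{equation*}
After integrating by parts, this becomes a difference between a one-replica term and a two-replica term. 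Each term involves $\partial_u$ of $t\xi(\overlap_N(\rho_u,\rho_u^{(\prime)}))+q(\cdot)\cdot\overlap_N(\rho_u,\rho_u^{(\prime)})$, which contributes $-t\frac{d}{du}\xi(\overlap_N(\rho_u,\rho_u))+\ldots$ and so on.

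The key estimate is the bound on these derivatives. We have $\partial_u\overlap_N(\rho_u,\rho_u')=N^{-1}\sum_i\bigl[(\sigma_i-\tau_i)\rho_{u,i}'^\intercal+\rho_{u,i}(\sigma'_i-\tau'_i)^\intercal\bigr]$. Since the columns of $\rho_u$ have norm at most $1$ and the coupling guarantees $N^{-1}\sum_i|\sigma_i-\tau_i|\le a$ almost surely, we get $|\partial_u\overlap_N|\le 2a$. Also $|\overlap_N|\le1$. Hence
\begin{equation*}
 \bigl|\partial_u\xi(\overlap_N)\bigr|\le 2aM_1 \quad\text{and}\quad \bigl|q(\alpha\wedge\alpha')\cdot\partial_u\overlap_N\bigr|\le 2a|q|_{L^\infty}.
\end{equation*}
The one-replica and two-replica terms come with coefficients that combine, thanks to the factor $\frac12$ in the variance correction, to give a total bound of $|\phi'(u)|\le 2a(tM_1+|q|_{L^\infty})$. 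Integrating over $u\in[0,1]$ then yields \eqref{eq:coupling-bound}.

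The main obstacle is the bookkeeping of constants in the integration by parts. One must check that the one-replica term (from the self-overlap corrections and $q(1)$) and the two-replica term each contribute at most $a(tM_1+|q|_{L^\infty})$, so that the total is $2a(\cdot)$ rather than $4a(\cdot)$. If the constants do not combine this way, I would instead bound the difference directly by a pathwise comparison on the fixed coupling. We would compare $\sup_{\pi}|H^{t,q}_N(\sigma,\alpha)-H^{t,q}_N(\tau,\alpha)|$ in expectation via the Gaussian interpolation of the two fields $X(\sigma)$ and $X(\tau)$. For this, we would use that the variance of $X(\sigma)-X(\tau)$ is controlled by $N\cdot 2a(tM_1+|q|_{L^\infty})$ through the mean value theorem on $\xi$.
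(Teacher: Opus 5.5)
The displacement interpolation you end up relying on is viable and genuinely different from the paper's route. However, the step you flag as the main obstacle is left undone, and the accounting you propose for it is wrong.

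\textbf{The abandoned interpolation.} Better bookkeeping cannot repair it. With the \emph{same} realization of $(H_N,W_N^q)$, take $a=0$, so that $X=Y$ on the coupling. Then your $\phi(u)$ is a free energy with Gaussian field $(\sqrt u+\sqrt{1-u})X$, which genuinely depends on $u$, so no bound on $\phi'$ of order $a$ can hold. The integrable singularity of $(1-2u)/\sqrt{u(1-u)}$ is not the real issue. The real issues are that the cross terms $\E[X(\sigma,\alpha)Y(\tau',\alpha')\mid\cR]$ are of order $N$ with no factor $a$, and that your correction omits the term $2\sqrt{u(1-u)}\,\E[XY\mid\cR]$ of the variance. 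The fix, which is exactly the paper's proof, is to take $X$ and $Y$ \emph{conditionally independent} Gaussian processes indexed by $(\sigma,\tau,\alpha)$, carrying the covariances of the Gaussian parts at $\sigma$ and at $\tau$. The interpolated field then has covariance $\theta\Cov_0+(1-\theta)\Cov_1$ and the diagonal correction is exact. Gaussian integration by parts gives $\phi'(\theta)=\frac1{2N}\E\langle(\Cov_0-\Cov_1)(x^1,x^2)\rangle_\theta$, and $|\overlap_N(\sigma,\sigma')-\overlap_N(\tau,\tau')|\le2a$ gives the constant $2a(tM_1+|q|_{L^\infty})$ immediately.

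\textbf{Your $\rho_u$ route.} Put $\mathcal G_u(x):=\sqrt{2t}\,H_N(\rho_u)+\sqrt2\,W_N^q(\alpha)\cdot\rho_u$, so the correction is $-\frac12\E[\mathcal G_u(x)^2\mid\cR]$. The one-replica term from integration by parts is $\E[\partial_u\mathcal G_u(x)\,\mathcal G_u(x)\mid\cR]=\frac12\partial_u\E[\mathcal G_u(x)^2\mid\cR]$. This cancels the derivative of the correction \emph{exactly}, so it contributes zero, not $a(tM_1+|q|_{L^\infty})$. Only the two-replica term survives:
\begin{equation*}
 \phi'(u)=\frac1N\E\Bigl\langle 2t\,\nabla\xi\bigl(\overlap_N(\rho_u^1,\rho_u^2)\bigr)\cdot(\sigma^1-\tau^1)(\rho_u^2)^\intercal+2q(\alpha^1\wedge\alpha^2)\cdot(\sigma^1-\tau^1)(\rho_u^2)^\intercal\Bigr\rangle .
\end{equation*}
On the support of the coupling $|(\sigma^1-\tau^1)(\rho_u^2)^\intercal|\le Na$, and $\overlap_N(\rho_u^1,\rho_u^2)$ lies in the unit ball. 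So this term alone gives $|\phi'(u)|\le2a(tM_1+|q|_{L^\infty})$: the two-replica term by itself saturates the constant, contrary to your ``each at most $a(\cdot)$'' check.

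With this computation supplied, your route is a valid alternative. It does require a version of $H_N$ with $C^1$ sample paths on $\B^N$, so that you can differentiate $H_N(\rho_u)$ and integrate by parts against $\nabla H_N(\rho_u)\cdot(\sigma-\tau)$. This holds for smooth $\xi$ but must be justified. The paper's argument avoids it: it uses only the covariance kernels at the endpoint configurations and never evaluates $H_N$ off the supports of $\lambda_N$ and $\lambda_N'$.

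\textbf{Your fallback.} It would not work. $X(\sigma)-X(\tau)$ has standard deviation of order $\sqrt{Na}$, so controlling its supremum over a typically exponentially large support yields, after dividing by $N$, a bound of order $\sqrt a$ rather than linear in $a$.
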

\begin{proof}
Let $\Pi_N$ be a coupling of $\lambda_N$ and $\lambda_N'$ satisfying the assumption of the lemma. We use $x=(\sigma,\tau,\alpha)$ and $x'=(\sigma',\tau',\alpha')$ for two points in $\B^N\times\B^N\times\supp\cR$. Conditionally on $\cR$, let $X_0$ and $X_1$ be independent centered Gaussian processes indexed by this space with covariance kernels
\begin{equation}\label{eq:coupling-covariances}
 \begin{aligned}
  \Cov_0(x,x')&:=2tN\xi\bigl(\overlap_N(\sigma,\sigma')\bigr)+2Nq(\alpha\wedge\alpha')\cdot \overlap_N(\sigma,\sigma'),\\
  \Cov_1(x,x')&:=2tN\xi\bigl(\overlap_N(\tau,\tau')\bigr)+2Nq(\alpha\wedge\alpha')\cdot \overlap_N(\tau,\tau').
 \end{aligned}
\end{equation}
These are the covariance kernels of the centered Gaussian parts of the enriched Hamiltonians evaluated respectively at $\sigma$ and $\tau$. In particular, we have
\begin{align*}
 \frac12\Cov_0(x,x)=tN\xi\bigl(\overlap_N(\sigma,\sigma)\bigr)+q(1)\cdot\sigma\sigma^\intercal \qquad\text{and}\qquad 
 \frac12\Cov_1(x,x)=tN\xi\bigl(\overlap_N(\tau,\tau)\bigr)+q(1)\cdot\tau\tau^\intercal.
\end{align*}
Thus the diagonal terms in the enriched Hamiltonian are exactly $-\Cov_0(x,x)/2$ and $-\Cov_1(x,x)/2$ at the two endpoints of the interpolation.

For $\theta\in[0,1]$, define $\widetilde X_\theta:=\sqrt\theta\,X_0+\sqrt{1-\theta}\,X_1$, $\widetilde{\Cov}_\theta:=\theta \Cov_0+(1-\theta)\Cov_1$, and define $\phi:[0,1]\to\R$ by
\begin{equation*}
 \phi(\theta):=-\frac1N\E\log\iint
 \exp\left(\widetilde X_\theta(x)-\frac12\widetilde{\Cov}_\theta(x,x)\right)
 \Pi_N(\mathrm d\sigma,\mathrm d\tau)\cR(\mathrm d\alpha).
\end{equation*}
Since the first and second marginals of $\Pi_N$ are $\lambda_N$ and $\lambda_N'$, respectively, the endpoint values are
\begin{equation*}
 \phi(1)=\bar F_{N,\lambda_N}(t,q)
 \quad\text{and}\quad
 \phi(0)=\bar F_{N,\lambda_N'}(t,q).
\end{equation*}
Let $\langle\cdot\rangle_\theta$ denote the Gibbs expectation associated with $\phi(\theta)$, and let $x^1,x^2$ be two replicas. Gaussian integration by parts, together with the derivative of the compensation $-\widetilde{\Cov}_\theta(x,x)/2$, gives
\begin{equation}\label{eq:coupling-interpolation-derivative}
 \phi'(\theta)=\frac1{2N}\E\left\langle
 (\Cov_0-\Cov_1)(x^1,x^2)
 \right\rangle_\theta.
\end{equation}

We next bound the covariance difference in~\eqref{eq:coupling-interpolation-derivative}. If $(\sigma,\tau)$ and $(\sigma',\tau')$ belong to the support of $\Pi_N$, then
\begin{align*}
 &|\overlap_N(\sigma,\sigma')-\overlap_N(\tau,\tau')|\le\frac1N\sum_{i=1}^N
 \left|\sigma_{\cdot,i}(\sigma'_{\cdot,i})^\intercal-\tau_{\cdot,i}(\tau'_{\cdot,i})^\intercal\right|\le\frac1N\sum_{i=1}^N
 \left(|\sigma_{\cdot,i}-\tau_{\cdot,i}|+|\sigma'_{\cdot,i}-\tau'_{\cdot,i}|\right)
 \le2a.
\end{align*}
Every overlap in this display belongs to the unit ball of $\R^{D\times D}$. Hence, the definition of $M_1$ gives
\begin{equation*}
 \left|\xi\bigl(\overlap_N(\sigma,\sigma')\bigr)-\xi\bigl(\overlap_N(\tau,\tau')\bigr)\right|
 \le2M_1a\quad\text{and}\quad \left|q(u)\cdot\bigl(\overlap_N(\sigma,\sigma')-\overlap_N(\tau,\tau')\bigr)\right|
 \le2a|q|_{L^\infty}.
\end{equation*}
Using~\eqref{eq:coupling-covariances}, we therefore obtain $|(\Cov_0-\Cov_1)(x,x')|
 \le4Na\bigl(tM_1+|q|_{L^\infty}\bigr)$.
Substituting this estimate into~\eqref{eq:coupling-interpolation-derivative} yields $|\phi'(\theta)|\le2a\bigl(tM_1+|q|_{L^\infty}\bigr)$ for every $\theta\in[0,1]$.
Integrating over $\theta\in[0,1]$ proves~\eqref{eq:coupling-bound}.
\end{proof}

Assume that $\mu$ has finite support and write $\supp\mu=\{a_1,\ldots,a_m\}$, with $a_1,\ldots,a_m$ distinct. For $\nu\in\cP(\supp\mu)$, write $\nu_j:=\nu(\{a_j\})$ for $j\in\{1,\ldots,m\}$. For $N\in\N$, define $\Delta_{m,N}\subset\cP(\supp\mu)$ and $\Sigma_N:\Delta_{m,N}\to2^{(\supp\mu)^N}$ by
\begin{align}
 \Delta_{m,N}&:=\left\{\nu\in\cP(\supp\mu):N\nu_j\in\{0,\ldots,N\},\ \forall j\in\{1,\ldots,m\}\right\},\nonumber\\
 \Sigma_N(\nu)&:=\left\{\sigma\in (\supp\mu)^N:\frac1N\#\{i\in\{1,\ldots,N\}:\sigma_{\cdot,i}=a_j\}=\nu_j,\ \forall j\in\{1,\ldots,m\}\right\}.
 \label{eq:type-set}
\end{align}
For each $\nu \in \Delta_{m,N}$, let $\lambda_N(\nu)$ be the uniform probability measure on $\Sigma_N(\nu)$.
For these admissible empirical measures, write
\begin{equation}\label{eq:normalized-constrained-notation}
 \bar F_N^\nu:=\bar F_{N,\lambda_N(\nu)}.
\end{equation}
Define $\mu_j:=\mu(\{a_j\})>0$ for $j\in\{1,\ldots,m\}$. Thus $\mu=\sum_{j=1}^m\mu_j\delta_{a_j}$. For $\nu\in\Delta_{m,N}$, define $\bar F_{N,\mu^{\otimes N},\Sigma_N(\nu)}:\R_+\times\cQ_\infty\to\R$ by
\begin{equation}\label{eq:constrained-free-energy}
 \bar F_{N,\mu^{\otimes N},\Sigma_N(\nu)}(t,q):=-\frac1N\E\log
 \int_{\Sigma_N(\nu)}\int_{\supp\cR}
 \exp\bigl(H_N^{t,q}(\sigma,\alpha)\bigr)
 \cR(\mathrm d\alpha)\mu^{\otimes N}(\mathrm d\sigma).
\end{equation}
By Lemma~\ref{lem:regularity}, $\bar F_{N,\mu^{\otimes N},\Sigma_N(\nu)}$ has a unique $L^1$-Lipschitz extension to $\R_+\times\cQ_1$; we use the same notation for this extension.

For every $\nu\in\Delta_{m,N}$, $\mu^{\otimes N}$ is constant on $\Sigma_N(\nu)$ and we have
\begin{equation}\label{eq:type-mass}
 |\Sigma_N(\nu)|=\frac{N!}{\prod_{j=1}^m(N\nu_j)!}
 \quad\text{and}\quad
 \mu^{\otimes N}(\Sigma_N(\nu))
 =\frac{N!}{\prod_{j=1}^m(N\nu_j)!}\prod_{j=1}^m\mu_j^{N\nu_j}.
\end{equation}
In particular, we have
\begin{equation}\label{eq:restricted-reference-normalization}
 \1_{\Sigma_N(\nu)}\mu^{\otimes N}=\mu^{\otimes N}(\Sigma_N(\nu))\lambda_N(\nu).
\end{equation}
Equations~\eqref{eq:constrained-free-energy} and~\eqref{eq:restricted-reference-normalization} give
\begin{equation}\label{eq:normalization}
 \bar F_{N,\mu^{\otimes N},\Sigma_N(\nu)}(t,q)
 =\bar F_N^\nu(t,q)-\frac1N\log\mu^{\otimes N}(\Sigma_N(\nu)).
\end{equation}
\begin{lemma}\label{lem:type-probability}
For every $N\in\N$ and $\nu\in\Delta_{m,N}$,
\begin{equation}\label{eq:stirling}
 0\le-\frac1N\log\mu^{\otimes N}(\Sigma_N(\nu))-\Ent(\nu\mid\mu)
 \le\frac{m\log(N+1)}N.
\end{equation}
\end{lemma}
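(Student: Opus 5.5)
Using \eqref{eq:type-mass}, I will write
\begin{equation*}
 -\frac1N\log\mu^{\otimes N}(\Sigma_N(\nu))
 =-\frac1N\log|\Sigma_N(\nu)|-\sum_{j=1}^m\nu_j\log\mu_j .
\end{equation*}
Since $\nu$ charges only points of $\supp\mu$, where $\mu_j>0$, we have $\nu\ll\mu$ and $\Ent(\nu\mid\mu)=\sum_j\nu_j\log\nu_j-\sum_j\nu_j\log\mu_j$, with the convention $0\log0=0$. Writing $H(\nu):=-\sum_j\nu_j\log\nu_j$ for the Shannon entropy, the $\mu$-terms cancel and the quantity to bound is
\begin{equation*}
 -\frac1N\log\mu^{\otimes N}(\Sigma_N(\nu))-\Ent(\nu\mid\mu)=H(\nu)-\frac1N\log|\Sigma_N(\nu)|.
\end{equation*}
The lemma thus reduces to the classical method-of-types estimate
\begin{equation*}
 (N+1)^{-m}e^{NH(\nu)}\le|\Sigma_N(\nu)|\le e^{NH(\nu)}.
\end{equation*}

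For the upper bound on $|\Sigma_N(\nu)|$, which gives the lower bound $0$ in \eqref{eq:stirling}, I will use the i.i.d. product measure $\nu^{\otimes N}$. It assigns every $\sigma\in\Sigma_N(\nu)$ the same mass $\prod_j\nu_j^{N\nu_j}=e^{-NH(\nu)}$. Since the total mass is at most $1$, we get $|\Sigma_N(\nu)|e^{-NH(\nu)}\le1$.

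For the lower bound on $|\Sigma_N(\nu)|$, I will show that the type class $\Sigma_N(\nu)$ is the most likely type class under $\nu^{\otimes N}$. That is, $\nu^{\otimes N}(\Sigma_N(\nu))\ge\nu^{\otimes N}(\Sigma_N(\nu'))$ for every $\nu'\in\Delta_{m,N}$. This is the main step. The ratio of the two masses is
\begin{equation*}
 \prod_j\frac{(N\nu'_j)!}{(N\nu_j)!}\,\nu_j^{N(\nu_j-\nu'_j)}.
\end{equation*}
Using $n!/k!\ge k^{n-k}$ for integers $n,k\ge0$, this ratio is at least
\begin{equation*}
 \prod_j N^{N(\nu_j-\nu'_j)}=N^{0}=1.
\end{equation*}
Since $|\Delta_{m,N}|\le(N+1)^m$ and the type classes partition $(\supp\mu)^N$, we obtain $1\le(N+1)^m\nu^{\otimes N}(\Sigma_N(\nu))=(N+1)^m|\Sigma_N(\nu)|e^{-NH(\nu)}$. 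Taking logarithms and dividing by $N$ gives the upper bound $m\log(N+1)/N$ in \eqref{eq:stirling}.

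The only delicate points are the factorial inequality in the case $n<k$, where it reads $k!/n!\le k^{k-n}$ and holds trivially, and the coordinates with $\nu_j=0$. Those coordinates contribute trivially with $0^0=1$. No earlier result of the paper is needed beyond \eqref{eq:type-mass}.
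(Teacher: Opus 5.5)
Your proposal is correct and follows essentially the paper's argument. Your identity $|\Sigma_N(\nu)|e^{-NH(\nu)}=\nu^{\otimes N}(\Sigma_N(\nu))$ is the paper's $\mu^{\otimes N}(\Sigma_N(\nu))=e^{-N\Ent(\nu\mid\mu)}\nu^{\otimes N}(\Sigma_N(\nu))$ in disguise, and $n!/k!\ge k^{n-k}$ is the paper's fact that $\ell\mapsto(N\nu_j)^\ell/\ell!$ is maximized at $\ell=N\nu_j$, which makes $\Sigma_N(\nu)$ the most likely type class under $\nu^{\otimes N}$. Two cosmetic points: each factor of your lower bound is $N^{N(\nu'_j-\nu_j)}$ rather than $N^{N(\nu_j-\nu'_j)}$ (the product is $1$ either way), and when $\nu_j=0<\nu'_j$ the ratio is undefined but the comparison is trivial because $\nu^{\otimes N}(\Sigma_N(\nu'))=0$.
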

\begin{proof} 
We have the exact identity 
\begin{equation}\label{eq:type-probability-identity}
 \mu^{\otimes N}(\Sigma_N(\nu))
 =e^{-N\Ent(\nu\mid\mu)}\nu^{\otimes N}(\Sigma_N(\nu)).
\end{equation}
To conclude, it suffices to show that $\nu^{\otimes N}(\Sigma_N(\nu)) \geq (N+1)^{-m}$. First observe that by multinomial factorization, we have 
\begin{equation}
    \sum_{\ell} \frac{N!}{\prod_{j=1}^m \ell_j!} \prod_{j  =1}^m \nu_j^{\ell_j}= (\nu_1+\dots+\nu_m)^N = 1,
\end{equation}
where the sum is taken over $\ell \in \{0,\dots,N\}^m$ such that $\sum_j \ell_j = N$. Also observe that the map $\ell \mapsto (N\nu_j)^\ell/\ell!$ is maximized over $\N$ at $\ell = N \nu_j$. Hennce, each term in the sum of the previous display is $\leq \nu^{\otimes N}(\Sigma_N(\nu))$. Since there is less than $(N+1)^m$ terms in the sum, we obtain 
\begin{equation}
    (N+1)^m \nu^{\otimes N}(\Sigma_N(\nu)) \geq \sum_{\ell} \frac{N!}{\prod_{j=1}^m \ell_j!} \prod_{j  =1}^m \nu_j^{\ell_j}= (\nu_1+\dots\nu_m)^N = 1,
\end{equation}
thus the result is proven.
\end{proof}


Let $\nu \in \Delta_{m,N}$, assume that there exists $x_1,\dots,x_N \in \B$ such that $\nu=N^{-1}\sum_{i=1}^N\delta_{x_i}$. Then, we have 
\begin{equation}
    \Sigma_N(\nu)  = \{ (x_{\pi(i)})_{1 \leq i \leq N} : \, \pi \in \mfk S_N \}.
\end{equation}
In particular, if $\Pi$ is a uniform random variable on the symmetric group $\mathfrak S_N$ and $\sigma_{\cdot,i} = x_{\Pi(i)}$, then $\sigma$ is a random variable with law $\lambda_N(\nu)$.

\begin{lemma}\label{lem:type-continuity}
If $\nu=N^{-1}\sum_{i=1}^N\delta_{x_i}$ and $\nu'=N^{-1}\sum_{i=1}^N\delta_{y_i}$ for $x_i,y_i\in\B$, then for every $(t,q) \in \R_+ \times \mcl Q_\infty$,
\begin{equation}\label{eq:type-continuity}
 |\bar F_N^\nu(t,q)-\bar F_N^{\nu'}(t,q)|
 \le2\bigl(tM_1+|q|_{L^\infty}\bigr)W_1(\nu,\nu').
\end{equation}
Here $W_1$ is the Wasserstein distance on $\cP(\B)$ with cost $|a-b|$.
\end{lemma}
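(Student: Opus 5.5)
Our plan is to deduce \eqref{eq:type-continuity} directly from Lemma~\ref{lem:coupling}, building a coupling of $\lambda_N(\nu)$ and $\lambda_N(\nu')$ out of an optimal matching of the atoms. The relevant fact is that for two empirical measures with $N$ atoms each, the Wasserstein distance is attained by a permutation:
\begin{equation*}
 W_1(\nu,\nu')=\min_{\rho\in\mathfrak S_N}\frac1N\sum_{i=1}^N|x_i-y_{\rho(i)}|.
\end{equation*}
To justify this, note that the coupling problem between $\nu$ and $\nu'$ is a linear program over $N^{-1}$ times doubly stochastic matrices (after splitting repeated atoms into separate indices). By Birkhoff--von Neumann, the extreme points of this set are permutation matrices, so a linear cost attains its minimum at one of them. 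Relabeling the $y_i$, we may assume $\rho=\mathrm{id}$, so that $N^{-1}\sum_i|x_i-y_i|=W_1(\nu,\nu')$.

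Next we construct the coupling. Let $\Pi$ be uniform on $\mathfrak S_N$ and set $\sigma_{\cdot,i}:=x_{\Pi(i)}$ and $\tau_{\cdot,i}:=y_{\Pi(i)}$. By the remark preceding the lemma, $\sigma$ has law $\lambda_N(\nu)$ and $\tau$ has law $\lambda_N(\nu')$; this uses the description of $\Sigma_N(\nu)$ as the set of permutations of $(x_i)$. Moreover, almost surely,
\begin{equation*}
 \frac1N\sum_{i=1}^N|\sigma_{\cdot,i}-\tau_{\cdot,i}|=\frac1N\sum_{i=1}^N|x_{\Pi(i)}-y_{\Pi(i)}|=\frac1N\sum_{j=1}^N|x_j-y_j|=W_1(\nu,\nu').
\end{equation*}
So the joint law of $(\sigma,\tau)$ is a coupling supported on pairs satisfying the hypothesis of Lemma~\ref{lem:coupling} with $a=W_1(\nu,\nu')$. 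Applying that lemma gives \eqref{eq:type-continuity} for every $t\in\R_+$ and $q\in\cQ_\infty$.

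The main point needing care is that $\lambda_N(\nu)$ really equals the law of $(x_{\Pi(i)})_i$ when atoms repeat. The uniform measure on $\Sigma_N(\nu)$ gives equal weight to distinct arrangements. Each distinct arrangement is produced by exactly $\prod_j(N\nu_j)!$ permutations, so pushing forward the uniform law on $\mathfrak S_N$ still yields the uniform law on $\Sigma_N(\nu)$. The matching step is the only nontrivial input; if one prefers to avoid Birkhoff--von Neumann, a weaker version with an arbitrary permutation $\rho$ followed by a minimization over $\rho$ gives the same bound.
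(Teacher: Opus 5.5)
Your proposal is correct and follows the paper's proof exactly: relabel via Birkhoff--von Neumann so that the identity matching is optimal, couple $\lambda_N(\nu)$ and $\lambda_N(\nu')$ through a common uniform permutation $\Pi$, and apply Lemma~\ref{lem:coupling} with $a=W_1(\nu,\nu')$. One small caveat: your closing remark that minimizing over an arbitrary permutation $\rho$ avoids Birkhoff--von Neumann is not right, because that only gives the bound with the assignment cost $\min_{\rho}N^{-1}\sum_i|x_i-y_{\rho(i)}|$, and showing that this cost equals $W_1(\nu,\nu')$ is precisely the Birkhoff--von Neumann step.
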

\begin{proof}
After relabeling $(y_i)_{i=1}^N$ (by the Birkhoff--von Neumann theorem), we may assume that
\begin{equation*}
 W_1(\nu,\nu')=\frac1N\sum_{i=1}^N|x_i-y_i|.
\end{equation*}
Let $\Pi$ be uniform on the symmetric group $\mathfrak S_N$, and define $\sigma,\tau\in\B^N$ by
\begin{equation*}
 \sigma_{\cdot,i}:=x_{\Pi(i)}
 \quad\text{and}\quad
 \tau_{\cdot,i}:=y_{\Pi(i)},
 \qquad i\in\{1,\ldots,N\}.
\end{equation*}
The law of $\sigma$ is $\lambda_N(\nu)$, and the law of $\tau$ is $\lambda_N(\nu')$. Moreover, every pair in the support of this coupling satisfies
\begin{equation*}
 \frac1N\sum_{i=1}^N|\sigma_{\cdot,i}-\tau_{\cdot,i}|
 =\frac1N\sum_{i=1}^N|x_i-y_i|
 =W_1(\nu,\nu').
\end{equation*}
Lemma~\ref{lem:coupling}, with $a=W_1(\nu,\nu')$, gives~\eqref{eq:type-continuity}.
\end{proof}

\section{An approximate Hamilton--Jacobi equation for constrained free energy}\label{sec:residual}

The goal of this section is to prove the following approximate Hamilton--Jacobi equation for the constrained free energy.

\begin{proposition}\label{prop:residual}
Assume $\supp\mu=\{a_1,\ldots,a_m\}$.
There is a constant $C=C_{D,M_2}<\infty$, where $M_2$ was defined in~\eqref{e.M_i=}, such that
for every $N\in\N$, $\nu\in\Delta_{m,N}$, $t>0$, $\varepsilon>0$, and $q\in C^1([0,1];S^D_+)$ satisfying $\dot q(s)\succeq\varepsilon I_D$ for all $s\in[0,1]$,
\begin{equation}\label{eq:residual}
 \left|\partial_t\bar F_N^\nu(t,q)
 -\int_0^1\xi\bigl(\partial_q\bar F_N^\nu(t,q,s)\bigr)\dd s\right|
 \le C(N\varepsilon)^{-1/3}.
\end{equation}
\end{proposition}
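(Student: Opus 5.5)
The plan is to use the replica formulas~\eqref{eq:replica-derivatives} of Lemma~\ref{lem:regularity}. They express the left-hand side of~\eqref{eq:residual} as
\begin{equation*}
 \E\langle\xi(\overlap_N(\sigma,\sigma'))\rangle-\int_0^1\xi\bigl(\E\langle \overlap_N(\sigma,\sigma')\mid\alpha\wedge\alpha'=s\rangle\bigr)\dd s,
\end{equation*}
where the inner quantity is interpreted through the density of $\alpha\wedge\alpha'$ under the Gibbs measure. More precisely, $\partial_q\bar F_N^\nu(t,q,s)$ is the conditional mean of the overlap given $\alpha\wedge\alpha'=s$. So the task reduces to showing that, conditionally on the cascade overlap $\alpha\wedge\alpha'=s$, the spin overlap $\overlap_N(\sigma,\sigma')$ concentrates around a deterministic matrix $\bar R(s)$, and that the law of $\alpha\wedge\alpha'$ under $\E\langle\cdot\rangle$ is close to uniform in the right sense. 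Since $\xi$ is $C^2$ with Hessian bounded by $M_2$ on the unit ball, a second-order Taylor expansion around $\bar R(s)$ bounds the residual by $M_2$ times the conditional variance. Jensen's inequality goes in neither direction here because $\xi$ is not convex, so the Taylor bound is what handles both sides.

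The first step is to realize the enrichment for $C^1$ paths with $\dot q\succeq\varepsilon I_D$ through the Parisi-type diffusion. Since $q$ is smooth and strictly increasing, $W_N^q(\alpha)$ can be written as a Brownian-type process evaluated along the branch $\alpha$. Conditioning on the coordinates up to level $s$ (the common ancestor of $\alpha,\alpha'$) makes the two replicas conditionally independent beyond $s$. The conditional overlap given the path up to level $s$ is then the product $\langle\sigma\rangle_s\langle\sigma\rangle_s^\intercal/N$ of conditional magnetizations. This is the kind of statement in Lemma~\ref{lem:diffusion-derivatives}: derivatives of the diffusion/Parisi PDE in the field variable give these conditional magnetizations and their Hessians.

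The second step is to control fluctuations of these conditional magnetization overlaps in the joint randomness (Gaussian disorder $\mathbf g$, diffusion coordinates) via log-concavity. The finite support and the fixed type class $\Sigma_N(\nu)$ make $\lambda_N(\nu)$ a fixed finite measure. For the Gaussian coordinates, the conditional law of the field at level $s$ given the Gibbs measure is log-concave, with strong convexity parameter of order $N\dot q\gtrsim N\varepsilon$ coming from $\dot q\succeq\varepsilon I_D$; this is where~\cite{AronowLopatto2026} enters. Brascamp--Lieb then bounds the variance of a Lipschitz function of the field (such as a normalized derivative of the free energy) by $O((N\varepsilon)^{-1})$ times the squared gradient. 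The overlap is a derivative of such a function, so converting the variance bound into concentration of the overlap needs a standard convexity/mollification argument. That argument trades a smoothing scale $\delta$ for an error of order $\delta+(N\varepsilon\delta^2)^{-1/2}$ or similar, and optimizing $\delta$ gives the exponent $(N\varepsilon)^{-1/3}$. This is Lemma~\ref{lem:overlap-concentration}.

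Finally, I would integrate over $s$ and combine with the Taylor bound, using $|\overlap_N|\le1$ so that all quantities stay in the unit ball where $M_2$ controls $\xi$. The constant depends only on $D$ (through dimension counting in the matrix Taylor expansion and Brascamp--Lieb) and on $M_2$. The main obstacle is the second step. Getting a bound uniform in $t$, $q$, $\nu$ and $N$ requires the strong log-concavity to come only from $\varepsilon$, not from anything model-dependent. It also requires the conversion from free-energy fluctuation bounds to conditional overlap concentration to lose only the $1/3$ power. I would first try to carry out the $\delta$-smoothing directly at the level of the Parisi functional in the field variable at level $s$, exploiting its convexity in that variable.
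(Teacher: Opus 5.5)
Your reduction agrees with the paper's. Lemma~\ref{lem:diffusion-derivatives} is the diffusion form of your conditioning on $\alpha\wedge\alpha'=s$, and the law of $\alpha\wedge\alpha'$ under $\E\langle\cdot\rangle$ is exactly uniform, so nothing needs proving there. Taylor's theorem then bounds the residual by $\frac{M_2}{2}\int_0^1\E|\overlap_s-\E\overlap_s|^2\dd s$.

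The gap is in your second step, and the clearest symptom is that it never uses the type constraint. The only role you give $\Sigma_N(\nu)$ is that $\lambda_N(\nu)$ is a fixed finite measure, which is equally true of $\mu^{\otimes N}$. If the step worked as described, it would give the same uniform residual bound for $\bar F_{N,\mu^{\otimes N}}$. Since $\bar F_{N,\mu^{\otimes N}}(0,\cdot)=\psi$ by tensorization, the comparison argument of Section~\ref{sec:constrained-HJ} would then prove Conjecture~\ref{conj:} for every finitely supported $\mu$, which is open.

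Concretely, two things fail.
\begin{enumerate}
\item The law of the Gaussian inputs $(\bg,\bz_0,B)$ along the Parisi diffusion has density proportional to $\exp\bigl(\Phi(1,\bY_1;\bg)-\int_0^1\Phi(r,\bY_r;\bg)\dd r\bigr)$ relative to their Gaussian law, where $\bY=\sqrt{2q(0)}\bz_0+\int_0^\cdot\sqrt{2\dot q}\,\mathrm dB$ is the driftless process. The convex term $\Phi(1,\cdot\,;\cdot)$ destroys log-concavity in general, so Brascamp--Lieb is not available for this law, with a constant of order $N\varepsilon$ or any other.
\item The route ``variance of a free energy, then convexity/mollification to pass to a derivative'' controls disorder fluctuations of Gibbs averages, on average over a perturbation parameter. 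That is exactly the non-uniform error the paper is designed to avoid. It also does not address the fluctuation of $\overlap_s$ under the Gibbs measure and cascade, which is what $\int_0^1\E|\overlap_s-\E\overlap_s|^2\dd s$ measures.
\end{enumerate}

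The missing argument runs as follows. Split $\overlap_s-\E\overlap_s$ into a thermal part $\overlap_s-\E[\overlap_s\mid\mathcal G_s]$ and the conditional mean $\E[\overlap_s\mid\mathcal G_s]=N^{-1}m_sm_s^\intercal$, where $m_s=\nabla_{\bx}\Phi(s,\bX_s;\bg)$.

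The thermal part is controlled by $\mathrm{Cov}(\operatorname{vec}(\sigma_1)\mid\mathcal G_s)\preceq s^{-1}\nabla^2_{\bx}\Phi(s,\bX_s;\bg)$ together with the It\^o-isometry bound $\int_0^1\E|\nabla_{\bx}^2\Phi(s,\bX_s;\bg)|^2\dd s\le N/(2\varepsilon)$. This martingale bound, not a convexity constant, is where $\dot q\succeq\varepsilon I_D$ enters.

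The conditional mean is replaced by a time average $U_s$ over $[s-\delta,s]$, built from $\sigma_1$ and $(m_r)_{r\in[s-\delta,s]}$. The cost is $O(\delta)$ in time-integrated mean square, coming from martingale increments. One then conditions on $\sigma_1=\sigma$. The terminal Gibbs weight of $\sigma$ cancels the factor $e^{\Phi(1,\bY_1;\bg)}$ in the Girsanov density. The remaining potential has Hessian $\succeq I$ in standard Gaussian coordinates, by joint convexity of $\Phi$ in $(\bx,\bg)$. Brascamp--Lieb, with $\nabla U_s$ again controlled through $\nabla^2_{\bx}\Phi$, gives $\int_\delta^1\E\Var(U_s\mid\sigma_1)\dd s\le(2N\varepsilon\delta^2)^{-1}$.

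What remains is $\Var(\E[U_s\mid\sigma_1])$, the term through which different types of $\sigma_1$ could produce different conditional means. The paper removes it by symmetry. The joint law of $(\bg,\bz_0,B,\sigma_1)$ is invariant under simultaneous column permutations, $U_s$ is invariant, and $\mathfrak S_N$ acts transitively on $\Sigma_N(\nu)$. Hence $\E[U_s\mid\sigma_1]$ is constant. This is the indispensable use of the constraint, and it is absent from your plan.

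The resulting bookkeeping is $\delta+(N\varepsilon\delta^2)^{-1}+(N\varepsilon\delta)^{-1/2}$, which gives the exponent $1/3$ at $\delta=(N\varepsilon)^{-1/3}$. Your tentative trade-off $\delta+(N\varepsilon\delta^2)^{-1/2}$ would only give $(N\varepsilon)^{-1/4}$.
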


As explained in the introduction, the proof uses ingredients similar to those of the proof of~\cite[Proposition 3.9]{ho2026convergence}. 

Throughout the first two subsections, we fix $N,\nu,t,\varepsilon,q$ satisfying the assumptions of Proposition~\ref{prop:residual}.

\subsection{Parisi representation and derivative identities}

Let $\mathrm{Cov}_N$ be the covariance matrix on $(\supp\mu)^N$, with entries $\mathrm{Cov}_N(\sigma,\tau):=N\xi(\overlap_N(\sigma,\tau))$. By~\eqref{eq:covariance}, $\mathrm{Cov}_N\succeq0$. 
For the standard basis $(\mathbf e_\sigma)_{\sigma\in (\supp\mu)^N}$ of $\R^{(\supp\mu)^N}$, 
put $\mathbf h_N(\sigma):=\mathrm{Cov}_N^{1/2}\mathbf e_\sigma$. 
If $\bg$ is a standard Gaussian vector in $\R^{(\supp\mu)^N}$, then
\begin{equation}\label{eq:orbit-realization}
 H_N(\sigma;\bg):=\mathbf h_N(\sigma)\cdot \bg,
 \qquad \mathbf h_N(\sigma)\cdot\mathbf h_N(\tau)=N\xi(\overlap_N(\sigma,\tau)).
\end{equation}
We use this realization to differentiate in the disorder and to exploit permutation symmetry.

For $\pi\in\mathfrak S_N$, write $\sigma^\pi_{\cdot,i}:=\sigma_{\cdot,\pi(i)}$ and let $Q_\pi \mathbf e_\sigma:=\mathbf e_{\sigma^\pi}$. 
Since $\mathrm{Cov}_N$ and $\mathrm{Cov}_N^{1/2}$ commute with $Q_\pi$,
\begin{equation}\label{eq:orbit-equivariance}
 H_N(\sigma^\pi;Q_\pi \bg)=H_N(\sigma;\bg).
\end{equation}

For $\bg\in\R^{(\supp\mu)^N}$, define a positive finite measure $\Lambda_{\bg}$ on $\Sigma_N(\nu)$ by
\begin{equation*}
 \Lambda_{\bg}(\mathrm d\sigma):=
 \exp\left(\sqrt{2t}\,H_N(\sigma;\bg)-tN\xi(\overlap_N(\sigma,\sigma))\right)
 \lambda_N(\nu)(\mathrm d\sigma).
\end{equation*}
Let $\Phi:[0,1]\times\R^{D\times N}\times\R^{(\supp\mu)^N}\to\R$ solve the following Parisi PDE
\begin{align}
    \label{eq:Parisi-PDE}
    \begin{cases}
 -\partial_s\Phi(s,\bx;\bg)
 = 
 \Bigl(
 \sum_{i=1}^N\dot q(s)\cdot
 \nabla^2_{\bx_{\cdot,i}}\Phi
+s\dot q(s)\cdot\nabla_{\bx}\Phi
 (\nabla_{\bx}\Phi)^\intercal
 \Bigr)
 (s,\bx;\bg),
& (s,\bx;\bg)\in [0,1]\times\R^{D\times N}\times\R^{(\supp\mu)^N}
 \\
 \Phi(1,\bx;\bg)=\log\int\exp\left(\bx\cdot\sigma-q(1)\cdot\sigma\sigma^\intercal\right)\Lambda_{\bg}(\mathrm d\sigma),
& (\bx;\bg)\in \R^{D\times N}\times\R^{(\supp\mu)^N}.
\end{cases}
\end{align}
Let $\bz_0$ be standard Gaussian in $\R^{D\times N}$, and let $B:[0,1]\to\R^{D\times N}$ be a standard Brownian motion, 
with $\bg,\bz_0,B$ mutually independent and independent of the cascade.

\begin{lemma}\label{lem:Parisi-diffusion}
The equation~\eqref{eq:Parisi-PDE} has a unique classical solution $\Phi$ with bounded spatial derivatives of positive order, and $|\nabla_{\bx_{\cdot,i}}\Phi|\le1$ for every $i\in\{1,\dots,N\}$. Moreover, we have
\begin{equation}\label{e.F_N=-1/NEPhi}
 \bar F_N^\nu(t,q)=-\frac1N\E\Phi(0,\sqrt{2q(0)}\bz_0;\bg).
\end{equation}
The stochastic differential equation
\begin{equation}\label{eq:diffusion}
 \mathrm d\bX_s=2s\dot q(s)\nabla_{\bx}\Phi(s,\bX_s;\bg)\dd s+\sqrt{2\dot q(s)}\,\mathrm dB_s,
 \quad\text{and}\quad \bX_0=\sqrt{2q(0)}\bz_0
\end{equation}
admits a unique strong solution on $[0,1]$.
\end{lemma}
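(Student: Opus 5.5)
We would treat the three assertions in order: well-posedness of~\eqref{eq:Parisi-PDE} together with derivative bounds, then the identity~\eqref{e.F_N=-1/NEPhi}, and finally the SDE. Throughout, $\bg$ is frozen and enters only as a parameter through the terminal datum. For fixed $\bg$ the measure $\Lambda_{\bg}$ is a positive finite measure on the finite set $\Sigma_N(\nu)\subset\B^N$. Hence the terminal datum $\Phi(1,\cdot;\bg)$ is a log-sum-exp of affine functions of $\bx$ and is smooth. Every $\bx$-derivative of positive order is a cumulant of the finite Gibbs measure $\propto\exp(\bx\cdot\sigma-q(1)\cdot\sigma\sigma^\intercal)\Lambda_{\bg}(\mathrm d\sigma)$, so all such derivatives are bounded. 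In particular $\nabla_{\bx_{\cdot,i}}\Phi(1,\cdot)$ is a Gibbs average of $\sigma_{\cdot,i}\in\B$ and has norm at most $1$.

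\emph{Existence and uniqueness.} The PDE is a backward semilinear parabolic equation with diffusion matrix $\dot q(s)\otimes I_N$, which is uniformly elliptic because $\dot q\succeq\varepsilon I_D$ and continuous in $s$ because $q\in C^1$. The nonlinearity is quadratic in the gradient. We would use the standard route for Parisi PDEs, as in~\cite[Lemma 3.3]{chen2023parisi} and~\cite{auffinger2015parisi}, following three steps.
\begin{enumerate}
\item Approximate $q$ by step paths $q_n$ converging in $C^0$. For a step path the equation is solved explicitly by iterated Cole--Hopf transforms: on each constancy interval of the parameter $s$ in the nonlinearity, $\Phi=\frac1{s}\log(\text{heat semigroup applied to }e^{s\Phi})$. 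This explicit solution preserves smoothness and bounded derivatives.
\item Establish a priori bounds on all spatial derivatives that are uniform in $n$. We would obtain them by differentiating the equation and applying the maximum principle. The gradient bound below controls the quadratic term, so the differentiated equations are linear parabolic with bounded coefficients.
\item Pass to the limit by Arzel\`a--Ascoli to obtain a classical solution. Uniqueness follows from comparison for semilinear equations whose solutions have bounded gradients: the difference of two solutions solves a linear equation with bounded drift coefficient.
\end{enumerate}

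\emph{Gradient bound.} Differentiating~\eqref{eq:Parisi-PDE} in $\bx$ and applying It\^o's formula along the diffusion~\eqref{eq:diffusion} started from $(s,\bx)$, we expect $u\mapsto\nabla_{\bx}\Phi(u,\bX_u)$ to be a martingale. This gives $\nabla_{\bx_{\cdot,i}}\Phi(s,\bx)=\E[\nabla_{\bx_{\cdot,i}}\Phi(1,\bX_1)\mid\bX_s=\bx]$, and the right-hand side is an average of points of the closed unit ball $\B$. The same bound can be read off directly from the Cole--Hopf formula in the step-path case, which is how the a priori estimate in step~2 would actually be proved.

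\emph{Representation~\eqref{e.F_N=-1/NEPhi}.} For a step path $q$, we would compare the iterated Cole--Hopf recursion with the recursive formula for the free energy of a finite Ruelle probability cascade. Conditionally on $\bg$, the Gaussian field $W_N^q$ is built from independent increments with covariances $q(s_{k+1})-q(s_k)$. The recursion $\Phi(s_k)=\frac1{\zeta_k}\log\E\exp(\zeta_k\Phi(s_{k+1}))$ is exactly the standard RPC averaging identity~\cite[Section 4]{chen2025free}, with $\zeta_k$ given by the uniform overlap distribution, i.e.\ $\zeta_k=s_k$. Taking $-\frac1N\E$ over $\bz_0$ and $\bg$, and using that $H_N(\cdot;\bg)$ has the law required by~\eqref{eq:orbit-realization}, yields~\eqref{e.F_N=-1/NEPhi} for step paths. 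For general $q$, the left side is continuous in $q$ in $L^1$ by Lemma~\ref{lem:regularity}. On the right side, $\Phi(0,\cdot)$ is stable under uniform convergence $q_n\to q$; we would prove this stability by a Gr\"onwall or comparison argument using the uniform derivative bounds.

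\emph{The SDE.} The drift $2s\dot q(s)\nabla_{\bx}\Phi(s,\bx;\bg)$ is continuous in $s$ and globally Lipschitz in $\bx$, since the Hessian of $\Phi$ is bounded. The diffusion coefficient $\sqrt{2\dot q(s)}$ is deterministic and continuous. Standard It\^o theory therefore gives a unique strong solution.

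\emph{Main obstacle.} We expect the hardest part to be the uniform-in-approximation derivative bounds that underlie both the limit passage and the stability of the representation. We would derive these from the explicit Cole--Hopf and Gibbs structure, since that makes the bounds independent of $n$.
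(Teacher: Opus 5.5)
Your plan is a more self-contained route than the paper's proof. The paper imports existence, regularity and the cascade representation from \cite[Proposition 4.7 and Lemma 4.5]{chen2024simultaneous}, and argues directly only two points: the column bound (maximum principle for the once-differentiated equation) and the SDE (globally Lipschitz drift). You handle those two points the same way.

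The step that fails as written is your derivation of $n$-uniform bounds on all spatial derivatives ``by differentiating the equation and applying the maximum principle''. This works at first order, where the equation is linear with the bounded drift $2s\dot q\nabla_{\bx}\Phi$. It also works at orders $k\ge3$ once the Hessian is known to be bounded. It does not work at second order. In column-stacked coordinates, put $A:=I_N\otimes\dot q(s)$ and let $\mathcal L:=A\cdot\nabla^2_{\bx}+2s(A\nabla_{\bx}\Phi)\cdot\nabla_{\bx}$ act entrywise. The Hessian $H:=\nabla^2_{\bx}\Phi$ then satisfies
\[
 -\partial_sH=\mathcal LH+2s\,HAH .
\]
The source $2sHAH$ is quadratic in the unknown, so the gradient bound does not control it. A maximum-principle comparison for $\lambda_{\max}(H)$ only yields the Riccati bound
\[
 \lambda_{\max}(H(s))\le\bigl(\lambda_{\max}(H(1))^{-1}-\int_s^1 2r\norm{\dot q(r)}_{\mathrm{op}}\dd r\bigr)^{-1}.
\]
The terminal Hessian is a Gibbs covariance of $\operatorname{vec}(\sigma)$, whose operator norm is a priori only bounded by $N$, while $\int_0^1 2r\norm{\dot q(r)}_{\mathrm{op}}\dd r\ge\varepsilon$. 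Hence the bound is void at $s=0$ as soon as $N\varepsilon\ge1$, which is precisely the regime in which the lemma is used.

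Naively iterating the one-step Cole--Hopf identity does no better. That identity reads $\nabla^2_{\bx}\Phi(s_k^-)=\langle\nabla^2_{\bx}\Phi(s_k^+)\rangle+s_k\,\mathrm{Cov}(\nabla_{\bx}\Phi(s_k^+))$, with $\langle\cdot\rangle$ the tilted Gaussian average. Bounding the covariance term by $N$ at each step gives a bound that grows linearly in the number of steps.

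The missing idea is to use the Gibbs structure globally, which you only hint at in your closing remark. By the RPC identity you quote, the entire recursion from level $s$ to $1$ gives
\[
 \Phi_n(s,\bx;\bg)=\E\log\iint\exp(\bx\cdot\sigma+\cdots)\,\Lambda_{\bg}(\mathrm d\sigma)\,\mathrm d(\text{cascade}),
\]
with $\bx$ entering only through $\bx\cdot\sigma$. So every $\bx$-derivative of positive order is an averaged Gibbs cumulant of $\operatorname{vec}(\sigma)$, bounded independently of $n$ and $s$; for instance $0\preceq\nabla^2_{\bx}\Phi_n\preceq N I_{DN}$. This is the approximation-level analogue of Lemma~\ref{lem:Parisi-Hessian}(3), which you cannot invoke here since its proof presupposes the present lemma. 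With this bound, the rest of your plan goes through.

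I would also make the approximation precise by freezing the coefficient of the nonlinearity at $s_k$ on each cell $[s_k,s_{k+1})$ while keeping $q$. Replacing $q$ by step paths is awkward, because \eqref{eq:Parisi-PDE} then has no classical meaning and the approximants jump in $s$. With frozen coefficients, the approximants are classical on each cell, and Cole--Hopf turns a cell into a Gaussian convolution with covariance $2(q(s_{k+1})-q(s_k))$ per column, i.e.\ exactly one RPC step with $\zeta_k=s_k$. Take two such approximants with coefficients $\alpha,\alpha'$ (or one of them and $\Phi$). Their difference solves a linear equation with bounded drift and, by the column bound, a source at most $N|\alpha(s)-\alpha'(s)|\norm{\dot q(s)}_{\mathrm{op}}$. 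The maximum principle then gives a sup-norm error of order $N\sup_s\norm{\dot q(s)}_{\mathrm{op}}$ times the mesh. Together with the uniform derivative bounds, this yields both existence of the classical limit and the stability needed to pass~\eqref{e.F_N=-1/NEPhi} to the limit. It replaces your unspecified Gr\"onwall step.
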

\begin{proof}
In column-stacked coordinates,~\eqref{eq:Parisi-PDE} has covariance path $I_N\otimes q$, coefficient $\alpha(s)=s$, and terminal measure $\Lambda_{\bg}$. Existence and regularity follow from~\cite[Proposition 4.7(1), (3), and (4)]{chen2024simultaneous}. Its cascade representation~\cite[Lemma 4.5, (4.10)]{chen2024simultaneous} gives~\eqref{e.F_N=-1/NEPhi} after averaging over $(\bg,\bz_0)$.
Differentiating in $\bx_{\cdot,i}$ gives a linear backward equation whose terminal gradient lies in the unit ball. The maximum principle gives the column bound. The bounded Hessian makes the drift of~\eqref{eq:diffusion} globally Lipschitz, giving a unique strong solution; see also~\cite[Proposition 3.2]{chen2023parisi}.
The finite terminal sum and the same regularity estimates justify the parameter differentiations below: derivatives in $(\bx,\bg)$ and in bounded terminal perturbations are bounded, while the $t$ derivative has at most linear growth in $\bg$ locally on $t>0$.
\end{proof}

Define the filtration $\mathcal G_s:=\sigma\bigl(\bg,\bz_0,(B_r)_{0\le r\le s}\bigr)$ for $s\in[0,1]$.
For all $s\in [0,1]$, let $\sigma_s$ have the conditional law
\begin{equation*}
 \P(\sigma_s=\sigma\mid\mathcal G_s)
 =
\Lambda_{\bg}(\{\sigma\})\,
\mathbb E\!\left[
\exp(\bX_1\cdot\sigma-q(1)\cdot\sigma\sigma^\intercal
-\Phi(1,\bX_1;\bg))
\,\middle|\,\mathcal G_s
\right].
\end{equation*}
Here $\sigma_1$ denotes the terminal spin.

Let $\operatorname{vec}$ stack the columns of a $D$-by-$N$ matrix. Spatial Hessians are taken in these column-stacked coordinates.

\begin{lemma}\label{lem:Parisi-Hessian}
The following statements hold.
\begin{enumerate}
\item 
The function $\Phi(s,\cdot\,;\cdot)$ is jointly convex in $(\bx,\bg)$. 
\item The process $\E[\sigma_1\mid\mathcal G_s]$ is a bounded martingale, and for every $s\in[0,1]$,
\begin{align*}
 \E[\sigma_1\mid\mathcal G_s]=\nabla_{\bx}\Phi(s,\bX_s;\bg).
\end{align*}
\item We have
\begin{align*}
 \nabla_{\bx}^2\Phi(s,\bX_s;\bg)=s\mathrm{Cov}(\operatorname{vec}(\sigma_1)\mid\mathcal G_s)+\E\left[\int_s^1\mathrm{Cov}(\operatorname{vec}(\sigma_1)\mid\mathcal G_r)\dd r\,\middle|\,\mathcal G_s\right].
\end{align*}
\item We have
\begin{equation*}
    \int_0^1\E\bigl|\nabla_{\bx}^2\Phi(s,\bX_s;\bg)\bigr|^2\dd s\le\frac{N}{2\varepsilon}.
\end{equation*}
\end{enumerate}
\end{lemma}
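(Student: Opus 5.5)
We treat the four items in order. Items (2) and (3) are stochastic-calculus identities for the Parisi diffusion, item (1) comes from the cascade representation, and item (4) is an integrated bound obtained by applying Itô's formula to the bounded martingale in (2).

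\emph{Item (1).} We use the cascade representation behind~\eqref{e.F_N=-1/NEPhi}, namely \cite[Lemma 4.5]{chen2024simultaneous}. For step paths $q$ it expresses $\Phi(s,\bx;\bg)$ through iterated Hopf--Cole transforms,
\[
\Phi_{k}=\tfrac{1}{m_k}\log\E\exp(m_k\Phi_{k+1}),
\]
applied to the terminal function
\[
\log\sum_\sigma\exp\bigl(\bx\cdot\sigma+\sqrt{2t}\,\mathbf h_N(\sigma)\cdot\bg+c(\sigma)\bigr)\lambda_N(\nu)(\{\sigma\}).
\]
The terminal function is a log-sum-exp of functions that are affine in $(\bx,\bg)$, so it is jointly convex. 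Each step $\Phi_{k+1}\mapsto\frac1m\log\E\exp(m\Phi_{k+1}(\cdot+\text{Gaussian}))$ preserves joint convexity: it is a convex, increasing, affine-shift-invariant functional of $\Phi_{k+1}$ composed with translation, which Hölder's inequality makes precise. For general $C^1$ paths we pass to the limit of step-path approximations, using the stability estimates of \cite[Proposition 4.7]{chen2024simultaneous}.

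\emph{Items (2) and (3).} Apply Itô's formula to $\nabla_\bx\Phi(s,\bX_s;\bg)$. Differentiating~\eqref{eq:Parisi-PDE} in $\bx$ shows that the drift terms cancel against the drift $2s\dot q\nabla\Phi$ of~\eqref{eq:diffusion}, so this process is a martingale. At $s=1$ it equals the Gibbs mean $\E[\sigma_1\mid\mathcal G_1]$. It is bounded by Lemma~\ref{lem:Parisi-diffusion}, which gives (2).

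For (3), first identify the terminal Hessian $\nabla^2\Phi(1,\bX_1)$ with $\mathrm{Cov}(\operatorname{vec}\sigma_1\mid\mathcal G_1)$. Then apply Itô's formula to $\nabla^2_\bx\Phi(s,\bX_s)$, using the equation obtained by differentiating the PDE twice. The nonlinear term produces $2s(\nabla^2\Phi)\dot q(\nabla^2\Phi)\,ds$ plus a martingale. Comparing with
\[
\mathrm d\,\mathrm{Cov}(\sigma_1\mid\mathcal G_s)=-\,\mathrm d\langle M\rangle_s,\qquad M_s=\E[\sigma_1\mid\mathcal G_s],
\]
and noting $\mathrm d\langle M\rangle_s=(\nabla^2\Phi)(2\dot q)(\nabla^2\Phi)\,ds$, we get the claimed formula: the product rule for $s\,\mathrm{Cov}$ accounts for the $s$-weight, and the time integral accounts for the remainder. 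We verify the formula by checking that both sides have the same terminal value and the same drift.

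\emph{Item (4).} With $M_s=\nabla_\bx\Phi(s,\bX_s)$, Itô's isometry gives
\[
\E|M_1|^2-\E|M_0|^2=\int_0^1\E\,\tr\bigl(\nabla^2\Phi\,(2\dot q\otimes I_N)\,\nabla^2\Phi\bigr)\dd s\ \ge\ 2\varepsilon\int_0^1\E|\nabla^2\Phi|^2\dd s .
\]
Here we used $\dot q\succeq\varepsilon I_D$ together with symmetry of the Hessian. Since every column of $\sigma_1$ lies in $\B$, we have $|M_1|^2\le N$, and dropping $\E|M_0|^2\ge0$ would give the bound $N/(2\varepsilon)$. We need to track the constant carefully; the $s$-dependent factor in the Itô formula may alter it, and we would check this against the stated form.

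The main obstacle is item (3): the algebra of differentiating the PDE twice and matching it with the conditional covariance, with the weight $s$, requires care. We would handle it by first proving it for step paths via the cascade, where conditional covariances are explicit at each level, and then passing to the limit.
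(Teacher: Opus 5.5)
Your proposal is correct. Items (2)--(4) match the paper in substance. Item (1) is proved by a genuinely different argument.

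For (2) and (3), you apply It\^o's formula directly to $\nabla_{\bx}\Phi(s,\bX_s;\bg)$ and $\nabla^2_{\bx}\Phi(s,\bX_s;\bg)$. The paper instead perturbs the terminal measure to $e^{yv}\Lambda_{\bg}$ and reads off the first and second $y$-derivatives. With $v(\sigma)=a\cdot\sigma$, these are exactly the directional versions of your computations: the second derivative has drift $-s\,\mathrm d\langle M\rangle_s$, and it has zero terminal value after subtracting $s$ times the conditional variance.

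For (1), you argue structurally: the terminal function is a log-sum-exp of functions affine in $(\bx,\bg)$, each Hopf--Cole step preserves joint convexity by H\"older, and a step-path approximation handles the $C^1$ path. The paper reuses the same conditional-variance identity with the perturbation $v(\sigma)=a\cdot\sigma+\sqrt{2t}\,b\cdot\mathbf h_N(\sigma)$. By uniqueness this realizes the joint shift $\Phi(s,\bx+ya;\bg+yb)$, so the second directional derivative in $(a,b)$ is a sum of nonnegative conditional variances. Restarting the diffusion from an arbitrary deterministic $(s,\bx)$ then gives convexity everywhere.

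The paper's route yields (1)--(3) from a single identity and needs no approximation in $q$. Yours is independent of the diffusion and works for arbitrary paths. However, you must still justify the limit passage: pointwise convergence of $\Phi(s,\cdot\,;\cdot)$ at each fixed $s$ under step approximations of $q|_{[s,1]}$, via the cited stability estimates.

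Two smaller remarks. Your worry about the constant in (4) is unfounded. The factor $s$ enters only the drift of $\bX$, which cancels, so $\mathrm d\operatorname{vec}(M_s)=\nabla^2_{\bx}\Phi\,\sqrt{2(I_N\otimes\dot q(s))}\,\mathrm d\operatorname{vec}(B_s)$, and the bound is exactly $N/(2\varepsilon)$. Note the Kronecker order $I_N\otimes\dot q$ in column-stacked coordinates, not $\dot q\otimes I_N$; this is harmless here. Your step-path fallback for (3) is also unnecessary, and awkward, since $\dot q$ and the diffusion are undefined for step paths. Bounded spatial derivatives of all positive orders already make the local martingales in your direct It\^o argument true martingales.
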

\begin{proof}
For a bounded $v:\Sigma_N(\nu)\to\R$, let $\Phi^y$ have terminal measure $e^{yv}\Lambda_{\bg}$. All parameter derivatives below are evaluated at $y=0$ along the unperturbed diffusion. Differentiating~\eqref{eq:Parisi-PDE} once in $y$ and applying It\^o's formula show that the first derivative is a martingale with terminal value $\E[v(\sigma_1)\mid\mathcal G_1]$. Hence
\begin{equation*}
 \left.\partial_y\Phi^y(s,\bX_s;\bg)\right|_{y=0}
 =\E[v(\sigma_1)\mid\mathcal G_s].
\end{equation*}
Differentiating twice shows that the second derivative has drift $-s\,\mathrm d[\E[v(\sigma_1)\mid\mathcal G_\cdot]]_s$, whereas the conditional variance $\Var(v(\sigma_1)\mid\mathcal G_s)$ has drift $-\mathrm d[\E[v(\sigma_1)\mid\mathcal G_\cdot]]_s$. Subtracting $s\Var(v(\sigma_1)\mid\mathcal G_s)$ from the second derivative therefore gives a process with drift $-\Var(v(\sigma_1)\mid\mathcal G_s)\dd s$ and zero terminal value. Integration and conditioning give
\begin{equation}\label{eq:scalar-Hessian-identity}
 \begin{aligned}
 \left.\partial_y^2\Phi^y(s,\bX_s;\bg)\right|_{y=0}
 =s\Var(v(\sigma_1)\mid\mathcal G_s)
 +\E\left[\int_s^1\Var(v(\sigma_1)\mid\mathcal G_r)\dd r\,\middle|\,\mathcal G_s\right].
 \end{aligned}
\end{equation}
We now prove the four items.

\smallskip
\noindent\emph{Proof of (1):}
For $a\in\R^{D\times N}$ and $b\in\R^{(\supp\mu)^N}$, take
\begin{equation*}
 v(\sigma)=a\cdot\sigma+\sqrt{2t}\,b\cdot\mathbf h_N(\sigma).
\end{equation*}
Uniqueness gives $\Phi^y(s,\bx;\bg)=\Phi(s,\bx+ya;\bg+yb)$, so $\left.\partial_y^2\Phi^y(s,\bX_s;\bg)\right|_{y=0}$ is the second directional derivative in direction $(a,b)$. By~\eqref{eq:scalar-Hessian-identity}, it is nonnegative. Restarting the diffusion from any deterministic $(s,\bx)$, with $\bg$ fixed, proves joint convexity at every $(s,\bx,\bg)$.

\smallskip
\noindent\emph{Proof of (2):}
For $v(\sigma)=a\cdot\sigma$ with $a\in\R^{D\times N}$, uniqueness gives $\Phi^{\mathbf y}(s,\bx;\bg)=\Phi(s,\bx+\mathbf ya;\bg)$. Thus
\begin{equation*}
 a\cdot\nabla_{\bx}\Phi(s,\bX_s;\bg)=\left.\partial_{\mathbf y}\Phi^{\mathbf y}(s,\bX_s;\bg)\right|_{\mathbf y=0}
 =\E[a\cdot\sigma_1\mid\mathcal G_s].
\end{equation*}
Since $a$ is arbitrary, this proves the gradient identity. The conditional-expectation process is a martingale and is bounded because $|\sigma_1|^2\le N$.

\smallskip
\noindent\emph{Proof of (3):}
For the same spatial perturbation, the second parameter derivative is the Hessian quadratic form in direction $a$. Applying~\eqref{eq:scalar-Hessian-identity} with $v(\sigma)=a\cdot\sigma$ gives
\begin{align*}
 \operatorname{vec}(a)^\intercal \nabla_{\bx}^2\Phi(s,\bX_s;\bg)\operatorname{vec}(a)
 =s\Var(a\cdot\sigma_1\mid\mathcal G_s)
 +\E\left[\int_s^1\Var(a\cdot\sigma_1\mid\mathcal G_r)\dd r\,\middle|\,\mathcal G_s\right].
\end{align*}
Since $a$ is arbitrary, this proves the conditional covariance representation of $\nabla_{\bx}^2\Phi(s,\bX_s;\bg)$.

\smallskip
\noindent\emph{Proof of (4):}
Differentiating~\eqref{eq:Parisi-PDE} once in $\bx$, applying It\^o's formula, and using Lemma~\ref{lem:Parisi-Hessian}(2) give
\begin{equation}\label{eq:Parisi-gradient-Ito}
 \mathrm d\operatorname{vec}(\E[\sigma_1\mid\mathcal G_s])
 =\nabla_{\bx}^2\Phi(s,\bX_s;\bg)\sqrt{2(I_N\otimes\dot q(s))}\,\mathrm d\operatorname{vec}(B_s);
\end{equation}
see~\cite[Lemma 3.3]{chen2023parisi} and~\cite[Lemma 5.3]{chen2024simultaneous}. Since $(I_N\otimes\dot q(s))\succeq\varepsilon I_{DN}$, It\^o's isometry gives
\begin{equation*}
 2\varepsilon\int_0^1\E\bigl|\nabla_{\bx}^2\Phi(s,\bX_s;\bg)\bigr|^2\dd s
 \le\E|\E[\sigma_1\mid\mathcal G_1]|^2-\E|\E[\sigma_1\mid\mathcal G_0]|^2\le\E|\sigma_1|^2\le N.
\end{equation*}
Dividing by $2\varepsilon$ proves the estimate.
\end{proof}

We now provide the derivative identities.

\begin{lemma}\label{lem:diffusion-derivatives}
Let $\sigma_s^1,\sigma_s^2$ be conditionally independent copies of $\sigma_s$ conditioned on $\mathcal G_s$. 
Let $\overlap_s:=N^{-1}\sigma_s^1(\sigma_s^2)^\intercal$. 
Then,
\begin{equation}\label{eq:diffusion-derivatives}
 \partial_q\bar F_N^\nu(t,q,s)=\E\overlap_s
 \quad\text{and}\quad
 \partial_t\bar F_N^\nu(t,q)=\int_0^1\E\xi(\overlap_s)\dd s.
\end{equation}
\end{lemma}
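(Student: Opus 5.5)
Two identities have to be proved. The plan is to compute each derivative of $\bar F_N^\nu$ from the representation~\eqref{e.F_N=-1/NEPhi} by parameter differentiation of the Parisi PDE~\eqref{eq:Parisi-PDE}, then convert the answer into overlaps of conditionally independent copies $\sigma_s^1,\sigma_s^2$ using Lemma~\ref{lem:Parisi-Hessian}(2).

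For the $q$-derivative, the cleanest route is to match with~\eqref{eq:replica-derivatives}: $\ip\kappa{\partial_q\bar F_N^\nu(t,q)}_{L^2}=\E\langle\kappa(\alpha\wedge\alpha')\cdot\overlap_N(\sigma,\sigma')\rangle$. Alternatively, differentiate $-\frac1N\E\Phi(0,\bX_0;\bg)$ along a perturbation $q+y\kappa$. The perturbation changes the PDE coefficient $\dot q$, the initial point $\sqrt{2q(0)}\bz_0$, and the terminal term $-q(1)\cdot\sigma\sigma^\intercal$. The first variation of $\Phi$ along the diffusion~\eqref{eq:diffusion} is then given by Itô's formula and a Feynman--Kac/Duhamel argument, as the expectation of the time integral of the perturbed generator applied to $\Phi$. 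That perturbed generator is
\[
\textstyle\sum_i\dot\kappa\cdot\nabla^2_{\bx_{\cdot,i}}\Phi+s\dot\kappa\cdot\nabla_\bx\Phi(\nabla_\bx\Phi)^\intercal .
\]
The key identity is $\sum_i\nabla^2_{\bx_{\cdot,i}}\Phi$ evaluated along $\bX_s$. By Lemma~\ref{lem:Parisi-Hessian}(3), it equals $s\E[\sigma_1\sigma_1^\intercal-\ldots]$ plus an integral term. By Lemma~\ref{lem:Parisi-Hessian}(2), $\nabla_\bx\Phi(\nabla_\bx\Phi)^\intercal=\E[\sigma_1\mid\mathcal G_s]\E[\sigma_1\mid\mathcal G_s]^\intercal=N\E[\overlap_s\mid\mathcal G_s]$, since the two copies are conditionally independent given $\mathcal G_s$.

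Rather than grind through the combination directly, I would integrate by parts in $s$, moving $\dot\kappa$ onto $\kappa$. The boundary terms at $s=1$ cancel the terminal $-\kappa(1)\cdot\sigma\sigma^\intercal$ contribution, since $\sigma_1\sigma_1^\intercal$ appears in the conditional covariance. The boundary term at $s=0$ cancels the initial-point variation via Gaussian integration by parts in $\bz_0$. After these cancellations, the result should be $\int_0^1\kappa(s)\cdot\E\overlap_s\,\dd s$. Since $\kappa$ is arbitrary, this identifies $\partial_q\bar F_N^\nu(t,q,s)=\E\overlap_s$ for a.e.\ $s$. Right-continuity of both sides in $s$ (the martingale is continuous) then gives equality for every $s$.

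For the $t$-derivative, differentiate the terminal measure $\Lambda_\bg$ in $t$. This amounts to a terminal perturbation $v(\sigma)=\frac{1}{\sqrt{2t}}H_N(\sigma;\bg)-N\xi(\overlap_N(\sigma,\sigma))$, which has linear growth in $\bg$, as allowed by Lemma~\ref{lem:Parisi-diffusion}. The first-variation formula from the proof of Lemma~\ref{lem:Parisi-Hessian} gives $\partial_t\Phi(0,\bX_0;\bg)=\E[v(\sigma_1)\mid\mathcal G_0]$. Taking expectations and applying Gaussian integration by parts in $\bg$ through the realization~\eqref{eq:orbit-realization} produces
\[
-\frac1N\E\bigl[\mathbf h_N(\sigma)\cdot\partial_{\bg}\text{(Gibbs weight)}\bigr].
\]
The $\bg$-derivative of the conditional law of $\sigma_1$ is itself expressed through the diffusion, because $\bg$ enters both the terminal measure and the drift. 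The diagonal term cancels the self-overlap correction. The remaining term is a covariance $\E\,\mathbf h_N(\sigma^1)\cdot\mathbf h_N(\sigma^2)$, where $\sigma^1,\sigma^2$ are two copies that are only \emph{partially} conditionally independent: they share the path up to some time. I expect that sharing time to be uniform on $[0,1]$, matching $\alpha\wedge\alpha'$ being uniform. This would give $\int_0^1\E\xi(\overlap_s)\,\dd s$.

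The main obstacle is this last step. It requires showing that Gaussian integration by parts in $\bg$, through the diffusion, produces exactly replicas that branch at a uniform time $s$ and are conditionally independent given $\mathcal G_s$. I would establish it by checking the identity for step paths $q$, using the finite Ruelle cascade equivalence and the replica formula~\eqref{eq:replica-derivatives}, where the branching structure is explicit. I would then extend to $C^1$ paths $q$ by the $L^1$-continuity of the derivatives in~\eqref{eq:gradient-continuity} and continuity of the diffusion in $q$.
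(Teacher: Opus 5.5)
Your $q$-derivative argument is correct and is essentially the paper's: differentiate \eqref{eq:Parisi-PDE} in the coefficient $\dot q$, rewrite the source term with Lemma~\ref{lem:Parisi-Hessian}(2)--(3), and integrate by parts in $s$. The boundary cancellations you describe at $s=0$ and $s=1$ do check out. The paper instead takes $\kappa\in C_c^\infty((0,1);S^D)$, so neither $q(0)$ nor $q(1)$ moves and those terms never appear. Since a dense family of directions identifies the derivative, this costs nothing, and it spares you from differentiating the Gaussian initial law at a possibly singular $q(0)$. Your ``cleanest route'' through \eqref{eq:replica-derivatives} is not really available: matching the cascade Gibbs average with $\E\overlap_s$ is the content of the lemma.

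The genuine gap is in the time identity. You correctly obtain $\partial_t\bar F_N^\nu(t,q)=\frac1N\E\bigl[|\mathbf h_N(\sigma_1)|^2-(2t)^{-1/2}\,\bg\cdot\mathbf h_N(\sigma_1)\bigr]$, and your picture of two replicas branching at a uniform time is the right one. But you leave its derivation as ``the main obstacle'', and your fallback does not close it, for three reasons.
First, the lemma's objects $\bX$, $\mathcal G_s$, $\sigma_s$, $\overlap_s$ are defined only under $\dot q\succeq\varepsilon I_D$, so there is nothing to check for step paths without building a new representation.
Second, identifying $\E\langle\xi(\overlap_N(\sigma,\sigma'))\rangle$ under the tilted cascade with a branching-diffusion average is the same branching statement in another guise.
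Third, \eqref{eq:gradient-continuity} controls only $\partial_q\bar F_N^\nu$, not $\partial_t\bar F_N^\nu$ or $\int_0^1\E\xi(\overlap_s)\dd s$. The passage from step paths to $C^1$ paths therefore rests on an unproved ``continuity of the diffusion in $q$''.

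The missing idea is that you never need to differentiate the conditional law of $\sigma_1$ through the drift. Two facts combine:
\begin{itemize}
\item $\bX_0=\sqrt{2q(0)}\bz_0$ is independent of $\bg$;
\item the first-variation formula with $v=\sqrt{2t}\,\mathbf h_{N,\ell}$ (for which $e^{yv}\Lambda_{\bg}=\Lambda_{\bg+y\mathbf e_\ell}$) gives $\partial_{\bg_\ell}\Phi(0,\bX_0;\bg)=\sqrt{2t}\,\E[\mathbf h_{N,\ell}(\sigma_1)\mid\mathcal G_0]$.
\end{itemize}
Gaussian integration by parts in $\bg$ at fixed $\bz_0$ then yields
\begin{equation*}
 \frac1{\sqrt{2t}}\E[\bg\cdot\mathbf h_N(\sigma_1)]=\frac1{2t}\sum_\ell\E\,\partial^2_{\bg_\ell}\Phi(0,\bX_0;\bg).
\end{equation*}
Now apply \eqref{eq:scalar-Hessian-identity} at $s=0$ with the same $v$. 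The term $s\Var(v(\sigma_1)\mid\mathcal G_s)$ vanishes. The remaining term $\E[\int_0^1\Var(v(\sigma_1)\mid\mathcal G_r)\dd r\mid\mathcal G_0]$ is exactly your uniform branching time; it comes from the quadratic variation of the martingale $r\mapsto\E[v(\sigma_1)\mid\mathcal G_r]$.

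Summing over $\ell$ gives $\int_0^1\E\bigl[|\mathbf h_N(\sigma_1)|^2-|\E[\mathbf h_N(\sigma_1)\mid\mathcal G_r]|^2\bigr]\dd r$. The first part cancels the self-overlap term. For the second, $|\E[\mathbf h_N(\sigma_1)\mid\mathcal G_r]|^2=N\E[\xi(\overlap_r)\mid\mathcal G_r]$ by conditional independence of $\sigma_r^1,\sigma_r^2$ and \eqref{eq:orbit-realization}. This is the paper's proof, and it uses only tools you already invoke.
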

\begin{proof}

We first prove the time identity. Differentiating~\eqref{eq:Parisi-PDE} in $t$, applying the diffusion representation, and using~\eqref{eq:orbit-realization} and~\eqref{e.F_N=-1/NEPhi} give
\begin{equation*}
 \partial_t\bar F_N^\nu(t,q)
 =\frac1N\E\left[|\mathbf h_N(\sigma_1)|^2-\frac{\bg\cdot\mathbf h_N(\sigma_1)}{\sqrt{2t}}\right].
\end{equation*}
Since $\bX_0$ is independent of $\bg$, Gaussian integration by parts and~\eqref{eq:scalar-Hessian-identity} in the proof of Lemma~\ref{lem:Parisi-Hessian} at $s=0$, with $v=\sqrt{2t}\,\mathbf h_{N,\ell}$, yield
\begin{align*}
 \frac1{\sqrt{2t}}\E[\bg\cdot\mathbf h_N(\sigma_1)]
 &=\frac1{2t}\sum_\ell\E\partial_{\bg_\ell}^2\Phi(0,\bX_0;\bg)\\
 &=\int_0^1\E\left[|\mathbf h_N(\sigma_1)|^2
 -\left|\E[\mathbf h_N(\sigma_1)\mid\mathcal G_s]\right|^2\right]\dd s.
\end{align*}
The self-overlap terms cancel, and conditional independence and~\eqref{eq:orbit-realization} give
\begin{equation*}
 \partial_t\bar F_N^\nu(t,q)
 =\frac1N\int_0^1\E\left|\E[\mathbf h_N(\sigma_1)\mid\mathcal G_s]\right|^2\dd s
 =\int_0^1\E\xi(\overlap_s)\dd s.
\end{equation*}

For the path derivative, we first take $q$ smooth; the approximation argument of~\cite[Lemma 5.5, Steps 1--5]{chen2024simultaneous}, together with Lemma~\ref{lem:regularity}, gives the general case. Fix $\kappa\in C_c^\infty((0,1);S^D)$. Since $\dot q\succeq\varepsilon I_D$, the paths $q+h\kappa$ are admissible for small $|h|$. Summing the diagonal $D$-by-$D$ blocks in Lemma~\ref{lem:Parisi-Hessian}(3) and using conditional independence give
\begin{equation*}
 \sum_{i=1}^N\E\nabla^2_{\bx_{\cdot,i}}\Phi(s,\bX_s;\bg)+Ns\E\overlap_s
 =\E[\sigma_1\sigma_1^\intercal]-N\int_s^1\E\overlap_r\dd r.
\end{equation*}
Differentiating~\eqref{eq:Parisi-PDE} in $h$, applying It\^o's formula along $\bX$, and using Lemma~\ref{lem:Parisi-Hessian}(2) therefore yield
\begin{equation}\label{eq:path-variation-Phi}
 \begin{aligned}
 \left.\partial_h\bar F_N^\nu(t,q+h\kappa)\right|_{h=0}
 &=-\frac1N\int_0^1\dot\kappa(s)\cdot
 \left(\E[\sigma_1\sigma_1^\intercal]-N\int_s^1\E\overlap_r\dd r\right)\dd s\\
 &=\int_0^1\kappa(s)\cdot\E\overlap_s\dd s.
 \end{aligned}
\end{equation}
The last equality follows by integration by parts, since $\kappa(0)=\kappa(1)=0$. Density of these directions in $L^2([0,1];S^D)$ and Lemma~\ref{lem:regularity} identify the path derivative almost everywhere.
\end{proof}

\subsection{Concentration of the overlap and proof of Proposition~\ref{prop:residual}}

We prove the concentration of the conditional overlap introduced in Lemma~\ref{lem:diffusion-derivatives}.

\begin{lemma}\label{lem:overlap-concentration}
There is a constant $C_D<\infty$, depending only on $D$, such that
\begin{equation}\label{eq:total-concentration}
 \int_0^1\E|\overlap_s-\E\overlap_s|^2\dd s
 \le C_D(N\varepsilon)^{-1/3}.
\end{equation}
\end{lemma}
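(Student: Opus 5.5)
The plan is to split the fluctuation of $\overlap_s$ into a \emph{replica} part, where we condition on $\mathcal G_s$, and a \emph{disorder} part, which is the fluctuation of the conditional mean. Write $m_s:=\E[\sigma_1\mid\mathcal G_s]=\nabla_{\bx}\Phi(s,\bX_s;\bg)$, using Lemma~\ref{lem:Parisi-Hessian}(2). Since $\sigma_s^1,\sigma_s^2$ are conditionally independent with conditional mean $m_s$, we have $\E[\overlap_s\mid\mathcal G_s]=N^{-1}m_sm_s^\intercal$. Hence
\begin{equation*}
 \E|\overlap_s-\E\overlap_s|^2=\E\bigl|\overlap_s-N^{-1}m_sm_s^\intercal\bigr|^2+\E\bigl|N^{-1}m_sm_s^\intercal-\E N^{-1}m_sm_s^\intercal\bigr|^2 .
\end{equation*}
Small times are handled trivially. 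Since $|\overlap_s|\le1$, the integrand is at most $4$, so the range $s\in[0,\delta]$ contributes at most $4\delta$. We optimize over $\delta$ at the end, aiming for $\delta=(N\varepsilon)^{-1/3}$.

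\emph{Replica part.} Let $C_s:=\mathrm{Cov}(\operatorname{vec}(\sigma_1)\mid\mathcal G_s)$. For each entry $(k,l)$, the quantity $\sum_i\sigma^1_{k i}\sigma^2_{l i}$ is a bilinear form in two conditionally independent vectors. Its conditional variance is therefore bounded by $C_D\bigl(|C_s|^2+|m_s|^2|C_s|_{\mathrm{op}}\bigr)$, and we have $|m_s|^2\le N$. Lemma~\ref{lem:Parisi-Hessian}(3) gives $sC_s\preceq\nabla^2_{\bx}\Phi(s,\bX_s;\bg)$, and both sides are positive semidefinite. Hence $|C_s|\le s^{-1}|\nabla_{\bx}^2\Phi|$. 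Combining this with Lemma~\ref{lem:Parisi-Hessian}(4) and Cauchy--Schwarz, we get
\begin{equation*}
 \int_\delta^1\E\bigl|\overlap_s-N^{-1}m_sm_s^\intercal\bigr|^2\dd s\le C_D\Bigl(\frac{1}{N^2\delta^2}\cdot\frac{N}{\varepsilon}+\frac1{N\delta}\Bigl(\frac{N}{\varepsilon}\Bigr)^{1/2}\Bigr)\le C_D\bigl((N\varepsilon\delta^2)^{-1}+(N\varepsilon)^{-1/2}\delta^{-1}\bigr).
\end{equation*}

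\emph{Disorder part.} This is the main obstacle: we must show that $N^{-1}m_sm_s^\intercal$ concentrates under the randomness of $(\bg,\bz_0,B)$. The map $m\mapsto N^{-1}mm^\intercal$ has gradient of size $O(N^{-1/2})$ on $\{|m|^2\le N\}$. So it suffices to bound $\E|m_s-\E m_s|^2$ at the scale $o(N)$ in an averaged sense, or more precisely $\Var$ of $a\cdot m_s$ with $|a|\le1$ after rescaling.

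I would first try the log-concavity route. The joint convexity of $\Phi(s,\cdot\,;\cdot)$ in $(\bx,\bg)$ from Lemma~\ref{lem:Parisi-Hessian}(1) is the ingredient of~\cite{AronowLopatto2026}. It should make the conditional law of $(\bg,\bX_s)$ log-concave, with Hessian bounded below by the Gaussian part, so that the Brascamp--Lieb inequality applies. Applying Brascamp--Lieb to $F=N^{-1}m_sm_s^\intercal$, which is a function of $(\bx,\bg)$, the variance is controlled by $\E\langle\nabla F,(\text{precision})^{-1}\nabla F\rangle$. Here $\nabla_{\bx}F$ involves $N^{-1}\nabla_{\bx}^2\Phi\, m_s$, whose $L^2$ norm is again controlled by Lemma~\ref{lem:Parisi-Hessian}(4). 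The $\bg$-gradient is handled through the $(\bx,\bg)$ Hessian of the jointly convex $\Phi$, bounded similarly.

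As a fallback, I would use the martingale identity~\eqref{eq:Parisi-gradient-Ito}. It expresses $m_s-m_0$ as a stochastic integral of $\nabla^2_{\bx}\Phi$, so its fluctuations are controlled by the same Hessian energy, with $m_0$ treated via Gaussian Poincaré in $(\bg,\bz_0)$. Either route should produce a bound of the form $C_D\bigl((N\varepsilon)^{-1/2}\delta^{-1}+(N\varepsilon\delta^2)^{-1}\bigr)$ on $[\delta,1]$.

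Summing the three contributions gives $4\delta+C_D\bigl((N\varepsilon)^{-1}\delta^{-2}+(N\varepsilon)^{-1/2}\delta^{-1}\bigr)$. Choosing $\delta=(N\varepsilon)^{-1/3}$ yields~\eqref{eq:total-concentration} when $N\varepsilon\ge1$. The case $N\varepsilon<1$ is trivial, since the left-hand side is at most $4$.
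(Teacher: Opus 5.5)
Your split into the thermal part and the fluctuation of $\E[\overlap_s\mid\mathcal G_s]=N^{-1}m_sm_s^\intercal$ matches the paper. Your replica part is correct apart from one slip. Bounding $s^{-1}\le\delta^{-1}$ before Cauchy--Schwarz gives $(N\varepsilon)^{-1/2}\delta^{-1}$, which is $(N\varepsilon)^{-1/6}$ at $\delta=(N\varepsilon)^{-1/3}$. Keeping $s^{-2}$ inside the time integral gives $(N\varepsilon\delta)^{-1/2}$ instead, which is what you need.

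The genuine gap is the disorder part. You do not prove it, and neither suggested route works.
\begin{itemize}
\item \emph{Fallback.} It rests on a false reduction. By permutation symmetry, $\E m_s=\E\sigma_1$ has every column equal to $\bar x:=\int x\,\nu(\mathrm dx)$. Hence $\E|m_s-\E m_s|^2=N(\tr\E\overlap_s-|\bar x|^2)$, which is of order $N$ as soon as the overlap at level $s$ exceeds its trivial value $\bar x\bar x^\intercal$. The direction of $m_s$ is random; only its Gram matrix can concentrate. The It\^o identity gives only $\E|m_s-m_0|^2\le N$.
\item \emph{Gaussian Poincar\'e in the inputs.} It gives no smallness. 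The $\bg$-derivative of $m_s$ contains the mixed Hessian $\nabla_{\bg}\nabla_{\bx}\Phi$, which is controlled only through $\nabla^2_{\bg}\Phi$. Since $|\mathbf h_N|^2\le NM_0$, the latter can have norm of order $tNM_0$, so you get only an $O(1)$ bound.
\item \emph{Log-concavity route.} It uses the wrong measure. Relative to the Gaussian reference, the inputs up to time $s$ have density $\exp\bigl(s\Phi(s,\bY_s;\bg)-\int_0^s\Phi(r,\bY_r;\bg)\dd r\bigr)$. The convex term $s\Phi(s,\bY_s;\bg)$ enters with the wrong sign, so the potential is not bounded below by the Gaussian part.
\item \emph{Single-time functional.} Even on a good measure, the gradient of $N^{-1}m_sm_s^\intercal$ involves the Hessian of $\Phi$ at the single time $s$. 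No time-integrated precision dominates that.
\end{itemize}

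You are missing three ideas.
\begin{itemize}
\item \emph{Condition on the terminal spin.} Work in reference coordinates $\zeta$, in which $\bY$ and $\bg$ are affine. Girsanov shows that the law of the Gaussian inputs given $\sigma_1=\sigma$ has density proportional to $\exp\bigl(\sqrt{2t}\,\mathbf h_N(\sigma)\cdot\bg+\bY_1\cdot\sigma-\int_0^1\Phi(r,\bY_r;\bg)\dd r\bigr)$. The term $\Phi(1,\bY_1;\bg)$ cancels against the Gibbs weight. Joint convexity then gives potential Hessian $I+\int_0^1\nabla^2_\zeta[\Phi(r,\bY_r;\bg)]\dd r$.
\item \emph{Use a time-averaged observable.} To match this time-integrated precision, fix a unit $a\in S^D$ and replace the target by $U_s=\frac1{N\delta}\int_{s-\delta}^s a\cdot(\sigma_1m_r^\intercal-\frac12m_rm_r^\intercal)\dd r$. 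Its $\mathcal G_s$-conditional mean differs from $\frac12a\cdot N^{-1}m_sm_s^\intercal$ by $\frac1{2N\delta}\int_{s-\delta}^s a\cdot(m_s-m_r)(m_s-m_r)^\intercal\dd r$. By orthogonality of martingale increments, this is $O(\delta)$ in integrated mean square; that window error is where the paper's $\delta$ comes from. Cauchy--Schwarz for the positive semidefinite joint Hessian gives $|w\cdot\nabla_\zeta U_s|^2\le b_s\,w^\intercal\bigl(I+\int_0^1\nabla^2_\zeta[\Phi(r,\bY_r;\bg)]\dd r\bigr)w$, with $b_s$ a window average of $\nabla^2_{\bx}\Phi$-quadratic forms. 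Brascamp--Lieb then yields $\Var(U_s\mid\sigma_1)\le\E[b_s\mid\sigma_1]$, whose integral over $[\delta,1]$ is at most $(2N\varepsilon\delta^2)^{-1}$.
\item \emph{Use transitivity.} $\Var(U_s)=\E\Var(U_s\mid\sigma_1)$ because $U_s$ is invariant under the joint permutation action and $\mathfrak S_N$ acts transitively on $\Sigma_N(\nu)$.
\end{itemize}
This last step is the only place in the lemma where the type constraint enters. Your sketch never uses that the reference measure is $\lambda_N(\nu)$. If it worked, it would give the same $t$-uniform bound for $\mu^{\otimes N}$. That is exactly the uniformity that fails in the unconstrained approach, for instance when several empirical measures compete, and that the type decomposition is designed to circumvent.
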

\begin{proof}
Since $|\overlap_s|\le1$, the left-hand side of~\eqref{eq:total-concentration} is at most $1$. It therefore suffices to consider $N\varepsilon>1$. Fix $\delta\in(0,1)$.

\smallskip
\noindent\emph{Step 1: Thermal fluctuations.}
For a deterministic $v\in\R^{DN}$, Lemma~\ref{lem:Parisi-Hessian}(3) gives
\begin{equation}\label{eq:linear-thermal-variance}
 \Var(v\cdot\operatorname{vec}(\sigma_1)\mid\mathcal G_s)
 =v^\intercal \mathrm{Cov}(\operatorname{vec}(\sigma_1)\mid\mathcal G_s)v\le\frac1s v^\intercal \nabla_{\bx}^2\Phi(s,\bX_s;\bg)v
 \le\frac{|v|^2}{s}\bigl|\nabla_{\bx}^2\Phi(s,\bX_s;\bg)\bigr|.
\end{equation}
With one replica fixed at $\tau$, each entry of $\overlap_s$ is a linear form in the other replica. The sum of the squared coefficient norms is
\begin{equation}\label{eq:coefficient-sum}
 \frac1{N^2}\sum_{\alpha,\beta=1}^D\sum_{i=1}^N\tau_{\alpha i}^2
 =\frac D{N^2}|\tau|^2\le\frac DN.
\end{equation}
Conditional variance tensorization over the two replicas, followed by summation over the entries, therefore gives
\begin{equation}\label{eq:thermal-pointwise}
 \E[|\overlap_s-\E[\overlap_s\mid\mathcal G_s]|^2\mid\mathcal G_s]
 \le\frac{2D}{Ns}\bigl|\nabla_{\bx}^2\Phi(s,\bX_s;\bg)\bigr|.
\end{equation}
Taking expectations, integrating, and applying Cauchy--Schwarz and Lemma~\ref{lem:Parisi-Hessian}(4), we obtain
\begin{align}\label{eq:thermal-concentration}
 \int_\delta^1\E|\overlap_s-\E[\overlap_s\mid\mathcal G_s]|^2\dd s
 &\le\frac{2D}{N}
 \left(\int_\delta^1s^{-2}\dd s\right)^{1/2}
 \left(\int_\delta^1\E\bigl|\nabla_{\bx}^2\Phi(s,\bX_s;\bg)\bigr|^2\dd s\right)^{1/2}
 \le\frac{C_D}{\sqrt{N\varepsilon\delta}}.
\end{align}

\smallskip
\noindent\emph{Step 2: Concentration of the conditional mean.}

Fix $a\in S^D$ with $|a|=1$ and let $T_a:=I_N\otimes a$, a self-adjoint contraction in column-stacked coordinates.

For $s\in(\delta,1]$, define
\begin{equation}\label{eq:U-definition}
 U_s:=\frac1{N\delta}\int_{s-\delta}^s
 a\cdot\left(\sigma_1 \E[\sigma_1\mid\mathcal G_r]^\intercal-\frac12\E[\sigma_1\mid\mathcal G_r]\E[\sigma_1\mid\mathcal G_r]^\intercal\right)\dd r.
\end{equation}
Since $\E[\sigma_1\mid\mathcal G_r]$ is $\mathcal G_s$-measurable for $r\le s$, the tower property and symmetry of $a$ give
\begin{equation}\label{eq:A-from-U}
 a\cdot \E[\overlap_s\mid\mathcal G_s]-2\E[U_s\mid\mathcal G_s]
 =\frac1{N\delta}\int_{s-\delta}^s
 a\cdot\bigl((\E[\sigma_1\mid\mathcal G_s]-\E[\sigma_1\mid\mathcal G_r])(\E[\sigma_1\mid\mathcal G_s]-\E[\sigma_1\mid\mathcal G_r])^\intercal\bigr)\dd r.
\end{equation}
The martingale identity from Lemma~\ref{lem:Parisi-Hessian}(2), 
\[\E|\E[\sigma_1\mid\mathcal G_s]-\E[\sigma_1\mid\mathcal G_r]|^2=\E|\E[\sigma_1\mid\mathcal G_s]|^2-\E|\E[\sigma_1\mid\mathcal G_r]|^2,\]  
the bounds $0\le\E|\E[\sigma_1\mid\mathcal G_r]|^2\le N$ and $|\E[\sigma_1\mid\mathcal G_s]-\E[\sigma_1\mid\mathcal G_r]|^2\le4N$, and time translation imply
\begin{align}\label{eq:integrated-martingale-increments}
 \int_{\delta}^1\E|a\cdot \E[\overlap_s\mid\mathcal G_s]-2\E[U_s\mid\mathcal G_s]|^2\dd s
 &\le\frac4{N\delta}\int_0^\delta\int_{\delta}^1
 \bigl(\E|\E[\sigma_1\mid\mathcal G_s]|^2-\E|\E[\sigma_1\mid\mathcal G_{s-v}]|^2\bigr)\dd s\dd v\nonumber\\
 &\le\frac4\delta\int_0^\delta v\dd v=2\delta.
\end{align}

Equation~\eqref{eq:orbit-equivariance} and invariance of $\lambda_N(\nu)$ give the terminal equivariance of $\Phi$. Uniqueness for the PDE then gives
\begin{equation}\label{eq:Phi-equivariance}
 \Phi(r,\bx^\pi;Q_\pi \bg)=\Phi(r,\bx;\bg),\qquad \pi\in\mathfrak S_N.
\end{equation}

Consequently the joint law is invariant under
\begin{equation}\label{e.(g,z,B,sigma)mapsto}
 (\bg,\bz_0,B,\sigma_1)\longmapsto(Q_\pi \bg,\bz_0^\pi,B^\pi,\sigma_1^\pi).
\end{equation}
The Gaussian inputs keep their laws, the SDE solution is column-permuted, and the terminal Gibbs law is carried to its corresponding permutation. Thus $\E[\sigma_1\mid\mathcal G_r]$ is also column-permuted and $U_s$ is unchanged. Since $\mathfrak S_N$ acts transitively on $\Sigma_N(\nu)$, $\E[U_s\mid\sigma_1]$ is constant, so
\begin{equation}\label{eq:conditional-centering}
 \Var(U_s)=\E\Var(U_s\mid\sigma_1).
\end{equation}
Centering~\eqref{eq:A-from-U}, applying Jensen's inequality, and using $|\E[\overlap_s\mid\mathcal G_s]|\le1$ on $[0,\delta]$ yield
\begin{equation}\label{eq:mean-average-reduction}
 \int_0^1\Var(a\cdot \E[\overlap_s\mid\mathcal G_s])\dd s
 \le C\delta+C\int_{\delta}^1\E\Var(U_s\mid\sigma_1)\dd s.
\end{equation}

We next bound the conditional variance in~\eqref{eq:mean-average-reduction}. 
Let $\Gamma$ be the product law of the independent inputs $(\bg,\bz_0,B)$, and put
\begin{equation*}
 \bY_r:=\sqrt{2q(0)}\bz_0+\int_0^r\sqrt{2\dot q(u)}\,\mathrm dB_u.
\end{equation*}
It\^o's formula for $r\Phi(r,\bY_r;\bg)$ identifies
\begin{equation}\label{eq:Girsanov-density}
 \exp\left(\Phi(1,\bY_1;\bg)-\int_0^1\Phi(r,\bY_r;\bg)\dd r\right)
\end{equation}
as the stochastic exponential with integrand $r\sqrt{2\dot q(r)}\nabla_{\bx}\Phi(r,\bY_r;\bg)$. The bounded gradient satisfies Novikov's condition~\cite[Chapter 3, Corollary 5.13]{karatzas1991brownian}, and Girsanov's theorem identifies $(\bg,\bz_0,\bY)$ under this change of measure with $(\bg,\bz_0,\bX)$ in~\eqref{eq:diffusion}. Multiplying by the terminal Gibbs mass at $\sigma\in\Sigma_N(\nu)$ cancels $\exp(\Phi(1,\bY_1;\bg))$. Hence the conditional law $\P_\sigma$ of the reference inputs given $\sigma_1=\sigma$ has density
\begin{equation}\label{eq:conditional-density}
 \frac{\mathrm d\P_\sigma}{\mathrm d\Gamma}
 \propto\exp\left(
 \sqrt{2t}\,\mathbf h_N(\sigma)\cdot \bg+\bY_1\cdot\sigma
 -\int_0^1\Phi(r,\bY_r;\bg)\dd r\right).
\end{equation}
The normalizing constant is positive and finite; terms depending only on $\sigma$ are absorbed into it. In these reference coordinates, evaluate $U_s$ by representing $\E[\sigma_1\mid\mathcal G_r]$ as $\nabla_{\bx}\Phi(r,\bY_r;\bg)$ using Lemma~\ref{lem:Parisi-Hessian}(2); the change of measure identifies this with the original conditional mean.

Choose an orthonormal basis of $L^2([0,1];\R^{D\times N})$ and write $(\eta_k)_{k\in\N}$ for the corresponding independent standard Gaussian coordinates of the Brownian noise. Put $\mathcal F_n:=\sigma(\eta_{n+1},\eta_{n+2},\ldots)$.
Let $\zeta$ collect $\bg,\bz_0,\eta_1,\ldots,\eta_n$, and condition on $\mathcal F_n$. Under $\Gamma$, $\zeta$ is standard Gaussian and $(\bY_r,\bg)$ is affine in $\zeta$. Under $\P_\sigma$, its conditional Lebesgue density is proportional to $\exp(-V_{\sigma,n})$, where
\begin{equation*}
 V_{\sigma,n}(\zeta)=\frac12|\zeta|^2-\sqrt{2t}\,\mathbf h_N(\sigma)\cdot \bg-\bY_1\cdot\sigma
 +\int_0^1\Phi(r,\bY_r;\bg)\dd r.
\end{equation*}
The middle terms are affine, and Lemma~\ref{lem:Parisi-Hessian}(1) gives
\begin{equation}\label{eq:conditional-potential-Hessian}
 \nabla_\zeta^2V_{\sigma,n}
 =I+\int_0^1\nabla_\zeta^2[\Phi(r,\bY_r;\bg)]\dd r\succeq I.
\end{equation}

For fixed $\sigma$, the derivative of $a\cdot(\sigma\E[\sigma_1\mid\mathcal G_r]^\intercal-\frac12\E[\sigma_1\mid\mathcal G_r]\E[\sigma_1\mid\mathcal G_r]^\intercal)$ with respect to its conditional-mean argument is $a(\sigma-\E[\sigma_1\mid\mathcal G_r])$. Cauchy--Schwarz for the joint Hessian of $\Phi$ and then in time therefore gives, for every $\zeta$-direction $w$,
\begin{equation}\label{eq:gradient-bar-U}
 \begin{gathered}
 |w\cdot\nabla_\zeta U_s|^2\le b_s\,w^\intercal (\nabla_\zeta^2V_{\sigma,n})w,\\
 b_s:=\frac1{N^2\delta^2}\int_{s-\delta}^s
 \operatorname{vec}(a(\sigma-\E[\sigma_1\mid\mathcal G_r]))^\intercal \nabla_{\bx}^2\Phi(r,\bY_r;\bg)\operatorname{vec}(a(\sigma-\E[\sigma_1\mid\mathcal G_r]))\dd r.
 \end{gathered}
\end{equation}
Indeed, the first quadratic factor is $w^\intercal\nabla_\zeta^2[\Phi(r,\bY_r;\bg)]w$, whose time integral is bounded by $w^\intercal (\nabla_\zeta^2V_{\sigma,n})w$. Taking $w=(\nabla_\zeta^2V_{\sigma,n})^{-1}\nabla_\zeta U_s$ and applying the Brascamp--Lieb variance inequality~\cite[Section 4]{brascampLieb1976} give
\begin{equation}\label{eq:BL-finite}
 \Var_{\P_\sigma}(U_s\mid\mathcal F_n)
 \le\E_{\P_\sigma}[\nabla_\zeta U_s^\intercal (\nabla_\zeta^2V_{\sigma,n})^{-1}\nabla_\zeta U_s\mid\mathcal F_n]
 \le\E_{\P_\sigma}[b_s\mid\mathcal F_n].
\end{equation}
Taking $\P_\sigma$-expectations first and using total variance yields
\begin{equation*}
 \Var_{\P_\sigma}(U_s)
 -\Var_{\P_\sigma}\bigl(\E_{\P_\sigma}[U_s\mid\mathcal F_n]\bigr)
 \le\E_{\P_\sigma}b_s.
\end{equation*}
The tail sigma-algebra is trivial under $\Gamma$ and hence under the equivalent measure $\P_\sigma$. 
Reverse martingale convergence
(see Theorem 4.7.3~in~\cite{durrett2019probability}) gives convergence in $L^1(\P_\sigma)$. Since $U_s$ and its conditional expectations are uniformly bounded, squared differences are bounded by a constant times absolute differences, so convergence also holds in $L^2(\P_\sigma)$ and the second variance vanishes. The right-hand side is independent of $n$ and integrable, since the spatial Hessian is bounded. Returning to the original diffusion $\bX$, with $\bY$ replaced by $\bX$ in $b_s$, gives
\begin{equation}\label{eq:BL-conditional}
 \Var(U_s\mid\sigma_1=\sigma)\le\E[b_s\mid\sigma_1=\sigma].
\end{equation}
Conditioning the quadratic integrand on $\mathcal G_r$ gives
\begin{equation}\label{eq:quadratic-covariance-bound}
 \begin{aligned}
 &\E[\operatorname{vec}(a(\sigma_1-\E[\sigma_1\mid\mathcal G_r]))^\intercal \nabla_{\bx}^2\Phi(r,\bX_r;\bg)\operatorname{vec}(a(\sigma_1-\E[\sigma_1\mid\mathcal G_r]))\mid\mathcal G_r]\\
 &=\tr(\nabla_{\bx}^2\Phi(r,\bX_r;\bg)T_a\mathrm{Cov}(\operatorname{vec}(\sigma_1)\mid\mathcal G_r)T_a)\le\frac1r\bigl|\nabla_{\bx}^2\Phi(r,\bX_r;\bg)\bigr|^2.
 \end{aligned}
\end{equation}
Here Lemma~\ref{lem:Parisi-Hessian}(3) gives
\begin{equation*}
 \mathrm{Cov}(\operatorname{vec}(\sigma_1)\mid\mathcal G_r)
 \preceq r^{-1}\nabla_{\bx}^2\Phi(r,\bX_r;\bg).
\end{equation*}
Since $T_a$ is a contraction,
\begin{align*}
 \tr(\nabla_{\bx}^2\Phi(r,\bX_r;\bg)T_a\nabla_{\bx}^2\Phi(r,\bX_r;\bg)T_a)
 &\le|\nabla_{\bx}^2\Phi(r,\bX_r;\bg)|\,|T_a\nabla_{\bx}^2\Phi(r,\bX_r;\bg)T_a|\\
 &\le\bigl|\nabla_{\bx}^2\Phi(r,\bX_r;\bg)\bigr|^2.
\end{align*}
Averaging~\eqref{eq:BL-conditional} over $\sigma$, applying Fubini and using Lemma~\ref{lem:Parisi-Hessian}(4) yield
\begin{align}\label{eq:U-variance}
 \int_{\delta}^1\E\Var(U_s\mid\sigma_1)\dd s
 &\le\frac1{N^2\delta^2}\int_{\delta}^1\int_{s-\delta}^s\frac1r\E\bigl|\nabla_{\bx}^2\Phi(r,\bX_r;\bg)\bigr|^2\dd r\dd s
 \nonumber \\
 &\le\frac1{N^2\delta^2}\int_0^1\frac{\min\{r,\delta\}}r\E\bigl|\nabla_{\bx}^2\Phi(r,\bX_r;\bg)\bigr|^2\dd r
 \nonumber \\
 &\le\frac1{2N\varepsilon\delta^2}.
\end{align}
Here we used $|[\delta,1]\cap[r,r+\delta]|\le\min\{r,\delta\}$ for $r>0$. Substituting into~\eqref{eq:mean-average-reduction} and summing over an orthonormal basis of $S^D$ give
\begin{equation}\label{eq:mean-concentration}
 \int_0^1\E|\E[\overlap_s\mid\mathcal G_s]-\E\overlap_s|^2\dd s
 \le C_D\left(\delta+\frac1{N\varepsilon\delta^2}\right).
\end{equation}

\smallskip
\noindent\emph{Step 3: Combination of the estimates.}
Conditional variance decomposition gives
\begin{equation}\label{eq:overlap-orthogonal}
 \E|\overlap_s-\E\overlap_s|^2=\E|\overlap_s-\E[\overlap_s\mid\mathcal G_s]|^2+\E|\E[\overlap_s\mid\mathcal G_s]-\E\overlap_s|^2.
\end{equation}
Since $|\overlap_s|,|\E[\overlap_s\mid\mathcal G_s]|\le1$, 
\eqref{eq:thermal-concentration} in Step 1 and \eqref{eq:mean-concentration} 
in Step 2
imply
\begin{equation*}
 \int_0^1\E|\overlap_s-\E\overlap_s|^2\dd s
 \le C_D\left(\delta+\frac1{N\varepsilon\delta^2}+\frac1{\sqrt{N\varepsilon\delta}}\right)
 \le \frac{3 C_D}{2}\left(\delta+\frac1{N\varepsilon\delta^2}\right).
\end{equation*}
Taking $\delta=(N\varepsilon)^{-1/3}$ proves~\eqref{eq:total-concentration}.
\end{proof}

\begin{proof}[Proof of Proposition~\ref{prop:residual}]
By Lemma~\ref{lem:diffusion-derivatives} and Taylor's theorem,
\begin{equation}\label{eq:Taylor-residuals}
 \left|\partial_t\bar F_N^\nu(t,q)-\int_0^1\xi(\partial_q\bar F_N^\nu(t,q,s))\dd s\right|
 \le\frac{M_2}{2}\int_0^1\E|\overlap_s-\E\overlap_s|^2\dd s.
\end{equation}
Lemma~\ref{lem:overlap-concentration} now gives~\eqref{eq:residual}.
\end{proof}
\subsection{A finite-union estimate}

We now allow arbitrary $N\in\N$, $t\ge0$, and $q\in\cQ_\infty$.

For a nonzero finite reference measure $\rho$ on $(\supp\mu)^N$, define $\Lambda_{\bg}$ with $\rho$ in place of $\lambda_N(\nu)$. For smooth increasing $q$, the representation~\eqref{e.F_N=-1/NEPhi} holds with $\bar F_{N,\rho}$ in place of $\bar F_N^\nu$, and we have
\begin{equation}\label{eq:diffusion-gradient-bounds}
 |\nabla_{\bx_{\cdot,i}}\Phi|\le1,
 \qquad |\nabla_{\bg}\Phi|^2\le2t\sup_{\sigma\in (\supp\mu)^N}|\mathbf h_N(\sigma)|^2\le2tNM_0.
\end{equation}
Indeed, a directional derivative in $\bx$ or $\bg$ solves the linear backward equation obtained by differentiating~\eqref{eq:Parisi-PDE}. Its terminal value is the Gibbs average of the corresponding linear observable. The maximum principle preserves these terminal bounds. For degenerate $q$, apply the argument to $q(s)+\gamma sI_D$ and let $\gamma\downarrow0$ using the cascade representation and Lemma~\ref{lem:regularity}. 

The following estimate compares the full free energy with the minimum over empirical measures and is adapted from~\cite[Proof of Lemma 3.4, equations (3.17)--(3.18)]{ho2026convergence}.

\begin{lemma}\label{lem:type-minimum}
Suppose that $\mu$ has finite support. For $T,Q\in\R_+$, put $C:=TM_0+(1+\sqrt D)Q$. For every $N\in\N$, $t\in[0,T]$, $q\in\cQ_\infty$ with $|q|_{L^\infty}\le Q$, and $s_0\in(0,1/2]$, we have
\begin{equation}\label{eq:type-minimum}
 0\le\min_{\nu\in\Delta_{m,N}}\bar F_{N,\mu^{\otimes N},\Sigma_N(\nu)}(t,q)-\bar F_{N,\mu^{\otimes N}}(t,q)
 \le Cs_0+\frac{m\log(N+1)}{Ns_0}.
\end{equation}
\end{lemma}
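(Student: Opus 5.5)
The lower bound in~\eqref{eq:type-minimum} is elementary. Since $(\supp\mu)^N$ is the disjoint union of the sets $\Sigma_N(\nu)$ over $\nu\in\Delta_{m,N}$, the enriched partition function of $\mu^{\otimes N}$ is the sum over $\nu$ of the constrained partition functions appearing in~\eqref{eq:constrained-free-energy}. It therefore dominates each of them pointwise, realization by realization. Taking $-\frac1N\E\log$ gives $\bar F_{N,\mu^{\otimes N}}\le\bar F_{N,\mu^{\otimes N},\Sigma_N(\nu)}$ for every $\nu$, hence the left inequality. By Lemma~\ref{lem:regularity} it suffices to argue for smooth strictly increasing $q$ and then pass to general $q\in\cQ_\infty$ by $L^1$-continuity.

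For the upper bound, the difficulty is that the naive estimate $Z\le|\Delta_{m,N}|\max_\nu Z_\nu$ produces $\E\log\max_\nu Z_\nu$, and we cannot exchange this with $\max_\nu\E\log Z_\nu$ without a uniform concentration estimate. Instead I would exploit the fractional-moment structure of the cascade at level $s_0$. I plan to introduce a modified free energy $\widetilde F_{N,\rho}$, defined for a reference measure $\rho$ on $(\supp\mu)^N$, in which all the ``outer'' randomness is averaged with exponent $s_0$. This outer randomness consists of the Hamiltonian disorder $\bg$, the initial Gaussian $\bz_0$, and the Brownian increments on $[0,s_0]$. Concretely, $\widetilde F_{N,\rho}$ is $-\frac1{N}$ times the Parisi-PDE value obtained by freezing the nonlinear coefficient $s$ at $s_0$ on $[0,s_0]$, with $\bg$ moved inside this layer. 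This gives the representation
\[
\widetilde F_{N,\rho}=-\frac1{Ns_0}\log\E\exp\bigl(s_0\,\Phi_\rho(s_0,\bX_{s_0};\bg)\bigr),
\]
where the expectation is over $(\bg,\bz_0,B|_{[0,s_0]})$.

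The key point is that $\exp(\Phi_\rho(s_0,\cdot))$ is additive in $\rho$. The terminal condition is linear in $\rho$, and on $[s_0,1]$ the PDE is the cascade recursion, whose exponentiated value is an expectation of a power with exponents $r\ge s_0$. I would therefore work directly with the nested cascade expression, which is additive in $\rho$ at the innermost level, and use $(\sum_\nu x_\nu)^{r}\le\sum_\nu x_\nu^{r}$ for $r\le1$ level by level, down to the outer exponent $s_0$. This yields
\[
\E\Bigl(\sum_\nu Z_\nu\Bigr)^{s_0}\le\sum_\nu\E Z_\nu^{s_0}\le|\Delta_{m,N}|\max_\nu\E Z_\nu^{s_0}.
\]
Taking $-\frac1{Ns_0}\log$ and using $|\Delta_{m,N}|\le(N+1)^m$ gives
\[
\widetilde F_{N,\mu^{\otimes N}}\ge\min_\nu\widetilde F_{N,\1_{\Sigma_N(\nu)}\mu^{\otimes N}}-\frac{m\log(N+1)}{Ns_0}.
\]

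It remains to show that $|\widetilde F_{N,\rho}-\bar F_{N,\rho}|\le Cs_0$ uniformly in $\rho$, and I expect this comparison to be the main obstacle. I would interpolate between the two models via the Parisi PDE on $[0,s_0]$, differentiating in the coefficient that is changed from $s$ to $s_0$. The resulting derivative involves the quadratic terms $|\nabla_\bg\Phi|^2$ and $\dot q\cdot\nabla_\bx\Phi(\nabla_\bx\Phi)^\intercal$, integrated over an interval of length $s_0$. By~\eqref{eq:diffusion-gradient-bounds}, the first is at most $2tNM_0$, which contributes $TM_0s_0$ after dividing by $N$. The second, using $|\nabla_{\bx_{\cdot,i}}\Phi|\le1$, is controlled by $\int_0^{s_0}|\dot q|$. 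The initial variance $q(0)$ is handled similarly through the $\bz_0$ term, and the norm conversion between $|\cdot|$ and the trace on $S^D$ costs the factor $\sqrt D$, giving the $(1+\sqrt D)Q\,s_0$ term. Carrying this out carefully, so that the bound does not depend on $\rho$ or $N$ and so that the fractional-moment inequality is valid level by level, is where I expect the technical work to lie. Combining the comparison with the subadditivity step yields~\eqref{eq:type-minimum}.
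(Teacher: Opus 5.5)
Your proposal is correct and is essentially the paper's proof. Your $\widetilde F_{N,\rho}$ is the intermediate quantity $-\frac1{Ns_0}\log\E e^{s_0\Phi_\rho(s_0,\bY_{s_0};\bg)}$ used there, and it should be evaluated at the driftless Gaussian $\bY_{s_0}$, not the diffusion $\bX_{s_0}$. Your level-by-level use of $(\sum_\nu x_\nu)^r\le\sum_\nu x_\nu^r$ is the step-path version of the paper's maximum-principle comparison $\Phi\le\widehat\Phi$ on $[s_0,1]$, which is the convenient form once you have reduced to smooth $q$. Your coefficient interpolation on $[0,s_0]$ merges the It\^o short-time bound~\eqref{eq:type-short-time} and the Gaussian-concentration step into a single step.

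There are two small fixes. First, $e^{\Phi_\rho(s_0,\cdot)}$ is not additive in $\rho$; only its terminal value at $s=1$ is. What your argument actually proves, and all you need, is $e^{s_0\Phi_{\mu^{\otimes N}}(s_0,\cdot)}\le\sum_\nu e^{s_0\Phi_\nu(s_0,\cdot)}$. Second, a two-sided bound $|\widetilde F_{N,\rho}-\bar F_{N,\rho}|\le Cs_0$ applied on both sides would give $2Cs_0$ rather than $Cs_0$. Use the sign $\widetilde F_{N,\rho}\le\bar F_{N,\rho}$, which holds because the coefficient derivative is nonnegative, so that the $O(s_0)$ cost is paid only for the constrained measures.
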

\begin{proof}
Restriction decreases the partition function, which proves the first inequality. By smooth approximation and Lemma~\ref{lem:regularity}, it suffices to prove the second inequality for smooth $q$.

In this proof, $\Phi$ and $\Phi_\nu$ denote the solutions of~\eqref{eq:Parisi-PDE} with reference measures $\mu^{\otimes N}$ and $\1_{\Sigma_N(\nu)}\mu^{\otimes N}$, respectively, using the Gaussian process fixed in~\eqref{eq:orbit-realization}. The restrictions retain their original masses as in~\eqref{eq:restricted-reference-normalization}. Since the sets $\Sigma_N(\nu)$ partition $(\supp\mu)^N$,
\begin{equation}\label{eq:type-terminal-softmax}
 \Phi(1,\bx;\bg)=\log\sum_{\nu\in\Delta_{m,N}}\exp\bigl(\Phi_\nu(1,\bx;\bg)\bigr).
\end{equation}
The representation~\eqref{e.F_N=-1/NEPhi} gives
\begin{align}\label{eq:type-Parisi-representation}
 \bar F_{N,\mu^{\otimes N}}(t,q)
 &=-\frac1N\E\Phi(0,\sqrt{2q(0)}\bz_0;\bg),\nonumber\\
 \bar F_{N,\mu^{\otimes N},\Sigma_N(\nu)}(t,q)
 &=-\frac1N\E\Phi_\nu(0,\sqrt{2q(0)}\bz_0;\bg),
\end{align}
where $\bz_0$ is standard Gaussian in $\R^{D\times N}$ and independent of $\bg$.

\smallskip
\noindent\emph{Comparison on $[s_0,1]$.}
For $s\in[s_0,1]$, let
\begin{equation*}
 \widehat\Phi(s,\bx;\bg):=
 \frac1s\log\sum_{\nu\in\Delta_{m,N}}e^{s\Phi_\nu(s,\bx;\bg)},
 \qquad
 p_\nu:=\frac{e^{s\Phi_\nu}}{\sum_{\nu'}e^{s\Phi_{\nu'}}}.
\end{equation*}
We claim that for all $(s,\bx)\in[s_0,1]\times \R^{D\times N}$
\begin{equation}\label{eq:type-softmax-comparison}
 \Phi(s,\bx;\bg)\le\widehat\Phi(s,\bx;\bg).
\end{equation}
This follows from the following standard maximum principle argument.

We have $\nabla_{\bx}\widehat\Phi=\sum_\nu p_\nu\nabla_{\bx}\Phi_\nu$, and differentiation gives
\begin{align*}
 \partial_s\widehat\Phi
 &=\sum_\nu p_\nu\partial_s\Phi_\nu+\frac1{s^2}\sum_\nu p_\nu\log p_\nu,\\
 \sum_{i=1}^N\dot q(s)\cdot\nabla^2_{\bx_{\cdot,i}}\widehat\Phi
 &=\sum_\nu p_\nu\sum_{i=1}^N\dot q(s)\cdot\nabla^2_{\bx_{\cdot,i}}\Phi_\nu
 +s\dot q(s)\cdot\Bigl(\sum_\nu p_\nu\nabla_{\bx}\Phi_\nu(\nabla_{\bx}\Phi_\nu)^\intercal
 -\nabla_{\bx}\widehat\Phi(\nabla_{\bx}\widehat\Phi)^\intercal\Bigr).
\end{align*}
Substituting the equations for $\Phi_\nu$, the quadratic terms cancel:
\begin{equation}\label{eq:type-softmax-PDE}
 \partial_s\widehat\Phi
 +\sum_{i=1}^N\dot q(s)\cdot\nabla^2_{\bx_{\cdot,i}}\widehat\Phi
 +s\dot q(s)\cdot\nabla_{\bx}\widehat\Phi(\nabla_{\bx}\widehat\Phi)^\intercal
 =\frac1{s^2}\sum_\nu p_\nu\log p_\nu\le0.
\end{equation}
Let $\widetilde\Phi:=\widehat\Phi-\Phi$. By~\eqref{eq:type-terminal-softmax}, 
the terminal condition equals $\widetilde\Phi(1,\cdot\,;\bg)=0$. Subtracting the equation for $\Phi$ from~\eqref{eq:type-softmax-PDE} gives
\begin{equation*}
 \partial_s\widetilde\Phi
 +\sum_{i=1}^N\dot q(s)\cdot\nabla^2_{\bx_{\cdot,i}}\widetilde\Phi
 +\sum_{i=1}^N b_i\cdot\nabla_{\bx_{\cdot,i}}\widetilde\Phi\le0,
 \qquad
 b_i:=s\dot q(s)\bigl(\nabla_{\bx_{\cdot,i}}\widehat\Phi+\nabla_{\bx_{\cdot,i}}\Phi\bigr).
\end{equation*}
After the change of variables $\tau=1-s$, the parabolic maximum principle, together with the zero terminal value of $\widetilde\Phi$, gives $\widetilde\Phi\ge0$ on $[s_0,1]$.
This proves \eqref{eq:type-softmax-comparison}.

\smallskip
\noindent\emph{Control of $[0,s_0]$.}
Let $B$ be a standard Brownian motion in $\R^{D\times N}$ independent of $(\bg,\bz_0)$, and let
\begin{equation*}
 \bY_s:=\sqrt{2q(0)}\bz_0+\int_0^s\sqrt{2\dot q(r)}\,\mathrm dB_r.
\end{equation*}
For $\Psi\in\{\Phi\}\cup\{\Phi_\nu:\nu\in\Delta_{m,N}\}$, It\^o's formula, the Parisi PDE, and the column bound in~\eqref{eq:diffusion-gradient-bounds} give
\begin{align}\label{eq:type-short-time}
 0&\le\E\Psi(0,\bY_0;\bg)-\E\Psi(s_0,\bY_{s_0};\bg)\nonumber\\
 &=\E\int_0^{s_0} r\dot q(r)\cdot\nabla_{\bx}\Psi(r,\bY_r;\bg)\bigl(\nabla_{\bx}\Psi(r,\bY_r;\bg)\bigr)^\intercal\,\mathrm dr
 \le Ns_0\operatorname{tr}q(1)\le Ns_0\sqrt D\,Q.
\end{align}
Moreover, $\bY_{s_0}$ has the same law as $\sqrt{2q(s_0)}\bz$, with $\bz$ standard Gaussian and independent of $\bg$.

\smallskip
\noindent\emph{Gaussian concentration.}
By~\eqref{eq:diffusion-gradient-bounds} and the chain rule,
\begin{equation*}
 \left|\nabla_{(\bg,\bz)}\bigl[\Phi_\nu(s_0,\sqrt{2q(s_0)}\bz;\bg)\bigr]\right|^2
 \le2tNM_0+2N\|q(s_0)\|_{\mathrm{op}}
 \le2N(TM_0+Q).
\end{equation*}
Gaussian concentration therefore gives
\begin{align*}
 \log\E e^{s_0\Phi_\nu(s_0,\sqrt{2q(s_0)}\bz;\bg)}
 \le s_0\E\Phi_\nu(s_0,\sqrt{2q(s_0)}\bz;\bg)
 +Ns_0^2(TM_0+Q).
\end{align*}
By Jensen's inequality,
\begin{align}\label{eq:type-random-softmax}
 \E\widehat\Phi(s_0,\bY_{s_0};\bg)
 &\le\frac1{s_0}\log\sum_{\nu\in\Delta_{m,N}}\E e^{s_0\Phi_\nu(s_0,\sqrt{2q(s_0)}\bz;\bg)}\nonumber\\
 &\le\max_{\nu\in\Delta_{m,N}}\E\Phi_\nu(s_0,\sqrt{2q(s_0)}\bz;\bg)
 +\frac{\log|\Delta_{m,N}|}{s_0}+Ns_0(TM_0+Q).
\end{align}
Combining~\eqref{eq:type-softmax-comparison}--\eqref{eq:type-random-softmax}, and using $\E\Phi_\nu(s_0,\sqrt{2q(s_0)}\bz;\bg)\le\E\Phi_\nu(0,\bY_0;\bg)$ from~\eqref{eq:type-short-time}, we obtain
\begin{equation}\label{eq:type-Parisi-max}
 \E\Phi(0,\bY_0;\bg)
 \le\max_{\nu\in\Delta_{m,N}}\E\Phi_\nu(0,\bY_0;\bg)
 +\frac{\log|\Delta_{m,N}|}{s_0}
 +Ns_0\bigl(TM_0+(1+\sqrt D)Q\bigr).
\end{equation}
Together with~\eqref{eq:type-Parisi-representation} and $|\Delta_{m,N}|\le(N+1)^m$, this proves the second inequality in~\eqref{eq:type-minimum}.
\end{proof}

\section{Initial conditions for finite spin sets}\label{sec:finite-initial}

In this section, assume $\supp\mu=\{a_1,\ldots,a_m\}$ and retain the notation $\mu_j:=\mu(\{a_j\})>0$ for $j\in\{1,\ldots,m\}$. We equip $\cP(\supp\mu)$ with the $W_1$ topology. Since $\supp\mu$ is finite, this is equivalent to the Euclidean topology under the identification $\nu\mapsto(\nu_1,\ldots,\nu_m)$, where $\nu_j:=\nu(\{a_j\})$. The path $q$ belongs to $\cQ_1$; the Gibbs computations below are first made for $q\in\cQ_\infty$ and then extended by approximation.

For $h\in\R^m$, interpret $h$ as the function on $\supp\mu$ satisfying $h(a_j)=h_j$ for every $j\in\{1,\ldots,m\}$, and write $h\cdot\nu:=\sum_{j=1}^m h_j\nu_j$ for $\nu\in\cP(\supp\mu)$. By~\eqref{eq:initial-definition}, we have $\psi_{\mu,h}(q)=\bar F_{1,e^h\mu}(0,q)$. The same identity holds at every system size:
\begin{equation}\label{e.tensorize}
 \bar F_{N,(e^h\mu)^{\otimes N}}(0,q)=\psi_{\mu,h}(q),
 \qquad N\in\N;
\end{equation}
this is the tensorization property in~\cite[Proposition 3.2]{chen2025free}.

\begin{proposition}\label{prop:finite-initial}
Assume $\supp\mu=\{a_1,\ldots,a_m\}$ for some $m\in\N$.
For every $q\in\cQ_1$, there is a continuous function $g_q:\cP(\supp\mu)\to\R$ such that
\begin{gather}\label{eq:finite-initial-identification}
 g_q(\nu)=\sup_{h\in\R^m}\{\psi_{\mu,h}(q)+h\cdot\nu\}
 =\Ent(\nu\mid\mu)+\varphi^\nu(q),
 \qquad \nu\in\cP(\supp\mu),
\\
\label{eq:finite-initial-convergence}
 \lim_{N\to\infty}\sup_{\nu\in\Delta_{m,N}}
 |\bar F_{N,\mu^{\otimes N},\Sigma_N(\nu)}(0,q)-g_q(\nu)|=0.
\end{gather}
For every $Q\in\R_+$, the convergence in~\eqref{eq:finite-initial-convergence} is uniform over $q\in\cQ_\infty$ satisfying $|q|_{L^\infty}\le Q$. Moreover, $\varphi^\nu$ is $1$-Lipschitz in $L^1$, increasing along $\cQ_2^*$, and satisfies $0\le\varphi^\nu(q)\le|q|_{L^1}$.
\end{proposition}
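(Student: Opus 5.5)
The plan is a large-deviation (Gärtner--Ellis / Legendre duality) argument for the tilted free energy at $t=0$, combined with the finite-union estimate of Lemma~\ref{lem:type-minimum}. First define $\Lambda_q(h):=-\psi_{\mu,h}(q)$. At $t=0$ the Hamiltonian does not couple sites except through the cascade. By~\eqref{e.tensorize} we have $-\bar F_{N,(e^h\mu)^{\otimes N}}(0,q)=\Lambda_q(h)$ for all $N$. On each type class $e^{\sum_i h(\sigma_{\cdot,i})}=e^{Nh\cdot\nu}$, so
\begin{equation*}
 \bar F_{N,(e^h\mu)^{\otimes N},\Sigma_N(\nu)}(0,q)=\bar F_{N,\mu^{\otimes N},\Sigma_N(\nu)}(0,q)-h\cdot\nu .
\end{equation*}
Lemma~\ref{lem:type-minimum}, applied to the finitely supported measure $e^h\mu$ (its proof only uses finiteness of the support, and the constant does not depend on the masses), then gives
\begin{equation*}
 \psi_{\mu,h}(q)=\min_{\nu\in\Delta_{m,N}}\bigl\{G_N(\nu)-h\cdot\nu\bigr\}+o(1),
\end{equation*}
with $G_N(\nu):=\bar F_{N,\mu^{\otimes N},\Sigma_N(\nu)}(0,q)$. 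The error is uniform in $h$ and in $|q|_{L^\infty}\le Q$, after taking $s_0=N^{-1/2}$.

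Next I would establish uniform equicontinuity of $G_N$. By~\eqref{eq:normalization}, $G_N=\bar F_N^\nu-\frac1N\log\mu^{\otimes N}(\Sigma_N(\nu))$. Lemma~\ref{lem:type-continuity} makes the first term $2Q$-Lipschitz in $W_1$. Lemma~\ref{lem:type-probability} shows the second term equals $\Ent(\nu\mid\mu)+O(m\log(N+1)/N)$, which is uniformly continuous on the simplex. Arzelà--Ascoli, plus interpolation of $G_N$ to all of $\cP(\supp\mu)$ via the moduli, then give subsequential uniform limits $g$. Each such limit satisfies $\psi_{\mu,h}(q)=\min_\nu\{g(\nu)-h\cdot\nu\}$, so $-\psi_{\mu,h}(q)=g^*(h)$, the convex conjugate of $g$.

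The key step, and the main obstacle, is showing that every such limit $g$ is convex, so that Fenchel--Moreau gives $g=\sup_h\{\psi_{\mu,h}(q)+h\cdot\nu\}$ independently of the subsequence. That yields both uniqueness of the limit, hence full convergence~\eqref{eq:finite-initial-convergence}, and the first equality in~\eqref{eq:finite-initial-identification}. For convexity I would use subadditivity: at $t=0$, splitting the $N_1+N_2$ sites into two blocks with types $\nu_1,\nu_2$ (restricting to this product of type classes) gives
\begin{equation*}
 (N_1+N_2)\,G_{N_1+N_2}\Bigl(\tfrac{N_1\nu_1+N_2\nu_2}{N_1+N_2}\Bigr)\le N_1G_{N_1}(\nu_1)+N_2G_{N_2}(\nu_2)+o(N_1+N_2).
\end{equation*}
The justification I would try first is a Guerra-type interpolation that decouples the cascade field across the blocks. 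Its sign follows from the $\cQ_2^*$-monotonicity/convexity structure in Lemma~\ref{lem:regularity}; note that with $t=0$ the only interaction is through the shared cascade $\alpha$. Midpoint convexity then passes to the limit. If this sign fails, the fallback is to show directly that the minimizers over $\nu$ exhaust the simplex as $h$ varies, so the limit equals its convex envelope.

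Finally, for the second equality in~\eqref{eq:finite-initial-identification}, I would substitute $h=\log\frac{d\nu}{d\mu}+k$ in the definition of $\varphi^\nu$. Since $\psi_{\nu,k}=\psi_{\mu,\log(d\nu/d\mu)+k}$ and $\int(\log\frac{d\nu}{d\mu}+k)\,d\nu=\Ent(\nu\mid\mu)+\int k\,d\nu$, the two suprema coincide when $\nu\ll\mu$ with full support. Boundary $\nu$ are handled by restricting $h$ to $\supp\nu$ and sending $h\to-\infty$ off it, using continuity. Continuity of $g_q$ follows from it being a uniform limit of equicontinuous functions. The extension from $\cQ_\infty$ to $\cQ_1$ is by the $L^1$-Lipschitz bound of Lemma~\ref{lem:regularity}, and the remaining properties of $\varphi^\nu$ are Lemma~\ref{lem:initial-regularity}.
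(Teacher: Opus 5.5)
Your overall architecture matches the paper's. The shared pieces are the tilted duality $\psi_{\mu,h}(q)=\min_\nu\{g(\nu)-h\cdot\nu\}$ from Lemma~\ref{lem:type-minimum}, asymptotic equicontinuity with subsequential limits, Fenchel--Moreau, and the tilt change $h=\log\frac{\mathrm d\nu}{\mathrm d\mu}+k$ for the second equality. The gap is in the step you flag as the crux, namely convexity of every subsequential limit.

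\emph{First, the justification of your block inequality is wrong.} Lemma~\ref{lem:regularity} contains no convexity statement. Its $\cQ_2^*$-monotonicity concerns variations of the path $q$, so it gives no sign for decoupling a shared cascade between two blocks. The inequality is nevertheless true, even with zero error, for a different reason. At $t=0$ the free energy is exactly additive over block-product reference measures:
$(N_1+N_2)\bar F_{N_1+N_2,\lambda_1\otimes\lambda_2}(0,q)=N_1\bar F_{N_1,\lambda_1}(0,q)+N_2\bar F_{N_2,\lambda_2}(0,q)$.
This follows from the same factorization of the Parisi recursion that gives \eqref{e.tensorize}. Equivalently, the nonlinearity in \eqref{eq:Parisi-PDE} is a sum over columns, so the solution splits over blocks. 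Combined with $\1_{\Sigma_{N_1}(\nu_1)\times\Sigma_{N_2}(\nu_2)}\mu^{\otimes N}\le\1_{\Sigma_N(\nu)}\mu^{\otimes N}$, this yields your display.

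\emph{Second, even granting the inequality, midpoint convexity does not pass to a subsequential limit.} The inequality compares $G_{2n}$ with $G_n$, and these need not converge along the same subsequence. You would first have to run a Fekete-type argument to show that the whole sequence $G_N$ converges. That argument uses $k$ blocks of size $n$ plus a remainder block of size less than $n$, the uniform bound on $G_r$, and the asymptotic equicontinuity. Only then does $G_{2n}(\tfrac{\nu_1+\nu_2}2)\le\tfrac12(G_n(\nu_1)+G_n(\nu_2))$ give convexity of the limit, after which Fenchel--Moreau identifies it. Repaired this way, your route is a valid alternative that proves convergence of the full sequence directly.

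The paper avoids subadditivity and makes your fallback precise. The ingredient you did not name is differentiability in the tilt. By \eqref{eq:psi-h-gradient}, $h\mapsto\psi_{\mu,h}(q)$ is $C^1$. Hence any minimizer of $\nu\mapsto g(\nu)-h\cdot\nu$ must equal $-\nabla_h\psi_{\mu,h}(q)$; this is \eqref{eq:minimizer-gradient}. For $\nu$ in the relative interior, \eqref{eq:psi-h-zero-comparison} makes $h\mapsto\psi_{\mu,h}(q)+h\cdot\nu$ coercive modulo constants. At a maximizer $h_*$ one gets $\nu=-\nabla_h\psi_{\mu,h_*}(q)$, so $\nu$ is the minimizer at $h_*$ and $g(\nu)=\sup_h\{\psi_{\mu,h}(q)+h\cdot\nu\}$. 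This identifies \emph{every} subsequential limit without knowing convergence in advance, using only a soft G\"artner--Ellis-type mechanism. Your route instead relies on exact block additivity, which is special to $t=0$.

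Finally, your sketch does not address uniformity over $\{q\in\cQ_\infty:|q|_{L^\infty}\le Q\}$. Since $g_q$ is identified for each fixed $q$, you need relative $L^1$-compactness of this set (Helly's selection theorem). You also need the uniform $1$-Lipschitz bounds in $q$ for $G_N$ and $g_q$, as in the last step of the paper's proof.
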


\begin{proof}
We first fix $q\in\cQ_\infty$.

\smallskip
\noindent\emph{Step 1: Compactness on $\Delta_{m,N}$ and the dual identity.}
Define $G_{N,q}:\Delta_{m,N}\to\R$ by
\begin{equation*}
 G_{N,q}(\nu):=\bar F_{N,\mu^{\otimes N},\Sigma_N(\nu)}(0,q).
\end{equation*}
Combining~\eqref{eq:normalization}, Lemma~\ref{lem:type-continuity}, and~\eqref{eq:stirling}, we obtain, for $\nu,\nu'\in\Delta_{m,N}$,
\begin{align}\label{eq:finite-equicontinuity}
 |G_{N,q}(\nu)-G_{N,q}(\nu')|
 &\le 2|q|_{L^\infty}W_1(\nu,\nu')
 +|\Ent(\nu\mid\mu)-\Ent(\nu'\mid\mu)|
 +\frac{m\log(N+1)}N.
\end{align}
The map $\nu\mapsto\Ent(\nu\mid\mu)=\sum_{j=1}^m\nu_j\log(\nu_j/\mu_j)$ is continuous on the compact set $\cP(\supp\mu)$, with the convention $0\log0:=0$. Thus~\eqref{eq:finite-equicontinuity} gives a common asymptotic modulus of continuity. Moreover, Lemma~\ref{lem:regularity}, applied to the probability measure $\lambda_N(\nu)$ at $t=0$, gives $|\bar F_N^\nu(0,q)|\le |q|_{L^1}$. Together with~\eqref{eq:normalization} and~\eqref{eq:stirling}, this shows that $(G_{N,q})_{N\in\N}$ is uniformly bounded on its varying domains.

The grids $\Delta_{m,N}$ become dense in $\cP(\supp\mu)$ for $W_1$: rounding the masses of any $\nu\in\cP(\supp\mu)$ to multiples of $N^{-1}$ gives $\nu_N\in\Delta_{m,N}$ with $\lim_{N\to\infty}W_1(\nu_N,\nu)=0$. Consequently, a finite-net and diagonal argument using~\eqref{eq:finite-equicontinuity} shows that every subsequence of $(G_{N,q})$ has a further subsequence, still indexed by $N$, and a continuous function $g_q:\cP(\supp\mu)\to\R$ such that
\begin{equation}\label{eq:finite-subsequence}
 \lim_{N\to\infty}\sup_{\nu\in\Delta_{m,N}}|G_{N,q}(\nu)-g_q(\nu)|=0.
\end{equation}

We next identify $g_q$. Fix $h\in\R^m$ and let
\begin{equation*}
 Z_h:=\sum_{j=1}^m\mu_j e^{h_j}
 \qquad\text{and}\qquad
 \mu_h:=Z_h^{-1}e^h\mu.
\end{equation*}
The measure $\mu_h$ is a probability measure with support $\supp\mu$. For every $\nu\in\Delta_{m,N}$, by~\eqref{eq:type-set}, every $\sigma\in\Sigma_N(\nu)$ has exactly $N\nu_j$ coordinates equal to $a_j$. Hence the density of $\mu_h^{\otimes N}$ relative to $\mu^{\otimes N}$ is constant on $\Sigma_N(\nu)$ and equals $Z_h^{-N}e^{Nh\cdot\nu}$. Therefore,
\begin{equation}\label{eq:tilted-restricted}
 \bar F_{N,\mu_h^{\otimes N},\Sigma_N(\nu)}(0,q)
 =G_{N,q}(\nu)+\log Z_h-h\cdot\nu.
\end{equation}
Equation~\eqref{e.tensorize} also gives
\begin{equation}\label{e.F=psi+logZ}
 \bar F_{N,\mu_h^{\otimes N}}(0,q)=\psi_{\mu,h}(q)+\log Z_h.
\end{equation}
Applying Lemma~\ref{lem:type-minimum} to the reference measure $\mu_h$, with $t=0$ and $s_{0,N}:=\min\{\frac12,\sqrt{\log(N+1)/N}\}$, and using~\eqref{eq:tilted-restricted} and~\eqref{e.F=psi+logZ}, we obtain
\begin{equation*}
 \lim_{N\to\infty}
 \left|
 \min_{\nu\in\Delta_{m,N}}\{G_{N,q}(\nu)-h\cdot\nu\}
 -\psi_{\mu,h}(q)
 \right|=0.
\end{equation*}
Passing along the subsequence in~\eqref{eq:finite-subsequence}, using the density of $\Delta_{m,N}$ in $\cP(\supp\mu)$ and the continuity of $g_q$, gives
\begin{equation}\label{eq:finite-duality}
 \psi_{\mu,h}(q)=\min_{\nu\in\cP(\supp\mu)}\{g_q(\nu)-h\cdot\nu\},
 \qquad h\in\R^m.
\end{equation}

\smallskip
\noindent\emph{Step 2: Identification of $g_q$ by convex duality.}
For bounded $q$, differentiation with respect to $h$ gives
\begin{equation}\label{eq:psi-h-gradient}
 \partial_{h_j}\psi_{\mu,h}(q)
 =-\E\langle\1_{\{\sigma=a_j\}}\rangle,
 \qquad j\in\{1,\ldots,m\}.
\end{equation}
A second differentiation expresses the Hessian as the negative of an expected covariance matrix of the indicators $(\1_{\{\sigma=a_j\}})_{j=1}^m$. Since these indicators are bounded by one, the Hessian is uniformly bounded. Thus $h\mapsto\psi_{\mu,h}(q)$ is continuously differentiable.

Fix $h\in\R^m$, and let $\eta\in\cP(\supp\mu)$ be a minimizer in~\eqref{eq:finite-duality}; such a minimizer exists because $\cP(\supp\mu)$ is compact and $g_q$ is continuous. For every $v\in\R^m$ and $\varepsilon\in\R$, we have
\begin{equation*}
 \psi_{\mu,h+\varepsilon v}(q)
 \le g_q(\eta)-(h+\varepsilon v)\cdot\eta
 =\psi_{\mu,h}(q)-\varepsilon v\cdot\eta.
\end{equation*}
Dividing by $\varepsilon$ for $\varepsilon>0$ and $\varepsilon<0$ and then sending $\varepsilon\to0$ gives
\begin{equation}\label{eq:minimizer-gradient}
 \eta=-\nabla_h\psi_{\mu,h}(q).
\end{equation}

We now fix $\nu$ in the relative interior of $\cP(\supp\mu)$ and show that there is an $h$ for which $\nu$ is the minimizer in~\eqref{eq:finite-duality}. Equation~\eqref{eq:regularity} and the identity $\psi_{\mu,h}(0)=-\log\sum_{j=1}^m\mu_j e^{h_j}$ give
\begin{equation}\label{eq:psi-h-zero-comparison}
 \bigg|\psi_{\mu,h}(q)+\log\sum_{j=1}^m\mu_j e^{h_j}\bigg|
 \le |q|_{L^1}.
\end{equation}
Define $J_\nu(h):=\psi_{\mu,h}(q)+h\cdot\nu$. Translation invariance, $J_\nu(h+c\1)=J_\nu(h)$, allows us to normalize $\max_jh_j=0$. Put $\mu_*:=\min_j\mu_j>0$ and $\nu_*:=\min_j\nu_j>0$. On this slice,~\eqref{eq:psi-h-zero-comparison} gives
\begin{equation*}
 J_\nu(h)\le-\log\mu_*+|q|_{L^1}-\nu_*|h|_\infty.
\end{equation*}
Thus $J_\nu$ attains a maximum on the slice, and translation invariance makes the maximizer $h_*$ global on $\R^m$. This also applies when $m=1$, for which the slice is the single point $0$. Differentiability at the global maximum gives
\begin{equation*}
 \nu=-\nabla_h\psi_{\mu,h_*}(q).
\end{equation*}
This is precisely~\eqref{eq:minimizer-gradient} with $\eta=\nu$. Every minimizer in~\eqref{eq:finite-duality} at $h=h_*$ must therefore equal $\nu$. Consequently,
\begin{equation}\label{eq:g-legendre-interior}
 g_q(\nu)=\psi_{\mu,h_*}(q)+h_*\cdot\nu
 =\sup_{h\in\R^m}\{\psi_{\mu,h}(q)+h\cdot\nu\}
\end{equation}
for every $\nu$ in the relative interior of $\cP(\supp\mu)$.

Equation~\eqref{eq:g-legendre-interior} shows that $g_q$ is convex on the relative interior. Since $g_q$ is continuous on $\cP(\supp\mu)$, it is convex on all of $\cP(\supp\mu)$. Extend $g_q$ to $\R^m$ by setting it equal to $+\infty$ outside $\cP(\supp\mu)$. The resulting function is proper, lower semicontinuous, and convex. Equation~\eqref{eq:finite-duality} says that its convex conjugate is $h\mapsto-\psi_{\mu,h}(q)$. The finite-dimensional Fenchel--Moreau theorem therefore gives
\begin{equation}\label{eq:g-legendre}
 g_q(\nu)=\sup_{h\in\R^m}\{\psi_{\mu,h}(q)+h\cdot\nu\},
 \qquad \nu\in\cP(\supp\mu).
\end{equation}
The right-hand side of~\eqref{eq:g-legendre} is independent of the subsequence chosen in~\eqref{eq:finite-subsequence}. Hence every subsequence has the same subsequential limit, and the whole sequence satisfies~\eqref{eq:finite-initial-convergence}. Equation~\eqref{eq:g-legendre} proves the first equality in~\eqref{eq:finite-initial-identification}.

\smallskip
\noindent\emph{Step 3: Identification with $\varphi^\nu$ and uniformity in $q$.}
If $\nu_j>0$ for every $j$, change the tilt by
\begin{equation*}
 \widehat h_j:=h_j+\log(\mu_j/\nu_j).
\end{equation*}
Then $e^h\mu=e^{\widehat h}\nu$ and $h\cdot\nu=\widehat h\cdot\nu+\Ent(\nu\mid\mu)$, so~\eqref{eq:g-legendre} and~\eqref{eq:initial-definition} give $g_q(\nu)=\Ent(\nu\mid\mu)+\varphi^\nu(q)$ directly.

For a boundary measure $\nu$, restrict first to $\supp\nu$. For any $h\in\R^m$, removing the mass outside $\supp\nu$ increases the free energy, and the same tilt change on $\supp\nu$ gives
\begin{equation*}
 \psi_{\mu,h}(q)+h\cdot\nu\le\Ent(\nu\mid\mu)+\varphi^\nu(q).
\end{equation*}
Conversely, fix a tilt $k$ on $\supp\nu$ and let $h_j=k_j+\log(\nu_j/\mu_j)$ for $a_j\in\supp\nu$, and $h_j=-M$ for $a_j\notin\supp\nu$. As $M\to\infty$, the partition functions decrease to those with reference measure $e^k\nu$. Their logarithms lie between the integrable logarithms for $M=0$ and the restricted measure, so dominated convergence gives $\psi_{\mu,h}(q)\to\psi_{\nu,k}(q)$. Taking the supremum over $k$ proves
\begin{equation}\label{e.g_q=Ent+varphi}
 g_q(\nu)=\Ent(\nu\mid\mu)+\varphi^\nu(q),
 \qquad\nu\in\cP(\supp\mu).
\end{equation}
This proves the second equality in~\eqref{eq:finite-initial-identification}. The normalization identity~\eqref{eq:normalization} and the type-probability bound~\eqref{eq:stirling} transfer the uniform convergence in~\eqref{eq:finite-initial-convergence} to the normalized free energies. Since $g_q$ and entropy are continuous,
\begin{equation}\label{eq:normalized-constrained-limit}
 \lim_{N\to\infty}\bar F_N^{\nu_N}(0,q)=\varphi^\nu(q)
 \quad\text{whenever }\nu_N\in\Delta_{m,N},\quad\nu_N\to\nu\text{ in }W_1.
\end{equation}

By Lemma~\ref{lem:initial-regularity}, $\varphi^\nu$ is $1$-Lipschitz in $L^1$. Together with Lemma~\ref{lem:regularity}, this extends~\eqref{eq:finite-initial-identification} and~\eqref{eq:finite-initial-convergence} from bounded paths to every $q\in\cQ_1$ by $L^1$ approximation.

Finally, fix $Q\in\R_+$ and define
\begin{equation*}
 \mcl Q_{\infty, \leq Q}:=\{q\in\cQ_\infty:|q|_{L^\infty}\le Q\}.
\end{equation*}
The set $\mcl Q_{\infty, \leq Q}$ is relatively compact in $L^1$. Indeed, for every partition $0=s_0<\cdots<s_k<1$, positivity of the increments gives
\begin{equation*}
 \sum_{\ell=1}^k|q(s_\ell)-q(s_{\ell-1})|
 \le\sum_{\ell=1}^k\tr\bigl(q(s_\ell)-q(s_{\ell-1})\bigr)
 \le\sqrt D\,Q.
\end{equation*}
Helly's selection argument and dominated convergence therefore give $L^1$ compactness after taking right-continuous representatives. By~\eqref{eq:normalization} and Lemma~\ref{lem:regularity}, the map $q\mapsto\bar F_{N,\mu^{\otimes N},\Sigma_N(\nu)}(0,q)$ is $1$-Lipschitz in $L^1$, uniformly in $N$ and $\nu$. By~\eqref{e.g_q=Ent+varphi} and the $L^1$-Lipschitz bound for $\varphi^\nu$, the map $q\mapsto g_q(\nu)$ has the same bound, uniformly in $\nu$.

Fix $\varepsilon>0$ and choose $q^1,\ldots,q^R\in \mcl Q_{\infty, \leq Q}$ such that every $q\in \mcl Q_{\infty, \leq Q}$ satisfies $\min_{r\in\{1,\ldots,R\}}|q-q^r|_{L^1}<\varepsilon$. By~\eqref{eq:finite-initial-convergence}, for all sufficiently large $N$ we have
\begin{equation*}
 \max_{r\in\{1,\ldots,R\}}
 \sup_{\nu\in\Delta_{m,N}}
 |\bar F_{N,\mu^{\otimes N},\Sigma_N(\nu)}(0,q^r)-g_{q^r}(\nu)|
 <\varepsilon.
\end{equation*}
For $q\in \mcl Q_{\infty, \leq Q}$, choose $r$ with $|q-q^r|_{L^1}<\varepsilon$. The two Lipschitz estimates and the preceding display give
\begin{equation*}
 \sup_{\nu\in\Delta_{m,N}}
 |\bar F_{N,\mu^{\otimes N},\Sigma_N(\nu)}(0,q)-g_q(\nu)|
 <3\varepsilon.
\end{equation*}
Since $\varepsilon>0$ is arbitrary, we obtain
\begin{equation*}
 \lim_{N\to\infty}
 \sup_{\substack{q\in\cQ_\infty\\|q|_{L^\infty}\le Q}}
 \sup_{\nu\in\Delta_{m,N}}
 |\bar F_{N,\mu^{\otimes N},\Sigma_N(\nu)}(0,q)-g_q(\nu)|=0.
\end{equation*}
The proof is complete.
\end{proof}

\begin{lemma}\label{lem:finite-initial-W1}
Suppose that $\mu$ has finite support. For every $\nu,\nu'\in\cP(\supp\mu)$, the initial data satisfy
\begin{equation}\label{eq:finite-initial-W1}
 |\varphi^\nu(q)-\varphi^{\nu'}(q)|
 \le2|q|_{L^\infty}W_1(\nu,\nu'),\qquad q\in\cQ_\infty.
\end{equation}
\end{lemma}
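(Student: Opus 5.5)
We will prove \eqref{eq:finite-initial-W1} by passing to the limit in the finite-$N$ Lipschitz estimate of Lemma~\ref{lem:type-continuity} at $t=0$. The link between the finite-$N$ constrained free energies and $\varphi^\nu$ is the convergence \eqref{eq:normalized-constrained-limit} from Proposition~\ref{prop:finite-initial}.

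\emph{Step 1 (discrete approximations).} Fix $q\in\cQ_\infty$ and $\nu,\nu'\in\cP(\supp\mu)$. We will construct $\nu_N,\nu'_N\in\Delta_{m,N}$ with $\nu_N\to\nu$ and $\nu'_N\to\nu'$ in $W_1$. For instance, round the masses to multiples of $N^{-1}$, as in the density argument in the proof of Proposition~\ref{prop:finite-initial}. Every element of $\Delta_{m,N}$ is an empirical measure $N^{-1}\sum_{i=1}^N\delta_{x_i}$ with $x_i\in\supp\mu\subset\B$, so Lemma~\ref{lem:type-continuity} applies. With $t=0$ it gives
\begin{equation*}
 |\bar F_N^{\nu_N}(0,q)-\bar F_N^{\nu'_N}(0,q)|\le2|q|_{L^\infty}W_1(\nu_N,\nu'_N).
\end{equation*}

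\emph{Step 2 (limit).} By \eqref{eq:normalized-constrained-limit}, $\bar F_N^{\nu_N}(0,q)\to\varphi^\nu(q)$ and $\bar F_N^{\nu'_N}(0,q)\to\varphi^{\nu'}(q)$. By the triangle inequality for $W_1$, $W_1(\nu_N,\nu'_N)\to W_1(\nu,\nu')$. Letting $N\to\infty$ in the display of Step 1 yields \eqref{eq:finite-initial-W1}.

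The main point needing care is that \eqref{eq:normalized-constrained-limit} holds for every $\nu$, including measures on the boundary of the simplex. That covers all $\nu,\nu'\in\cP(\supp\mu)$. It rests on three facts established in Proposition~\ref{prop:finite-initial}:
\begin{itemize}
\item the identity $g_q(\nu)=\Ent(\nu\mid\mu)+\varphi^\nu(q)$ holds on all of $\cP(\supp\mu)$;
\item the convergence \eqref{eq:finite-initial-convergence} is uniform over the grid;
\item $g_q$ and $\Ent(\cdot\mid\mu)$ are continuous.
\end{itemize}
Combining these with \eqref{eq:normalization} and \eqref{eq:stirling} gives \eqref{eq:normalized-constrained-limit} for all $\nu$, so no separate boundary argument is required.
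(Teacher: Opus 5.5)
Your proposal is correct and matches the paper's proof: choose grid approximations $\nu_N,\nu_N'\in\Delta_{m,N}$, apply Lemma~\ref{lem:type-continuity} at $t=0$, and pass to the limit using \eqref{eq:normalized-constrained-limit}. Your remark on boundary measures is also right: \eqref{eq:normalized-constrained-limit} holds at every $\nu\in\cP(\supp\mu)$, because \eqref{e.g_q=Ent+varphi} was established on the whole simplex.
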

\begin{proof}
Choose $\nu_N,\nu_N'\in\Delta_{m,N}$ converging to $\nu,\nu'$ in $W_1$. Lemma~\ref{lem:type-continuity} at $t=0$ gives
\begin{equation*}
 |\bar F_N^{\nu_N}(0,q)-\bar F_N^{\nu_N'}(0,q)|
 \le2|q|_{L^\infty}W_1(\nu_N,\nu_N').
\end{equation*}
Pass to the limit using~\eqref{eq:normalized-constrained-limit}.
\end{proof}

\section{Identification of the constrained viscosity solutions}\label{sec:constrained-HJ}

We now give a precise meaning to the Hamilton--Jacobi solution
$f^\nu$ appearing in Theorem~\ref{thm:main} and identify $f^\nu$ as
the limit of the normalized constrained free energies when $\mu$
has finite support. 

\subsection{Viscosity solutions and finite-dimensional approximations}
Let $E$ be a finite-dimensional Hilbert space, let $U\subset(0,\infty)\times E$ be open, and let $H:E\to\R$ be continuous.

\begin{definition}[Viscosity solutions]\label{def:viscosity}
A continuous function $u:U\to\R$ is a \textbf{viscosity subsolution} of $\partial_tu-H(D_xu)=0$ if, for every $\phi\in C^1(U)$ and every local maximum $(t,x)\in U$ of $u-\phi$, we have
\begin{equation*}
 \partial_t\phi(t,x)-H(D_x\phi(t,x))\le0.
\end{equation*}
A \textbf{viscosity supersolution} satisfies the reverse inequality at every local minimum of $u-\phi$. A \textbf{viscosity solution} is both a viscosity subsolution and a viscosity supersolution.
\end{definition}

We use Definition~\ref{def:viscosity} only for finite-dimensional equations; the solution on $\cQ_2$ is defined by finite-dimensional approximations below.

The spin-glass Hamilton--Jacobi equation that we wish to interpret is
\begin{equation}\label{eq:HJ}
 \partial_t f-\int_0^1\xi\bigl(\partial_qf(s)\bigr)\dd s=0
 \quad\text{on }\R_+\times\cQ_2.
\end{equation}
We define its solution through finite-dimensional approximations. For $K\in\N$, equip $E_K:=(S^D)^K$ with the inner product $\ip a b_K:=K^{-1}\sum_{k=1}^Ka_k\cdot b_k$, and write $|a|_{\ell^1_K}:=K^{-1}\sum_{k=1}^K|a_k|$ and $|a|_{\ell^\infty}:=\max_{k\in\{1,\dots,K\}}|a_k|$. Define the closed convex cone
\begin{equation*}
 \cQ^K:=\{a\in E_K:0\preceq a_1\preceq\cdots\preceq a_K\}.
\end{equation*}
As in~\eqref{eq:dual-cone}, define the dual cone $(\cQ^K)^*:=\{b\in E_K:\ip b a_K\ge0\text{ for every }a\in\cQ^K\}$. As for the notion of $\cQ_2^*$-increasing introduced after~\eqref{eq:dual-cone}, a function $u:\cQ^K\to\R$ is \textbf{$(\cQ^K)^*$-increasing} if $u(a)\ge u(b)$ whenever $a,b\in\cQ^K$ satisfy $a-b\in(\cQ^K)^*$.
For the uniform partition of $[0,1)$, define the lift $L_K:E_K\to L^2([0,1];S^D)$ and projection $P_K:L^2([0,1];S^D)\to E_K$ by
\begin{equation*}
 L_Ka:=\sum_{k=1}^Ka_k\1_{[(k-1)/K,k/K)}
 \quad\text{and}\quad
 (P_Kp)_k:=K\int_{(k-1)/K}^{k/K}p(s)\dd s.
\end{equation*}
These maps preserve the corresponding cones in the sense that $P_K(\cQ_2)=\cQ^K$ and $L_K(\cQ^K)\subseteq\cQ_2$; see~\cite[Lemmas 3.3 and 3.5]{chen2022hamilton}. Let $\nabla_K$ denote the gradient in the weighted inner product on $E_K$; thus it is $K$ times the ordinary Euclidean gradient. Define the finite-dimensional Hamiltonian $\xi_K:E_K\to\R$ by
\begin{equation}\label{eq:finite-H}
 \xi_K(p):=\frac1K\sum_{k=1}^K\xi(p_k).
\end{equation}

For every $\nu\in\cP(\supp\mu)$, Lemma~\ref{lem:initial-regularity} supplies the regularity and monotonicity of $\varphi^\nu$ required below.
The restriction of $\xi_K$ to $\cQ^K$ is locally Lipschitz and increasing along $(\cQ^K)^*$, by the covariance assumptions on $\xi$; see~\cite[proof of Theorem 4.7(1)]{chen2022hamilton}. Applying~\cite[Theorem 2.2(2)]{chen2022hamilton}, let $f^{\nu,K}:\R_+\times\cQ^K\to\R$ be the unique Lipschitz, $(\cQ^K)^*$-increasing viscosity solution, in the sense of Definition~\ref{def:viscosity} on $(0,\infty)\times\operatorname{int}\cQ^K$, of
\begin{equation}\label{eq:finite-HJ}
 \begin{cases}
 \partial_t f^{\nu,K}-\xi_K(\nabla_Kf^{\nu,K})=0 & \text{on }(0,\infty)\times\operatorname{int}\cQ^K,\\
 f^{\nu,K}(0,\cdot)=\varphi^\nu(L_K\cdot) & \text{on }\cQ^K.
 \end{cases}
\end{equation}
The monotonicity of $\xi_K$ along $(\cQ^K)^*$ makes a separate boundary condition unnecessary: existence and uniqueness follow from~\cite[Theorem 2.2]{chen2022hamilton}, and the continuous extension from the interior solves the equation on the closed cone in the sense of~\cite[Proposition 2.3]{chen2022hamilton}.

\begin{definition}[Solution of the spin-glass Hamilton--Jacobi equation]\label{def:spin-glass-solution}
For $\nu\in\cP(\supp\mu)$, define $f^\nu:\R_+\times\cQ_2\to\R$ by
\begin{equation}\label{eq:HJ-discrete-limit}
 f^\nu(t,q):=\lim_{K\to\infty}f^{\nu,K}(t,P_Kq),
 \qquad (t,q)\in\R_+\times\cQ_2.
\end{equation}
\end{definition}

The limit in~\eqref{eq:HJ-discrete-limit} exists locally uniformly. Indeed, the uniform-refinement estimate in~\cite[Proof of Proposition 3.10, (3.42)]{chen2022hamilton} allows one to compare the approximations on grids of sizes $K$ and $K'$ through their common refinement of size $\operatorname{lcm}(K,K')$. Its identification as the unique Lipschitz viscosity solution of~\eqref{eq:HJ} in the infinite-dimensional sense is proved in~\cite[Definition 4.3 and Theorem 4.7(1)]{chen2022hamilton}; see also~\cite[Remark 4.11 and Proposition 4.13]{chen2022hamilton}. We do not repeat the infinite-dimensional definition, since~\eqref{eq:finite-HJ} and~\eqref{eq:HJ-discrete-limit} suffice for our purposes.

By~\cite[Proposition 2.4 and Theorem 4.7(1)]{chen2022hamilton}, the finite-dimensional solutions preserve the $1$-Lipschitz constant in the weighted $\ell^1$ norm. Their time Lipschitz constants are at most $M_0$ (see~\eqref{e.M_i=}), because~\eqref{eq:finite-H} is bounded by $M_0$ on $\cQ^K\cap\{p:|p|_{\ell^\infty}\le1\}$. These bounds pass to~\eqref{eq:HJ-discrete-limit}; in particular, we have
\begin{equation}\label{eq:solution-regularity}
 |f^\nu(t,q)-f^\nu(t',q')|\le M_0|t-t'|+|q-q'|_{L^1},
 \qquad (t,q),(t',q')\in\R_+\times\cQ_2.
\end{equation}

\subsection{Finite-dimensional comparison}

For the rest of this section, assume that $\supp\mu=\{a_1,\ldots,a_m\}$ is finite. We retain the normalized constrained free energies $\bar F_N^\nu$ (see~\eqref{eq:normalized-constrained-notation}), with initial limits $\varphi^\nu$ from Proposition~\ref{prop:finite-initial}. We pass from the finite-size residual to viscosity comparison using the maps $L_K$ and $P_K$ defined above.

\begin{lemma}\label{lem:projection}
For every $p\in\cQ_\infty$ with $|p|_{L^\infty}\le1$, we have
\begin{equation*}
 |p-L_KP_Kp|_{L^1}\le\frac{\sqrt D}{K}
 \quad\text{and}\quad
 \left|\int_0^1\xi(p(s))\dd s-\xi_K(P_Kp)\right|\le\frac{M_1\sqrt D}{K}.
\end{equation*}
The Hamiltonian $\xi_K$ satisfies the Lipschitz bound
\begin{equation}\label{eq:finite-h-Lipschitz}
 |\xi_K(p)-\xi_K(p')|\le M_1|p-p'|_{\ell^1_K},
 \qquad p,p'\in\cQ^K,\quad|p|_{\ell^\infty}\vee|p'|_{\ell^\infty}\le1.
\end{equation}
\end{lemma}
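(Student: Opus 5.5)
The plan is to compare $p$ with its block averages on the uniform partition, block by block, using the monotonicity of $p$. Write $I_k:=[(k-1)/K,k/K)$ and $\bar p_k:=(P_Kp)_k=K\int_{I_k}p$. Since $p$ is increasing in the Loewner order, for $s\in I_k$ both $p(s)$ and $\bar p_k$ lie in the order interval $[p((k-1)/K),p(k/K^-)]$, with $p(1^-)=p(1)$ for the last block. For $0\preceq x\preceq y$, the Frobenius norm satisfies $|y-x|\le\tr(y-x)$, because the eigenvalues of $y-x$ are nonnegative and so $|y-x|$ is at most the sum of those eigenvalues. For $s\in I_k$, the matrices $p(s)-\bar p_k$ and $\bar p_k-p(s)$ are each dominated by the increment $\Delta_k:=p(k/K^-)-p((k-1)/K)\succeq 0$. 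I would use $p(s)-\bar p_k=K\int_{I_k}(p(s)-p(u))\dd u$ and bound the norm of each integrand by $\tr\Delta_k$. This gives $|p(s)-\bar p_k|\le\tr\Delta_k$.

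Integrating over $I_k$ and summing over $k$ gives
\begin{equation*}
 |p-L_KP_Kp|_{L^1}\le\frac1K\sum_{k=1}^K\tr\Delta_k\le\frac1K\tr\bigl(p(1)-p(0)\bigr)\le\frac1K\tr p(1).
\end{equation*}
The middle inequality uses telescoping together with monotonicity, since the gaps between consecutive blocks are positive semidefinite and can only enlarge the sum. Finally $\tr p(1)\le\sqrt D|p(1)|\le\sqrt D$ by Cauchy--Schwarz, because $|p(1)|\le|p|_{L^\infty}\le1$. This proves the first bound.

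For the second bound, note that $\xi_K(P_Kp)=\int_0^1\xi(L_KP_Kp(s))\dd s$ by definition of $L_K$. Both $p(s)$ and $\bar p_k$ lie in the unit ball, the latter by convexity of the norm, so the segment between them does too. The mean value theorem with $M_1$ then gives $|\xi(p(s))-\xi(\bar p_k)|\le M_1|p(s)-\bar p_k|$. Integrating and applying the first bound yields $M_1\sqrt D/K$.

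The Lipschitz bound \eqref{eq:finite-h-Lipschitz} follows the same way, coordinatewise. For $|p_k|,|p'_k|\le1$ we have $|\xi(p_k)-\xi(p'_k)|\le M_1|p_k-p'_k|$, since the unit ball is convex. Averaging over $k$ gives exactly $M_1|p-p'|_{\ell^1_K}$. The only point needing care is the first bound, namely the trace-dominates-norm argument for positive semidefinite increments and the handling of the endpoint $p(1)$ as a left limit. I do not expect a real obstacle.
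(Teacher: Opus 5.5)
Your proposal is correct and follows essentially the paper's argument. Both proofs bound $|p(s)-(P_Kp)_k|$ by averaging $|p(s)-p(r)|$ over the block, use that trace dominates norm for positive semidefinite increments, telescope the traces to a quantity at most $\sqrt D$, and then apply the mean value theorem (globally and coordinatewise). The only cosmetic difference is that you bound each block by $\tr\Delta_k$ and telescope to $\tr(p(1)-p(0))$, whereas the paper uses $\sup_{I_k}\tr p-\inf_{I_k}\tr p$ and sums to $\sup_s\tr p(s)$.
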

\begin{proof}
Fix $p\in\cQ_\infty$ and let $I_k:=[(k-1)/K,k/K)$ for $k\in\{1,\ldots,K\}$. Since $p$ is increasing in the positive-semidefinite order, we have
\begin{equation*}
 |p(s)-p(r)|\le\tr\bigl(p(s)-p(r)\bigr),
 \qquad 0\le r\le s<1.
\end{equation*}
By Jensen's inequality and the definition of $P_K$, we obtain
\begin{align*}
 \int_{I_k}|p(s)-(P_Kp)_k|\dd s
 &\le K\int_{I_k}\int_{I_k}|p(s)-p(r)|\dd r\dd s\le\frac1K\biggl(\sup_{s\in I_k}\tr p(s)-\inf_{s\in I_k}\tr p(s)\biggr).
\end{align*}
The increments in the last line sum to at most $\sup_{s\in[0,1)}\tr p(s)$, because $\tr p$ is nonnegative and increasing. Consequently, for every $p\in\cQ_\infty$, we have
\begin{equation}\label{eq:projection-L1-bound}
 |p-L_KP_Kp|_{L^1}
 \le\frac1K\sup_{s\in[0,1)}\tr p(s)
 \le\frac{\sqrt D}{K}|p|_{L^\infty}.
\end{equation}
This proves the first assertion of the lemma when $|p|_{L^\infty}\le1$.

In this case, both $p(s)$ and $(L_KP_Kp)(s)$ belong to the unit ball of $S^D$. By~\eqref{e.M_i=}, the mean-value theorem gives
\begin{align*}
 \left|\int_0^1\xi(p(s))\dd s-\xi_K(P_Kp)\right|
 &\le M_1|p-L_KP_Kp|_{L^1}
 \stackref{\le}{\eqref{eq:projection-L1-bound}}\frac{M_1\sqrt D}{K}.
\end{align*}
Finally, for $p,p'\in\cQ^K$ satisfying $|p|_{\ell^\infty}\vee|p'|_{\ell^\infty}\le1$, the line segment between $p_k$ and $p_k'$ lies in the unit ball. The definition of $M_1$ gives
\begin{align*}
 |\xi_K(p)-\xi_K(p')|
 &\le\frac1K\sum_{k=1}^K|\xi(p_k)-\xi(p_k')|\le\frac{M_1}{K}\sum_{k=1}^K|p_k-p_k'|
 =M_1|p-p'|_{\ell^1_K}.
\end{align*}
This proves~\eqref{eq:finite-h-Lipschitz}.
\end{proof}

\begin{lemma}\label{lem:step-residual}
Proposition~\ref{prop:residual} also holds at $q=L_Ka+\varepsilon\ell$, where $a\in\cQ^K$ and $\ell:[0,1]\to S^D_+$ is given by $\ell(s):=sI_D$.
\end{lemma}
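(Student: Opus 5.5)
The plan is to approximate $q=L_Ka+\varepsilon\ell$, which is a step path plus a linear ramp and hence not $C^1$, by paths $q_n\in C^1([0,1];S^D_+)$ with $\dot q_n\succeq\varepsilon I_D$. We then apply Proposition~\ref{prop:residual} at each $q_n$ and pass to the limit using the continuity of both derivatives in $q$. Since $L_Ka$ is increasing with finitely many jumps $a_k-a_{k-1}\succeq0$ at $s=(k-1)/K$, we mollify each jump. Concretely, we replace $\1_{[(k-1)/K,1)}(a_k-a_{k-1})$ by $\chi_n(s-(k-1)/K)(a_k-a_{k-1})$, where $\chi_n$ is a smooth nondecreasing function from $0$ to $1$ on a window of length $1/n$. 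Set $q_n:=\sum_k\chi_n(\cdot-(k-1)/K)(a_k-a_{k-1})+\varepsilon\ell$, with $a_0:=0$. At the first jump we may also start at $a_1$ directly, because $q(0)$ is allowed to be nonzero. Then $q_n\in C^1$, $\dot q_n\succeq\varepsilon I_D$, and $q_n$ is increasing. Moreover $|q_n-q|_{L^\infty}\le|a|_{\ell^\infty}+\varepsilon$, and $q_n\to q$ in $L^1$ and in $L^2$, since the paths are uniformly bounded.

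Next we pass to the limit in the left-hand side of \eqref{eq:residual}. For the $t$-derivative, Lemma~\ref{lem:regularity} gives $\partial_t\bar F_N^\nu(t,q_n)=\E\langle\xi(\overlap_N(\sigma,\sigma'))\rangle_{t,q_n}$. The Gibbs measure depends continuously on $q$, because the enriched Hamiltonian is a Gaussian field whose covariance depends continuously on $q$ in $L^1$ through the cascade, and $\supp\mu$ is finite. So we would argue $\partial_t\bar F_N^\nu(t,q_n)\to\partial_t\bar F_N^\nu(t,q)$.

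A cleaner route, which we would try first, uses only the Lipschitz regularity. The functions $t\mapsto\bar F_N^\nu(t,q_n)$ are $M_0$-Lipschitz and converge uniformly to $\bar F_N^\nu(t,q)$ by \eqref{eq:regularity}. If the $t$-derivatives of the approximants converge locally uniformly in $t$, the limit derivative is identified. An alternative is to integrate the residual inequality in $t$ over a small interval $[t,t+h]$ and let $h\to0$ at the end.

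For the $q$-derivative, \eqref{eq:gradient-continuity} gives
\[|\partial_q\bar F_N^\nu(t,q_n)-\partial_q\bar F_N^\nu(t,q)|_{L^1}\le16N|q_n-q|_{L^1}\to0\]
for fixed $N$. All these derivatives are bounded by $1$ in $L^\infty$ by Lemma~\ref{lem:regularity}. The mean-value bound with $M_1$ therefore gives $\int_0^1\xi(\partial_q\bar F_N^\nu(t,q_n,s))\dd s\to\int_0^1\xi(\partial_q\bar F_N^\nu(t,q,s))\dd s$.

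The bound $C(N\varepsilon)^{-1/3}$ is independent of $n$, so letting $n\to\infty$ in \eqref{eq:residual} at $q_n$ yields \eqref{eq:residual} at $q$.

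The main obstacle is the convergence of $\partial_t\bar F_N^\nu(t,q_n)$, since Lemma~\ref{lem:regularity} only gives Lipschitz continuity of $\bar F$ itself. We would handle it with the replica formula \eqref{eq:replica-derivatives}. For fixed $N$ and finite $\supp\mu$, the quantity $\E\langle\xi(\overlap_N)\rangle$ is a Gaussian–cascade average of a bounded function, and it is continuous in $q$ under $L^1$ convergence with uniformly bounded $L^\infty$ norm. This follows by the same Gaussian interpolation as in Lemma~\ref{lem:coupling}: interpolate between the $q_n$ and $q$ enrichments and bound the derivative of the Gibbs average of a bounded observable. Along the interpolation, that derivative is controlled by $N|q_n-q|_{L^1}$ times a constant depending on $N$, via the uniform overlap law of $\alpha\wedge\alpha'$ under $\E\cR^{\otimes2}$.
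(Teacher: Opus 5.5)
Your proposal is correct and is essentially the paper's argument. You build smooth increasing approximants with $\dot q\succeq\varepsilon I_D$; the paper instead convolves the extended step path with a mollifier and adds $\varepsilon\ell$, which is immaterial. You then apply Proposition~\ref{prop:residual} at each approximant and pass to the limit. For the path term you use \eqref{eq:gradient-continuity} with the $M_1$-Lipschitz bound on the unit ball. For the time term you use a Gaussian interpolation of the replica formula \eqref{eq:replica-derivatives}, which gives $|\partial_t\bar F_N^\nu(t,q)-\partial_t\bar F_N^\nu(t,q')|\le CNM_0|q-q'|_{L^1}$.

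The Lipschitz-only and time-integration detours are unnecessary; the first presupposes the very convergence it is meant to establish. In the interpolation step, the self-overlap terms cancel against the compensation. What is then really used is that cascade overlaps between distinct replicas stay uniform under the Gibbs measure $\E\langle\cdot\rangle$, by invariance of Ruelle cascades, and not merely under $\E\cR^{\otimes2}$.
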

\begin{proof}
Fix $a\in\cQ^K$ and let $\widehat q_a:\R\to S^D_+$ be the extension of $L_Ka$ that is constant on $(-\infty,0)$ and $[1,\infty)$. Choose a nonnegative $\kappa\in C_c^\infty(\R)$ with $\int_\R\kappa=1$. For $\eta>0$, define $\kappa_\eta(r):=\eta^{-1}\kappa(r/\eta)$ and $q_\eta^0:=\kappa_\eta*\widehat q_a$. Since $\widehat q_a$ is positive semidefinite and increasing, the same holds for $q_\eta^0$. In addition, $q_\eta^0$ is smooth, and the 
    standard mollifying argument
    gives $\lim_{\eta\downarrow0}|q_\eta^0-L_Ka|_{L^1}=0$.
Consequently, $q_\eta:=q_\eta^0+\varepsilon\ell$ satisfies $\dot q_\eta(s)\succeq\varepsilon I_D$ for every $s\in[0,1]$ and $\lim_{\eta\downarrow0}|q_\eta-(L_Ka+\varepsilon\ell)|_{L^1}=0$.

We apply Proposition~\ref{prop:residual} to $q_\eta$ and let $\eta\downarrow0$, with $N,t,K,a,\varepsilon$ fixed. Equation~\eqref{eq:gradient-continuity} gives convergence of the path derivatives in $L^1$. The time derivative is continuous as well: the bounded replica representation in~\eqref{eq:replica-derivatives}, differentiated along a Gaussian interpolation, gives
\begin{equation*}
 |\partial_t\bar F_N^\nu(t,q)-\partial_t\bar F_N^\nu(t,q')|
 \le C NM_0|q-q'|_{L^1}.
\end{equation*}
Let $q_*:=L_Ka+\varepsilon\ell$. For fixed $N$, both derivatives therefore converge as $\eta\downarrow0$. Their path components have $L^\infty$ norm at most one by Lemma~\ref{lem:regularity}, so
\begin{equation*}
 \left|\int_0^1\xi(\partial_q\bar F_N^\nu(t,q_\eta,s))\dd s
 -\int_0^1\xi(\partial_q\bar F_N^\nu(t,q_*,s))\dd s\right|
 \le M_1|\partial_q\bar F_N^\nu(t,q_\eta)-\partial_q\bar F_N^\nu(t,q_*)|_{L^1}\longrightarrow0.
\end{equation*}
The residual bound at $q_\eta$ is independent of $\eta$ and passes to $q_*$.
\end{proof}

We now prove a \emph{quantitative} comparison principle, in the style of \cite[Proposition~2.5]{chen2022hamilton}. What is important here is that in the constraint on the $\ell^\infty$-norm of $b$ in \eqref{eq:finite-local-comparison} below the growth in time (given by $M_1$) is uniform in $K$.

\begin{lemma}[Comparison principle for viscosity solutions]\label{lem:finite-local-comparison}
Fix $K\in\N$ and $T,Q\in\R_+$. Let $u,v:\R_+\times\cQ^K\to\R$ be respectively a viscosity subsolution and a viscosity supersolution of $\partial_t w-\xi_K(\nabla_Kw)=0$ on $(0,\infty)\times\operatorname{int}\cQ^K$. Suppose that $u(t,\cdot)$ and $v(t,\cdot)$ are $1$-Lipschitz in $|\cdot|_{\ell^1_K}$ and $(\cQ^K)^*$-increasing for every $t\in\R_+$. Assume also that there exists $C\in\R_+$ such that
\begin{equation*}
 |u(t,a)-u(0,a)|\vee|v(t,a)-v(0,a)|\le Ct,
 \qquad(t,a)\in\R_+\times\cQ^K.
\end{equation*}
Then
\begin{equation}\label{eq:finite-local-comparison}
 \sup_{\substack{0\le t\le T,\ a\in\cQ^K\\|a|_{\ell^\infty}\le Q}}
 (u(t,a)-v(t,a))
 \le
 \sup_{\substack{b\in\cQ^K\\|b|_{\ell^\infty}\le Q+M_1T+1}}
 (u(0,b)-v(0,b)).
\end{equation}
\end{lemma}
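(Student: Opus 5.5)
We prove \eqref{eq:finite-local-comparison} by a doubling-of-variables argument with a localizing penalty that propagates at finite speed, in the style of \cite[Proposition~2.5]{chen2022hamilton}. The key point is to measure distances with the weighted $\ell^1_K$ and $\ell^\infty$ norms, so that all constants are independent of $K$.

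First we fix the relevant gradient bounds. Since $u(t,\cdot)$ and $v(t,\cdot)$ are $1$-Lipschitz in $|\cdot|_{\ell^1_K}$, any test function touching them at an interior point has weighted gradient $p$ with $|p_k|\le1$ for every $k$. Indeed, the dual norm of $\ell^1_K$ under $\ip\cdot\cdot_K$ is the $\ell^\infty$ norm. Since $u,v$ are $(\cQ^K)^*$-increasing, the relevant gradients also lie in the dual cone of $(\cQ^K)^*$, which is $\cQ^K$. By \eqref{eq:finite-h-Lipschitz}, on this set $\xi_K$ is $M_1$-Lipschitz in $|\cdot|_{\ell^1_K}$, uniformly in $K$. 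This is the only place where the Hamiltonian enters, and it fixes the propagation speed at $M_1$.

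Set $R:=Q+M_1T+1$ and $\delta:=\sup_{|b|_{\ell^\infty}\le R}(u(0,b)-v(0,b))$. Argue by contradiction: suppose that for some $\theta>0$ there is a point $(t_0,a_0)$ with $t_0\le T$ and $|a_0|_{\ell^\infty}\le Q$ such that $u(t_0,a_0)-v(t_0,a_0)>\delta+\theta$. We then consider
\begin{equation*}
 \Psi(t,a,s,b):=u(t,a)-v(s,b)-\frac{|a-b|^2_{K}}{2\alpha}-\frac{|t-s|^2}{2\alpha}-\chi\bigl(\Theta(a)+M_1' t\bigr)-\frac{\eta}{T'-t},
\end{equation*}
with $T'>T$ slightly larger, $M_1'>M_1$, and $\chi$ a smooth nondecreasing cutoff that vanishes below $Q$ and is $+\infty$ (or very large) above $Q+M_1'T+1$. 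Here $\Theta$ is a smooth convex approximation of $\max_k|a_k|$ whose $\ell^1_K$-dual gradient has $|\nabla_K\Theta|_{\ell^1_K}\le1$. This is the natural weighted analogue of a smoothed sup-norm, since the weighted gradient of $\max_k$ is $K$ times a coordinate indicator and therefore has $\ell^1_K$ norm $1$. The cutoff confines the maximizers to $|a|_{\ell^\infty}\le R$ at time $0$, and the variables $a$ and $b$ stay close because of the Lipschitz bounds.

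At an interior maximum with $t,s>0$, the sub- and supersolution inequalities give
\begin{equation*}
 \frac{\eta}{(T'-t)^2}+M_1'\chi'\le \xi_K(p+\chi'\nabla_K\Theta)-\xi_K(p)\le M_1\chi'.
\end{equation*}
The right-hand bound is where the $\ell^1_K$-Lipschitz estimate and the bound $|\nabla_K\Theta|_{\ell^1_K}\le1$ are used. Since $M_1'>M_1$, this is a contradiction. If instead $t=0$ or $s=0$, the assumed bound $|u(t,a)-u(0,a)|\vee|v(t,a)-v(0,a)|\le Ct$ and the $1$-Lipschitz property reduce $\Psi$ to $u(0,b)-v(0,b)+o(1)$ with $|b|_{\ell^\infty}\le R$, which contradicts the choice of $(t_0,a_0)$ as $\alpha,\eta\to0$ and $M_1'\downarrow M_1$.

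The main obstacle is the boundary of $\cQ^K$. Maximizers may land on $\partial\cQ^K$, where the viscosity inequalities are not assumed. Following \cite[Proposition~2.3]{chen2022hamilton}, we handle this by the $(\cQ^K)^*$-monotonicity. We shift the doubled variable inward, replacing $b$ by $b+\iota\,\mathrm{e}$ for a fixed interior direction $\mathrm{e}$ such as $\mathrm{e}_k=kI_D/K$, and use monotonicity to show that the maximum can be taken at interior points at a cost $O(\iota)$ in the $\ell^1_K$ norm. Checking that this shift does not spoil the $\ell^\infty$ localization (it changes $|b|_{\ell^\infty}$ by only $O(\iota)$, which is absorbed into the $+1$ margin in $R$) is the delicate bookkeeping step.
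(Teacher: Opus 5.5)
\textbf{The interior argument is fine; the boundary step has a genuine gap.} Your interior argument matches the paper's: doubling of variables, a cutoff in $\Theta(a)+M_1't$ with $|\nabla_K\Theta|_{\ell^1_K}\le1$, and the $M_1$-Lipschitz bound of $\xi_K$ in $|\cdot|_{\ell^1_K}$ on $\{p\in\cQ^K:|p|_{\ell^\infty}\le1\}$. The boundary step, which you rightly flag as the main obstacle, does not work as proposed.

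Shifting $b$ by $\iota\mathrm e$ and invoking monotonicity only shows that $\Psi$ comes within $O(\iota)$ of its supremum at interior points. That already follows from continuity and density of $\operatorname{int}\cQ^K$, and it is not what is needed. The sub- and supersolution inequalities can only be invoked where a test function touches in a full neighborhood. That means at an actual maximizer of $\Psi$ whose two spatial components both lie in $\operatorname{int}\cQ^K$.

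Your shift does not produce such a point, for three reasons. First, the shifted problem can again be maximized on the boundary of its own constraint set, and shifting $b$ does nothing for the $u$-variable $a$. Second, upgrading a near-maximizer to an exact interior maximizer by a perturbation (quadratic or Ekeland-type) fails quantitatively. To keep the new maximizer away from $\partial\cQ^K$, the perturbation's gradient must be at least the ratio of the $O(\iota)$ loss to the $O(\iota)$ distance gained. That ratio does not vanish as $\iota\to0$, and it then enters $\xi_K(p^u)-\xi_K(p^v)$ without being small. Third, at a genuine boundary contact point you lose the information $p\in\cQ^K$, $|p|_{\ell^\infty}\le1$ on which your Hamiltonian estimate relies.

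\textbf{The missing idea is a barrier whose gradient is large but has a favorable sign.} This is what the paper does. It adds $\sigma b(a)$, with $b(a)=\sum_{k=1}^K\tr[(a_k-a_{k-1})^{-1}]$ and $a_0=0$, on the $u$-side of the doubled function. Because $b$ blows up on $\partial\cQ^K$, the $u$-contact point $a_\alpha$ is interior, with $b(a_\alpha)\le C_*$ uniformly in $\alpha$. Hence its increments have eigenvalues bounded below. Since $|a_\alpha-a'_\alpha|_K\to0$, the $v$-contact point $a'_\alpha$ is also interior for large $\alpha$. Both test gradients then lie in $\cQ^K$ with $\ell^\infty$ norm at most one.

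The barrier enters as $p^u-p^v=\nabla_K\chi-r$, with $r=-\sigma\nabla_Kb(a_\alpha)\in(\cQ^K)^*$. The $(\cQ^K)^*$-monotonicity of $\xi_K$ gives $\nabla_K\xi_K(p)\in\cQ^K$ on this set, so the pairing with $-r$ is nonpositive and only $M_1|\nabla_K\chi|_{\ell^1_K}$ survives. Your proposal never uses the monotonicity of $\xi_K$, yet it is precisely the ingredient that substitutes for a boundary condition.

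A smaller slip: your cutoff must vanish on $\{\Theta(a)+M_1't\le Q+M_1'T\}$, not only below $Q$. Otherwise it already penalizes your starting point $(t_0,a_0)$. With that fix you may take $M_1'=M_1$, since the term in $\eta$ already gives strictness.
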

The proof of Lemma~\ref{lem:finite-local-comparison} is a technical adaptation of the standard doubling-of-variables argument. The main difficulty is to establish comparison without imposing a boundary condition, which requires an auxiliary penalization to ensure that the contact points lie in the interior of the cone. We defer the details to Appendix~\ref{app:finite-local-comparison}.

Assume that $\mu$ has finite support, fix $T,Q\in\R_+$, and put $Q':=Q+M_1T+1$.

\begin{proposition}\label{prop:finite-comparison} 
We have
\begin{equation}\label{eq:initial-discrepancy}
 \lim_{N\to\infty}\sup_{\substack{\nu'\in\Delta_{m,N},\ p\in\cQ_\infty\\|p|_{L^\infty}\le Q'}}|\bar F_N^{\nu'}(0,p)-\varphi^{\nu'}(p)|=0.
\end{equation}
For every $N\in\N$, $\nu\in\Delta_{m,N}$, $t\in[0,T]$, $a\in\cQ^K$ with $|a|_{\ell^\infty}\le Q$, and $\varepsilon>0$, we have
\begin{equation}\label{eq:finite-comparison}
 \begin{aligned}
 |\bar F_N^\nu(t,L_Ka)-f^{\nu,K}(t,a)|
 &\le \sup_{\substack{\nu'\in\Delta_{m,N},\ p\in\cQ_\infty\\|p|_{L^\infty}\le Q'}}|\bar F_N^{\nu'}(0,p)-\varphi^{\nu'}(p)|\\
 &\quad+\varepsilon\sqrt D
 +t\left(C(N\varepsilon)^{-1/3}+\frac{M_1\sqrt D}{K}\right).
 \end{aligned}
\end{equation}
\end{proposition}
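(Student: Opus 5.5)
The proof has two parts: the initial estimate \eqref{eq:initial-discrepancy}, and the finite-$K$ comparison \eqref{eq:finite-comparison}. The second follows by applying Lemma~\ref{lem:finite-local-comparison} to suitably shifted discretizations of $\bar F_N^\nu$.

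\emph{Initial discrepancy.} Proposition~\ref{prop:finite-initial} gives
\[
\sup_{\nu'\in\Delta_{m,N}}\bigl|\bar F_{N,\mu^{\otimes N},\Sigma_N(\nu')}(0,p)-g_p(\nu')\bigr|\to0
\]
uniformly over $|p|_{L^\infty}\le Q'$, with $g_p(\nu')=\Ent(\nu'\mid\mu)+\varphi^{\nu'}(p)$. Subtracting the normalization \eqref{eq:normalization} and using \eqref{eq:stirling}, which bounds the entropy error by $m\log(N+1)/N$ uniformly in $\nu'$, yields \eqref{eq:initial-discrepancy}.

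\emph{Comparison.} Fix $N$, $\nu$ and $\varepsilon$, and define on $\R_+\times\cQ^K$
\[
u(t,a):=\bar F_N^\nu(t,L_Ka+\varepsilon\ell),\qquad \ell(s)=sI_D .
\]
By Lemma~\ref{lem:regularity}, $u$ is $1$-Lipschitz in $|\cdot|_{\ell^1_K}$, since $|L_K b|_{L^1}=|b|_{\ell^1_K}$. It is also $M_0$-Lipschitz in time and $(\cQ^K)^*$-increasing, because $L_K$ maps $(\cQ^K)^*$-differences into $\cQ_2^*$-differences (the adjoint of $L_K$ is $P_K$, and $P_K(\cQ_2)=\cQ^K$). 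Moreover $u$ is differentiable, with
\[
\partial_tu=\partial_t\bar F_N^\nu \quad\text{and}\quad \nabla_Ku=P_K\partial_q\bar F_N^\nu .
\]
Lemma~\ref{lem:step-residual} gives $|\partial_t\bar F_N^\nu-\int_0^1\xi(\partial_q\bar F_N^\nu)|\le C(N\varepsilon)^{-1/3}$ at $q=L_Ka+\varepsilon\ell$. Since $\partial_q\bar F_N^\nu$ lies in $\cQ_\infty$ with $L^\infty$ norm at most $1$, Lemma~\ref{lem:projection} converts $\int_0^1\xi(\partial_q\bar F_N^\nu)$ into $\xi_K(\nabla_Ku)$ at cost $M_1\sqrt D/K$. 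Set $\rho:=C(N\varepsilon)^{-1/3}+M_1\sqrt D/K$. Then
\[
u_\pm(t,a):=u(t,a)\mp t\rho
\]
are classical, hence viscosity, sub- and supersolutions of $\partial_tw-\xi_K(\nabla_Kw)=0$ on $(0,\infty)\times\operatorname{int}\cQ^K$, respectively. They keep the spatial Lipschitz and monotonicity properties, and they satisfy the linear-in-$t$ condition with $C=M_0+\rho$.

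We apply Lemma~\ref{lem:finite-local-comparison} twice: once to the pair $(u_-,f^{\nu,K})$ and once to $(f^{\nu,K},u_+)$. The function $f^{\nu,K}$ is Lipschitz, $1$-Lipschitz in $\ell^1_K$, and $(\cQ^K)^*$-increasing, so it qualifies on both sides. This gives
\[
|u(t,a)-f^{\nu,K}(t,a)|\le t\rho+\sup_{|b|_{\ell^\infty}\le Q'}\bigl|\bar F_N^\nu(0,L_Kb+\varepsilon\ell)-\varphi^\nu(L_Kb)\bigr| .
\]
Two $L^1$-Lipschitz errors of size $|\varepsilon\ell|_{L^1}\le\varepsilon\sqrt D/2$ remain. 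Adding $\varepsilon\ell$ to the initial path costs at most $\varepsilon\sqrt D/2$ by Lemma~\ref{lem:regularity}, after which the supremum is bounded by the $\nu'=\nu$ term of \eqref{eq:initial-discrepancy}'s quantity, since $|L_Kb|_{L^\infty}\le Q'$. Removing the shift from $u(t,a)$ to recover $\bar F_N^\nu(t,L_Ka)$ costs another $\varepsilon\sqrt D/2$. Together these give \eqref{eq:finite-comparison}.

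The main obstacle is the radius bookkeeping in the comparison lemma. The shifted initial point $L_Kb+\varepsilon\ell$ has $L^\infty$ norm up to $Q'+\varepsilon\sqrt D$, which is why the discrepancy is measured at the unshifted path $L_Kb$ and the shift is paid for separately via $L^1$-Lipschitz continuity. The other delicate point is checking that the classical derivative of $u$ is exactly $P_K\partial_q\bar F_N^\nu$, i.e.\ that relative Fréchet differentiability on $\cQ_2$ transfers to the interior of $\cQ^K$. This holds because $L_K$ maps $\operatorname{int}\cQ^K$ into $\cQ_2$ and $L_K^*=P_K$ in the weighted inner product.
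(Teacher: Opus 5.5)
Your proposal is correct and follows the paper's proof essentially step for step. You use the same shifted function $\bar F_N^\nu(t,L_Ka+\varepsilon\ell)$, the residual from Lemma~\ref{lem:step-residual} combined with Lemma~\ref{lem:projection}, the two applications of Lemma~\ref{lem:finite-local-comparison}, and the same two $\tfrac12\varepsilon\sqrt D$ shift costs. Your justification of the $\ell^1_K$-Lipschitz and $(\cQ^K)^*$-monotonicity properties, via $|L_Kb|_{L^1}=|b|_{\ell^1_K}$ and $L_K^*=P_K$, is a clean alternative to the paper's reading them off $\nabla_K u=P_K\partial_q\bar F_N^\nu\in\cQ^K$.

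There is one labeling slip. With $u_\pm:=u\mp t\rho$, it is $u_+$ that is the subsolution, so the comparison lemma should be applied to $(u_+,f^{\nu,K})$ and $(f^{\nu,K},u_-)$, not to the pairs you wrote.
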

\begin{proof}
By~\eqref{eq:normalization},~\eqref{eq:finite-initial-identification}, and~\eqref{eq:stirling}, for every $\nu'\in\Delta_{m,N}$ and $p\in\cQ_\infty$ with $|p|_{L^\infty}\le Q'$, we have
\begin{equation*}
 |\bar F_N^{\nu'}(0,p)-\varphi^{\nu'}(p)|
 \le|\bar F_{N,\mu^{\otimes N},\Sigma_N(\nu')}(0,p)-g_p(\nu')|
 +\frac{m\log(N+1)}N.
\end{equation*}
Taking suprema and using the uniform initial convergence in Proposition~\ref{prop:finite-initial} prove~\eqref{eq:initial-discrepancy}.

\smallskip
\noindent\emph{Step 1: Estimate for $\partial_tV_N-\xi_K(\nabla_KV_N)$.}
Fix $\nu\in\Delta_{m,N}$ and define $V_N:\R_+\times\cQ^K\to\R$ by
\begin{equation*}
 V_N(t,a):=\bar F_N^\nu(t,L_Ka+\varepsilon\ell).
\end{equation*}
Write $q_a:=L_Ka+\varepsilon\ell$ and $p_N:=\partial_q\bar F_N^\nu(t,q_a)$. Lemma~\ref{lem:regularity} gives $p_N\in\cQ_\infty$ and $|p_N|_{L^\infty}\le1$. For a direction $b\in E_K$, the chain rule and the definition of $P_K$ give
\begin{equation*}
 \ip{\nabla_KV_N(t,a)}{b}_K
 =\ip{p_N}{L_Kb}_{L^2}
 =\ip{P_Kp_N}{b}_K.
\end{equation*}
Thus $\nabla_KV_N(t,a)=P_Kp_N\in\cQ^K$, and $|\nabla_KV_N(t,a)|_{\ell^\infty}\le1$. In particular, $V_N(t,\cdot)$ is $1$-Lipschitz in $|\cdot|_{\ell^1_K}$ and $(\cQ^K)^*$-increasing.

For $t>0$ and $a\in\operatorname{int}\cQ^K$, Lemma~\ref{lem:step-residual} permits the use of~\eqref{eq:residual} at $q=q_a$. Since $\xi_K(P_Kp_N)=\int_0^1\xi((L_KP_Kp_N)(s))\dd s$ by~\eqref{eq:finite-H}, we have
\begin{align}\label{eq:finite-residual-derivation}
 |\partial_tV_N-\xi_K(\nabla_KV_N)|
 &\le\left|\partial_t\bar F_N^\nu(t,q_a)
 -\int_0^1\xi(p_N(s))\dd s\right|
 +\left|\int_0^1\xi(p_N(s))\dd s-\xi_K(P_Kp_N)\right|\nonumber\\
 &\stackref{\le}{\mathrm{P.\,\ref{prop:residual}},\,\mathrm{L.\,\ref{lem:projection}}}
 C(N\varepsilon)^{-1/3}+\frac{M_1\sqrt D}{K}.
\end{align}

\smallskip
\noindent\emph{Step 2: Comparison with $f^{\nu,K}$.}
Let $\gamma_{N,K,\varepsilon}:=C(N\varepsilon)^{-1/3}+M_1\sqrt D/K$. By~\eqref{eq:finite-residual-derivation}, the functions
\begin{equation*}
 V_N^-(t,a):=V_N(t,a)-\gamma_{N,K,\varepsilon}t
 \quad\text{and}\quad
 V_N^+(t,a):=V_N(t,a)+\gamma_{N,K,\varepsilon}t
\end{equation*}
are respectively a viscosity subsolution and a viscosity supersolution of~\eqref{eq:finite-HJ} on the interior of $\cQ^K$. For each fixed $t\in\R_+$, the functions $V_N^-(t,\cdot)$ and $V_N^+(t,\cdot)$ remain $1$-Lipschitz in $|\cdot|_{\ell^1_K}$ and $(\cQ^K)^*$-increasing. By~\eqref{eq:regularity}, their time increments are bounded by $(M_0+\gamma_{N,K,\varepsilon})|t-t'|$, uniformly in $a$, as required in Lemma~\ref{lem:finite-local-comparison}. The comparison lemma requires these PDE inequalities only on the interior.

For every $b\in\cQ^K$ with $|b|_{\ell^\infty}\le Q'$, the initial condition for $f^{\nu,K}$, Lemma~\ref{lem:regularity}, and $|\ell|_{L^1}=\frac12\sqrt D$ yield
\begin{equation}\label{eq:finite-initial-error}
 |V_N(0,b)-f^{\nu,K}(0,b)|
 \le\sup_{\substack{\nu'\in\Delta_{m,N},\ p\in\cQ_\infty\\|p|_{L^\infty}\le Q'}}|\bar F_N^{\nu'}(0,p)-\varphi^{\nu'}(p)|+\frac12\varepsilon\sqrt D.
\end{equation}
Apply Lemma~\ref{lem:finite-local-comparison} to $(u,v)=(V_N^-,f^{\nu,K})$ and then to $(u,v)=(f^{\nu,K},V_N^+)$. Since $Q'=Q+M_1T+1$, equation~\eqref{eq:finite-initial-error} gives, for $0\le t\le T$ and $|a|_{\ell^\infty}\le Q$,
\begin{equation}\label{eq:comparison-with-VN}
 \begin{aligned}
 |V_N(t,a)-f^{\nu,K}(t,a)|
 &\le \sup_{\substack{\nu'\in\Delta_{m,N},\ p\in\cQ_\infty\\|p|_{L^\infty}\le Q'}}|\bar F_N^{\nu'}(0,p)-\varphi^{\nu'}(p)|\\
 &\quad+\frac12\varepsilon\sqrt D
 +\gamma_{N,K,\varepsilon}t.
 \end{aligned}
\end{equation}
Finally, the $L^1$-Lipschitz estimate~\eqref{eq:regularity} gives
\begin{equation*}
 |\bar F_N^\nu(t,L_Ka)-V_N(t,a)|
 \le\varepsilon|\ell|_{L^1}
 =\frac12\varepsilon\sqrt D.
\end{equation*}
Combining this with~\eqref{eq:comparison-with-VN} proves~\eqref{eq:finite-comparison}.
\end{proof}

We are now ready to rigorously verify that $\bar F_N^\nu \simeq f^\nu$ when $N$ is large. Recall that at the formal level this follows from the fact that $\bar F_N^\nu$ approximately satisfies the same Hamilton--Jacobi equation as $f^\nu$.

\begin{theorem}\label{thm:finite-constrained}
Suppose that $\mu$ has finite support. For every $T,Q\in\R_+$, we have
\begin{equation}\label{eq:finite-constrained}
 \lim_{N\to\infty}
 \sup_{\nu\in\Delta_{m,N}}
 \sup_{\substack{0\le t\le T,\ q\in\cQ_\infty\\|q|_{L^\infty}\le Q}}
 |\bar F_N^\nu(t,q)-f^\nu(t,q)|=0.
\end{equation}
For every $\nu\in\cP(\supp\mu)$ and every sequence $\nu_N\in\Delta_{m,N}$ satisfying $\lim_{N\to\infty}W_1(\nu_N,\nu)=0$, we also have
\begin{equation}\label{eq:finite-constrained-sequential}
 \lim_{N\to\infty}\bar F_N^{\nu_N}(t,q)=f^\nu(t,q),
 \qquad \text{at every}\quad(t,q)\in\R_+\times\cQ_2.
\end{equation}
\end{theorem}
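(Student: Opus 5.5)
The plan is to pass from the step-path estimate of Proposition~\ref{prop:finite-comparison} to arbitrary bounded paths by using the $L^1$-Lipschitz bounds on both sides, then let $K\to\infty$ and $\varepsilon\to0$ in the right order.

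\emph{Reduction to lifted step paths.} Fix $T,Q$, and take $q\in\cQ_\infty$ with $|q|_{L^\infty}\le Q$. Set $a:=P_Kq\in\cQ^K$. Averaging preserves the bound, so $|a|_{\ell^\infty}\le Q$. By~\eqref{eq:projection-L1-bound}, $|q-L_KP_Kq|_{L^1}\le\sqrt D\,Q/K$. Then \eqref{eq:regularity} gives
\[|\bar F_N^\nu(t,q)-\bar F_N^\nu(t,L_Ka)|\le\sqrt DQ/K,\]
and \eqref{eq:solution-regularity} gives
\[|f^\nu(t,q)-f^\nu(t,L_Ka)|\le\sqrt DQ/K.\]

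\emph{Comparing $f^\nu(t,L_Ka)$ with $f^{\nu,K}(t,a)$.} Definition~\ref{def:spin-glass-solution} gives $f^\nu(t,L_Ka)=\lim_{K'}f^{\nu,K'}(t,P_{K'}L_Ka)$. I need a bound on $|f^{\nu,K}(t,a)-f^\nu(t,L_Ka)|$ that tends to $0$ as $K\to\infty$, uniformly in $\nu\in\cP(\supp\mu)$, $t\le T$ and $|a|_{\ell^\infty}\le Q$. Restricting to $K'$ a multiple of $K$, we have $P_{K'}L_Ka=$ the refined copy of $a$, and the refinement estimate \cite[(3.42)]{chen2022hamilton} should bound the difference between the $K$ and $K'$ solutions by a quantity of order $t\,M_1\sqrt D(Q+M_1T)/K$. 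That bound depends only on the Lipschitz constants, not on $\nu$, since every $\varphi^\nu$ is $1$-Lipschitz by Lemma~\ref{lem:initial-regularity}. This uniformity in $\nu$ is the step I expect to be the main obstacle. If the cited estimate is not uniform as stated, I would redo it with Lemma~\ref{lem:finite-local-comparison}. The map $b\mapsto f^{\nu,K'}(t,\text{refinement of }b)$ is a viscosity solution of the $K$-problem up to an error from $\xi_{K'}$ versus $\xi_K$ (Lemma~\ref{lem:projection}), with initial data that agree exactly. Comparison then gives an error of order $t/K$, uniformly in $\nu$.

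\emph{Choosing $K$, $\varepsilon$, $N$.} Combining the two previous steps with \eqref{eq:finite-comparison}, for all $\nu\in\Delta_{m,N}$ we get
\[|\bar F_N^\nu(t,q)-f^\nu(t,q)|\le 2\sqrt DQ/K+\omega(K)+\theta_N+\varepsilon\sqrt D+T\bigl(C(N\varepsilon)^{-1/3}+M_1\sqrt D/K\bigr).\]
Here $\theta_N$ is the initial-data supremum, which tends to $0$ by \eqref{eq:initial-discrepancy}, and $\omega(K)$ is the refinement error from the previous step. Take $\limsup_N$ first, then $\varepsilon\to0$, then $K\to\infty$; this gives \eqref{eq:finite-constrained}.

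\emph{The sequential statement.} For \eqref{eq:finite-constrained-sequential}, first take $q\in\cQ_\infty$. By \eqref{eq:finite-constrained}, it suffices to show $f^{\nu_N}(t,q)\to f^\nu(t,q)$. Lemma~\ref{lem:finite-initial-W1} gives $|\varphi^{\nu_N}-\varphi^\nu|\le 2Q'W_1(\nu_N,\nu)$ on paths with $|p|_{L^\infty}\le Q'$. Lemma~\ref{lem:finite-local-comparison}, applied to $f^{\nu_N,K}$ and $f^{\nu,K}$, which both satisfy its hypotheses, transfers this bound to times $t\le T$ uniformly in $K$. Passing to the limit $K\to\infty$ gives $|f^{\nu_N}(t,q)-f^\nu(t,q)|\le2Q'W_1(\nu_N,\nu)\to0$. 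Finally, for $q\in\cQ_2$, approximate in $L^1$ by truncations $q\wedge Q$, i.e.\ bounded paths. Both $\bar F_N^{\nu_N}$ and $f^\nu$ are $1$-Lipschitz in $L^1$, uniformly in $N$, by \eqref{eq:regularity} and \eqref{eq:solution-regularity}, so a $3\epsilon$ argument concludes.
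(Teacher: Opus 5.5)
Your architecture matches the paper's: project onto $\cQ^K$, apply Proposition~\ref{prop:finite-comparison}, and use \eqref{eq:regularity} and \eqref{eq:solution-regularity}. Your treatment of \eqref{eq:finite-constrained-sequential}, pushing Lemma~\ref{lem:finite-initial-W1} through Lemma~\ref{lem:finite-local-comparison} uniformly in $K$, is essentially the paper's argument. However, the step you flagged is a real gap as written.

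Because you take $\limsup_N$ before $K\to\infty$, and the supremum runs over the moving sets $\Delta_{m,N}$, you need $\sup_\nu|f^{\nu,K}(t,P_Kq)-f^\nu(t,L_KP_Kq)|\to0$ at a rate uniform in $\nu$. Neither justification you offer supplies this:
\begin{enumerate}
\item The form and $\nu$-uniformity you ascribe to \cite[(3.42)]{chen2022hamilton} are a guess, not something established in the paper.
\item The fallback fails as stated. For $K'$ a proper multiple of $K$, the refined copy $P_{K'}L_K\cQ^K$ lies entirely in $\partial\cQ^{K'}$, since consecutive blocks coincide and some increments vanish. A test function on $E_K$ touching $b\mapsto f^{\nu,K'}(t,P_{K'}L_Kb)$ does not extend to one touching $f^{\nu,K'}$. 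Even if you penalize the transverse directions, the contact point sits on $\partial\cQ^{K'}$. There, the argument placing the test gradient $p$ in $\cQ^{K'}$ is unavailable; Appendix~\ref{app:finite-local-comparison} uses it only at interior points. Without monotonicity of $p$, you lose the $O(1/K)$ control from Lemma~\ref{lem:projection} on the difference between $\xi_{K'}(p)$ and $\xi_K$ evaluated at the block average of $p$. For oscillating $p$ this difference is of order one.
\end{enumerate}

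The obstacle disappears if you reverse the order of limits, which is what the paper does. The bound \eqref{eq:finite-comparison} holds for every $K$, its only $K$-dependence is the explicit term $tM_1\sqrt D/K$, and your initial-data supremum $\theta_N$ does not involve $K$. So fix $N$, $\varepsilon$, $\nu\in\Delta_{m,N}$, $t\le T$ and $q$, and let $K\to\infty$ first. By Definition~\ref{def:spin-glass-solution}, $f^{\nu,K}(t,P_Kq)\to f^\nu(t,q)$ pointwise, so your $\omega$-term vanishes along with the other $O(1/K)$ terms. This leaves $|\bar F_N^\nu(t,q)-f^\nu(t,q)|\le\theta_N+\varepsilon\sqrt D+CT(N\varepsilon)^{-1/3}$. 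The right side does not depend on $\nu,t,q$, so taking the supremum and choosing $\varepsilon=N^{-1/4}$ gives \eqref{eq:finite-constrained}, with no uniformity in $\nu$ of the discretization error needed.

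A smaller point concerns the extension to $\cQ_2$. If $q\wedge Q$ means capping the matrix values, this does not preserve monotone paths when $D\ge2$, because the spectral cap $x\mapsto\min(x,Q)$ is not operator monotone. For example, $\bigl(\begin{smallmatrix}1&1\\1&1\end{smallmatrix}\bigr)\preceq\bigl(\begin{smallmatrix}2&1\\1&1\end{smallmatrix}\bigr)$, but their caps at level $1$ are not ordered, so the capped path can leave $\cQ$. Use the time truncation $q^{(s)}(r)=q(r\wedge s)$ instead, as the paper does.
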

\begin{proof}
Fix $T,Q\in\R_+$ and put $Q':=Q+M_1T+1$. Let $N\in\N$, $\varepsilon>0$, $\eta\in\Delta_{m,N}$, $\nu\in\cP(\supp\mu)$, $t\in[0,T]$, and $q\in\cQ_\infty$ with $|q|_{L^\infty}\le Q$. For each $K$, Lemma~\ref{lem:finite-local-comparison} in both directions, using~\eqref{eq:finite-initial-W1}, gives
\begin{equation*}
 |f^{\eta,K}(t,P_Kq)-f^{\nu,K}(t,P_Kq)|\le2Q'W_1(\eta,\nu).
\end{equation*}
Combine this with Proposition~\ref{prop:finite-comparison}, the $L^1$ regularity of $\bar F_N^\eta$, and~\eqref{eq:projection-L1-bound}:
\begin{align*}
 |\bar F_N^\eta(t,q)-f^\nu(t,q)|
 &\le \sup_{\substack{\nu'\in\Delta_{m,N},\ p\in\cQ_\infty\\|p|_{L^\infty}\le Q'}}|\bar F_N^{\nu'}(0,p)-\varphi^{\nu'}(p)|\\
 &\quad+\varepsilon\sqrt D+C t(N\varepsilon)^{-1/3}
 +2Q'W_1(\eta,\nu)\\
 &\quad+\frac{\sqrt D(Q+tM_1)}K
 +|f^{\nu,K}(t,P_Kq)-f^\nu(t,q)|.
\end{align*}
Take $K\to\infty$ with all other parameters fixed and use~\eqref{eq:HJ-discrete-limit}. We obtain
\begin{equation}\label{eq:finite-constrained-error}
 \begin{aligned}
 |\bar F_N^\eta(t,q)-f^\nu(t,q)|
 &\le \sup_{\substack{\nu'\in\Delta_{m,N},\ p\in\cQ_\infty\\|p|_{L^\infty}\le Q'}}|\bar F_N^{\nu'}(0,p)-\varphi^{\nu'}(p)|\\
 &\quad+\varepsilon\sqrt D+C t(N\varepsilon)^{-1/3}
 +2Q'W_1(\eta,\nu).
 \end{aligned}
\end{equation}
Choose $\varepsilon=N^{-1/4}$. With $\eta=\nu\in\Delta_{m,N}$, the bound tends to zero uniformly over these measures and $t\in[0,T]$, $|q|_{L^\infty}\le Q$, proving~\eqref{eq:finite-constrained}. With $\eta=\nu_N\to\nu$ in $W_1$, it proves~\eqref{eq:finite-constrained-sequential} for bounded paths.

For $q\in\cQ_2$, put $q^{(s)}(r):=q(r\wedge s)$, $s<1$. These paths are bounded and $|q-q^{(s)}|_{L^1}\to0$ as $s\uparrow1$. Equations~\eqref{eq:regularity} and~\eqref{eq:solution-regularity} give
\begin{equation*}
 |\bar F_N^{\nu_N}(t,q)-f^\nu(t,q)|
 \le2|q-q^{(s)}|_{L^1}+|\bar F_N^{\nu_N}(t,q^{(s)})-f^\nu(t,q^{(s)})|.
\end{equation*}
Let $N\to\infty$ and then $s\uparrow1$ to finish the sequential assertion.
\end{proof}

To convert the previous theorem into a proof of \eqref{eq:main}, we need one last result. Namely, we need to verify that $f^\nu$ is continuous in $\nu$. This way we can transfer our control for $\nu \in \Delta_{m,N}$ to a control for every $\nu \in \cP(\supp \mu)$.
\begin{lemma}\label{lem:finite-measure-continuity}
Suppose that $\mu$ has finite support. For every $\nu,\nu'\in\cP(\supp\mu)$, $t\in\R_+$, and $q\in\cQ_\infty$,
\begin{equation*}
 |f^\nu(t,q)-f^{\nu'}(t,q)|\le2(tM_1+|q|_{L^\infty})W_1(\nu,\nu').
\end{equation*}
\end{lemma}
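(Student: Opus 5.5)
The plan is to transfer the finite-$N$ Lipschitz bound of Lemma~\ref{lem:type-continuity} to the limit, using the sequential convergence~\eqref{eq:finite-constrained-sequential} of Theorem~\ref{thm:finite-constrained}. This mirrors the proof of Lemma~\ref{lem:finite-initial-W1} at $t=0$.

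Fix $\nu,\nu'\in\cP(\supp\mu)$, $t\in\R_+$ and $q\in\cQ_\infty$. Since $q$ is bounded, it lies in $\cQ_2$, so~\eqref{eq:finite-constrained-sequential} applies at $(t,q)$.

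\textbf{Step 1: approximating types.} Choose $\nu_N,\nu_N'\in\Delta_{m,N}$ with $W_1(\nu_N,\nu)\to0$ and $W_1(\nu_N',\nu')\to0$, for instance by rounding the masses to multiples of $N^{-1}$, as in the proof of Proposition~\ref{prop:finite-initial}. Every element of $\Delta_{m,N}$ is of the form $N^{-1}\sum_{i=1}^N\delta_{x_i}$ with $x_i\in\supp\mu\subset\B$. Hence Lemma~\ref{lem:type-continuity} gives
\begin{equation*}
 |\bar F_N^{\nu_N}(t,q)-\bar F_N^{\nu_N'}(t,q)|\le2\bigl(tM_1+|q|_{L^\infty}\bigr)W_1(\nu_N,\nu_N').
\end{equation*}

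\textbf{Step 2: passing to the limit.} Let $N\to\infty$. The left-hand side converges to $|f^\nu(t,q)-f^{\nu'}(t,q)|$ by~\eqref{eq:finite-constrained-sequential}, applied once to $(\nu_N,\nu)$ and once to $(\nu_N',\nu')$. On the right-hand side, the triangle inequality for $W_1$ gives $W_1(\nu_N,\nu_N')\to W_1(\nu,\nu')$. This yields the claim.

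\textbf{Expected obstacle.} The only delicate point is making sure the sequential convergence genuinely holds for an arbitrary limit measure $\nu\in\cP(\supp\mu)$, including boundary measures, and not only along the grid. Theorem~\ref{thm:finite-constrained} states it in this generality. Its proof used the comparison bound $|f^{\eta,K}-f^{\nu,K}|\le2Q'W_1(\eta,\nu)$, which carries a constant $Q'$ rather than $tM_1+|q|_{L^\infty}$. I will not use that bound here. That is precisely why I route the argument through the finite-$N$ coupling estimate, which carries the sharper constant stated in the lemma. No circularity arises, because Theorem~\ref{thm:finite-constrained} does not rely on the present lemma.
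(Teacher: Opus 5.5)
Your proposal is correct and is essentially the paper's own proof: approximate $\nu,\nu'$ by types in $\Delta_{m,N}$, apply Lemma~\ref{lem:type-continuity}, and pass to the limit using~\eqref{eq:finite-constrained-sequential} and continuity of $W_1$. The paper adds one remark beyond the lemma as stated. For two finitely supported targets that do not come from a common $\mu$, it applies Theorem~\ref{thm:finite-constrained} with a reference measure supported on the union of their supports, since $\varphi^\nu$ and $f^\nu$ depend only on the target. That extension is used later in Proposition~\ref{prop:general-initial}, but your argument fully covers the statement given here.
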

\begin{proof}
Choose $\nu_N,\nu_N'\in\Delta_{m,N}$ converging to $\nu,\nu'$ in $W_1$. Lemma~\ref{lem:type-continuity} gives the same bound for $\bar F_N^{\nu_N}$ and $\bar F_N^{\nu_N'}$. Pass to the limit using~\eqref{eq:finite-constrained-sequential} and continuity of $W_1$. For any two finitely supported target measures, apply Theorem~\ref{thm:finite-constrained} with a reference probability measure whose support is the union of their supports; the initial data and solutions depend only on the target measure.
\end{proof}

Combining all the previous results of the section, we obtain the following weak-version of the main result. This is only a weaker version because it assumes that $\mu$ has a finite support. We explain in the next section how to deduce the main result from this weak version.

\begin{corollary}\label{cor:finite-main}
If $\mu$ has finite support, then~\eqref{eq:main} holds. For every $T,Q\in\R_+$, the convergence is uniform over $t\in[0,T]$ and $q\in\mcl Q_{\infty,\leq Q}$.
\end{corollary}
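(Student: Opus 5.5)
The plan is to combine Lemma~\ref{lem:type-minimum}, the normalization~\eqref{eq:normalization}, the Stirling bound~\eqref{eq:stirling}, and the uniform convergence of Theorem~\ref{thm:finite-constrained}. Fix $T,Q$ and take $t\in[0,T]$, $q\in\cQ_\infty$ with $|q|_{L^\infty}\le Q$. By~\eqref{eq:normalization} and~\eqref{eq:stirling},
\begin{equation*}
 \Bigl|\min_{\nu\in\Delta_{m,N}}\bar F_{N,\mu^{\otimes N},\Sigma_N(\nu)}(t,q)-\min_{\nu\in\Delta_{m,N}}\bigl\{\Ent(\nu\mid\mu)+\bar F_N^\nu(t,q)\bigr\}\Bigr|\le\frac{m\log(N+1)}N .
\end{equation*}
By Theorem~\ref{thm:finite-constrained}, we may replace $\bar F_N^\nu$ by $f^\nu$ at a cost that vanishes uniformly in $\nu\in\Delta_{m,N}$, $t$, and $q$. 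Lemma~\ref{lem:type-minimum} with $s_0=s_{0,N}:=\min\{\tfrac12,\sqrt{\log(N+1)/N}\}$ then shows that $\bar F_{N,\mu^{\otimes N}}(t,q)$ differs from $\min_{\nu\in\Delta_{m,N}}\{\Ent(\nu\mid\mu)+f^\nu(t,q)\}$ by an error $o(1)$, uniformly over the same range of $(t,q)$.

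Next I would replace the minimum over the grid by the minimum over all of $\cP(\supp\mu)$. Set $J(\nu):=\Ent(\nu\mid\mu)+f^\nu(t,q)$. Since $\supp\mu$ is finite, $\nu\mapsto\Ent(\nu\mid\mu)$ is continuous on the compact simplex, and hence uniformly continuous. By Lemma~\ref{lem:finite-measure-continuity}, $\nu\mapsto f^\nu(t,q)$ is Lipschitz in $W_1$ with constant $2(TM_1+Q)$. Together these give a modulus of continuity for $J$ that is uniform in $(t,q)$ in the given range. Consequently $J$ attains its minimum on $\cP(\supp\mu)$, which proves that the minimum in~\eqref{eq:main} is attained. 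The minimum over $\Delta_{m,N}$ is at least this global minimum, and it exceeds it by at most the modulus evaluated at $\sup_{\nu}\inf_{\nu'\in\Delta_{m,N}}W_1(\nu,\nu')$. This mesh tends to $0$: rounding the masses gives a point of the grid within $W_1$-distance $O(m/N)$, since $\diam\B\le2$. This yields~\eqref{eq:main} with uniform convergence over bounded $q\in\cQ_\infty$, $t\in[0,T]$.

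Finally I would extend to general $q\in\cQ_2$ by truncation, as at the end of the proof of Theorem~\ref{thm:finite-constrained}. Put $q^{(s)}(r):=q(r\wedge s)$. Both $\bar F_{N,\mu^{\otimes N}}(t,\cdot)$ (by~\eqref{eq:regularity}) and each $f^\nu(t,\cdot)$ (by~\eqref{eq:solution-regularity}) are $1$-Lipschitz in $L^1$, with constants uniform in $\nu$. Hence the right-hand side of~\eqref{eq:main} is $1$-Lipschitz in $q$ as well, because a minimum of uniformly Lipschitz functions is Lipschitz. Letting $N\to\infty$ and then $s\uparrow1$ gives the pointwise statement. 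Attainment at unbounded $q$ follows by the same compactness and continuity argument, using the continuity of $\nu\mapsto f^\nu(t,q)$ obtained by approximating $q$ with $q^{(s)}$ uniformly in $\nu$.

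The step needing the most care is the uniformity in the passage from the grid minimum to the continuum minimum. Here I would rely on the equicontinuity in $\nu$ from Lemma~\ref{lem:finite-measure-continuity}, whose constant depends only on $T$ and $Q$, rather than on pointwise convergence. The other point to check is the choice of $s_{0,N}$ in Lemma~\ref{lem:type-minimum}, so that its right-hand side, with $C=TM_0+(1+\sqrt D)Q$, vanishes; $s_{0,N}$ as above makes both terms $O(\sqrt{\log N/N})$.
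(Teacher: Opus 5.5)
Your proposal is correct and follows essentially the same route as the paper. You reduce to the minimum over $\Delta_{m,N}$ via Lemma~\ref{lem:type-minimum} with the same $s_{0,N}$, together with \eqref{eq:normalization}, \eqref{eq:stirling} and Theorem~\ref{thm:finite-constrained}; you then pass to $\cP(\supp\mu)$ using the uniform modulus from Lemma~\ref{lem:finite-measure-continuity}, uniform continuity of entropy and an $O(1/N)$ rounding of the masses, and you extend to $q\in\cQ_2$ by $L^1$ truncation. Your explicit check that the minimum is still attained for unbounded $q\in\cQ_2$, via approximation by $q^{(s)}$ uniformly in $\nu$, is a small point the paper leaves implicit at this stage.
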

\begin{proof}
Fix $T,Q\in\R_+$ and assume $\supp\mu=\{a_1,\dots,a_m\}$. We identify the limit uniformly over $t\in[0,T]$ and $q\in \mcl Q_{\infty,\leq Q}$.

\smallskip
\noindent\emph{Step 1: Reduction to a minimum over $\Delta_{m,N}$.}
Let $s_{0,N}:=\min\{1/2,\sqrt{\log(N+1)/N}\}$. Then we have
\begin{equation*}
 \lim_{N\to\infty}s_{0,N}=0
 \qquad\text{and}\qquad
 \lim_{N\to\infty}\frac{\log(N+1)}{Ns_{0,N}}=0.
\end{equation*}
By Lemma~\ref{lem:type-minimum}, the difference between $\bar F_{N,\mu^{\otimes N}}(t,q)$ and the minimum of $\bar F_{N,\mu^{\otimes N},\Sigma_N(\nu)}(t,q)$ is between zero and $Cs_{0,N}+m\log(N+1)/(Ns_{0,N})$. By~\eqref{eq:normalization} and~\eqref{eq:stirling}, we therefore have
\begin{align*}
 &\left|\bar F_{N,\mu^{\otimes N}}(t,q)
 -\min_{\nu\in\Delta_{m,N}}\{\Ent(\nu\mid\mu)+f^\nu(t,q)\}\right|\\
 &\quad\le Cs_{0,N}+\frac{m\log(N+1)}{Ns_{0,N}}
 +\frac{m\log(N+1)}{N}
 +\sup_{\nu\in\Delta_{m,N}}|\bar F_N^\nu(t,q)-f^\nu(t,q)|.
\end{align*}
By \eqref{eq:finite-constrained} in Theorem~\ref{thm:finite-constrained}, the right-hand side tends to zero uniformly over $t\in[0,T]$ and $q\in \mcl Q_{\infty,\leq Q}$.

\smallskip
\noindent\emph{Step 2: Passing from $\Delta_{m,N}$ to $\cP(\supp\mu)$.}
For $t\in[0,T]$ and $q\in \mcl Q_{\infty,\leq Q}$, define $G_{t,q}:\cP(\supp\mu)\to\R$ by
\begin{equation*}
 G_{t,q}(\nu):=\Ent(\nu\mid\mu)+f^\nu(t,q).
\end{equation*}
By Lemma~\ref{lem:finite-measure-continuity}, we have
\begin{equation*}
 |G_{t,q}(\nu)-G_{t,q}(\nu')|
 \le|\Ent(\nu\mid\mu)-\Ent(\nu'\mid\mu)|
 +2(Q+M_1T)W_1(\nu,\nu').
\end{equation*}
The entropy is uniformly continuous on the compact set $\cP(\supp\mu)$. Define $\nu^{[N]}:\cP(\supp\mu)\to\Delta_{m,N}$ by taking $\lfloor N\nu_j\rfloor$ spins equal to $a_j$ for $j\in\{1,\ldots,m-1\}$ and assigning the remaining spins to $a_m$. For every $\nu\in\cP(\supp\mu)$, we then have
\begin{equation*}
 \sum_{j=1}^m|\nu_j-\nu^{[N]}(\nu)(\{a_j\})|\le\frac{2(m-1)}N.
\end{equation*}
Since the total variation distance between two measures on $\supp\mu$ is half the sum of the differences of their atom masses, a coupling gives
\begin{equation*}
 \sup_{\nu\in\cP(\supp\mu)}W_1\bigl(\nu,\nu^{[N]}(\nu)\bigr)
 \le
 \frac1N
 (m-1)\diam(\supp\mu).
\end{equation*}
Since $\Delta_{m,N}\subset\cP(\supp\mu)$, the preceding two estimates give
\begin{equation*}
 \lim_{N\to\infty}\sup_{\substack{0\le t\le T\\q\in \mcl Q_{\infty,\leq Q}}}
 \left|\min_{\nu\in\Delta_{m,N}}G_{t,q}(\nu)-\min_{\nu\in\cP(\supp\mu)}G_{t,q}(\nu)\right|=0.
\end{equation*}
Combining Steps 1 and 2 proves~\eqref{eq:main} and its uniformity for the finite-support case. For $q\in\cQ_2$, the conclusion follows by approximating $q$ in $L^1$ by bounded paths and using~\eqref{eq:regularity} and~\eqref{eq:solution-regularity}.
\end{proof}

\section{Extension to every reference measure on the unit ball} \label{sec:ext-unit-ball}

Throughout this section, $\mu$ is an arbitrary probability measure supported on $\B$. The initial data and solutions were defined in~\eqref{eq:initial-definition} and Definition~\ref{def:spin-glass-solution}. We approximate them and the reference measure by finite discretizations to prove Theorem~\ref{thm:main}.

\begin{proposition}\label{prop:general-initial}
For every $\nu,\nu'\in\cP(\supp\mu)$, $t\in\R_+$, and $q\in\cQ_\infty$, the solutions of Definition~\ref{def:spin-glass-solution} satisfy
\begin{equation}\label{eq:measure-continuity}
 |f^\nu(t,q)-f^{\nu'}(t,q)|\le2(tM_1+|q|_{L^\infty})W_1(\nu,\nu').
\end{equation}
In addition, for every $(t,q) \in\R_+ \times \cQ_2$, the map $\nu\mapsto f^\nu(t,q)$ is continuous in $W_1$.
\end{proposition}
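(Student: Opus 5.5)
The plan is to reduce to the finite-support case, Lemma~\ref{lem:finite-measure-continuity}, by discretizing the target measures, using that $f^\nu$ depends only on $\nu$ and not on the reference measure $\mu$. The quantity $\varphi^\nu$ in~\eqref{eq:initial-definition} is defined intrinsically from $\nu$ through $\psi_{\nu,h}$, so $f^\nu$ is intrinsic to $\nu$.

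\emph{Step 1: discretization map.} Fix $\delta>0$ and a Borel partition of $\B$ into finitely many cells of diameter at most $\delta$, with a point $c_j$ chosen in each cell. Let $T_\delta$ send each cell to its point. For $\nu\in\cP(\supp\mu)$, the pushforward $\nu_\delta:=(T_\delta)_\#\nu$ is finitely supported, and $W_1(\nu,\nu_\delta)\le\delta$.

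\emph{Step 2: continuity of the initial data under discretization.} Show that $|\varphi^\nu(q)-\varphi^{\nu_\delta}(q)|\le 2|q|_{L^\infty}\delta$ for $q\in\cQ_\infty$. I would prove this directly from Lemma~\ref{lem:coupling} at $N=1$, $t=0$. Given $h\in\bb(\supp\mu)$, one wants to compare $\psi_{\nu,h}(q)+\int h\,\mathrm d\nu$ with a corresponding quantity for $\nu_\delta$. The difficulty is that $h$ is not constant on cells, so $e^h\nu$ does not push forward to a tilt of $\nu_\delta$ by a function of $c_j$ alone. Its pushforward is $e^{\tilde h}\nu_\delta$, with $\tilde h(c_j)=\log\bigl(\nu(\text{cell}_j)^{-1}\int_{\text{cell}_j} e^h\,\mathrm d\nu\bigr)$. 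Jensen gives $\int\tilde h\,\mathrm d\nu_\delta\ge\int h\,\mathrm d\nu$. The normalized probability measures $e^h\nu/Z$ and its pushforward admit a coupling moving points by at most $\delta$. Lemma~\ref{lem:coupling} then gives $\psi_{\nu_\delta,\tilde h}\le\psi_{\nu,h}+2\delta|q|_{L^\infty}$ after accounting for the common normalization $-\log Z$. Hence $\varphi^{\nu_\delta}\ge\varphi^\nu-2\delta|q|_{L^\infty}$.

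The reverse inequality takes any tilt $k$ of $\nu_\delta$ and lifts it to $h:=k\circ T_\delta$. Then $e^h\nu$ pushes forward to $e^k\nu_\delta$ and $\int h\,\mathrm d\nu=\int k\,\mathrm d\nu_\delta$, so the same coupling bound applies. Since both $\nu_\delta$ and $\nu'_\delta$ live on the finite set $\{c_j\}$, Lemma~\ref{lem:finite-initial-W1} already handles pairs of discretized measures.

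\emph{Step 3: propagate to $f$.} Both $f^{\nu}$ and $f^{\nu_\delta}$ are limits of the finite-dimensional solutions $f^{\cdot,K}$. Apply Lemma~\ref{lem:finite-local-comparison} in both directions to $f^{\nu,K}$ and $f^{\nu_\delta,K}$. Their initial data differ by at most $2Q'\delta$ on $|b|_{\ell^\infty}\le Q'=|q|_{L^\infty}+M_1t+1$, where $Q'$ is evaluated at the paths $L_Kb$. Letting $K\to\infty$ gives $|f^\nu(t,q)-f^{\nu_\delta}(t,q)|\le 2(|q|_{L^\infty}+M_1t+1)\delta$.

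This bound carries an extra ``$+1$'', but it multiplies $\delta$, which is sent to zero. Combine it with Lemma~\ref{lem:finite-measure-continuity} applied to $\nu_\delta,\nu'_\delta$, using a reference measure supported on $\{c_j\}$. The triangle inequality and $W_1(\nu_\delta,\nu'_\delta)\le W_1(\nu,\nu')+2\delta$ then give~\eqref{eq:measure-continuity} after sending $\delta\to0$.

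\emph{Step 4: continuity for $q\in\cQ_2$.} Truncate to $q^{(s)}(r)=q(r\wedge s)$. Use the uniform $L^1$-Lipschitz bound~\eqref{eq:solution-regularity}, which is independent of $\nu$, together with~\eqref{eq:measure-continuity} at the bounded path $q^{(s)}$. An $\epsilon/3$ argument then yields $W_1$-continuity.

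The main obstacle is Step 2, specifically the mismatch between tilts $h$ that are not cell-measurable and the discretized measure. The Jensen/pushforward trick above is what I would try first. One must also check carefully that Lemma~\ref{lem:coupling} applies to non-normalized measures $e^h\nu$ after factoring out the total mass.
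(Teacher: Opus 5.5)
Your proposal is correct and follows the paper's proof essentially step for step. The paper uses the same cellwise log-averaged tilt with Jensen and Lemma~\ref{lem:coupling} at $N=1$, $t=0$ for the initial data, then Lemma~\ref{lem:finite-local-comparison} in both directions for $f^{\nu,K}$, Lemma~\ref{lem:finite-measure-continuity} for the discretized pair, and the truncation $q^{(s)}$ for $\cQ_2$. Your cells partition $\B$ rather than using a net inside $\supp\mu$; this is harmless here because $f^\nu$ depends only on $\nu$. One small slip: in Step 2 the inequality you actually need is $\psi_{\nu_\delta,\tilde h}(q)\ge\psi_{\nu,h}(q)-2\delta|q|_{L^\infty}$, not the $\le$ you wrote, but Lemma~\ref{lem:coupling} is two-sided, so this is immediate.
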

\begin{proof}
The regularity of every $\varphi^\nu$ follows from Lemma~\ref{lem:initial-regularity}. Since $\supp\mu$ is compact, choose for each $\delta>0$ a finite $\delta$-net and a Borel nearest-point map $\mcl T_\delta:\supp\mu\to\supp\mu$. Then
\begin{equation}\label{eq:discretization-distance}
 |\mcl T_\delta(a)-a|\le\delta,\qquad a\in\supp\mu.
\end{equation}
Put $\nu_\delta:=(\mcl T_\delta)_\#\nu$, so $W_1(\nu_\delta,\nu)\le\delta$.

Fix $q\in\cQ_\infty$ and $\delta>0$. Write $\mcl T_\delta(\supp\mu)=\{a_1,\ldots,a_m\}$ and let $C_j:=\mcl T_\delta^{-1}(\{a_j\})$ for $j\in\{1,\ldots,m\}$. Fix $h\in\bb(\supp\mu)$, and define $h_\delta:\{a_1,\ldots,a_m\}\to\R$ by
\begin{equation*}
 h_\delta(a_j):=\begin{cases}
 \displaystyle\log\left(\frac1{\nu(C_j)}\int_{C_j}e^{h(a)}\,\nu(\mathrm da)\right),&\nu(C_j)>0,\\
 0,&\nu(C_j)=0.
 \end{cases}
\end{equation*}
Extend $h_\delta$ to a bounded Borel function on $\supp\mu$. By construction, we have
\begin{equation*}
 (\mcl T_\delta)_\#(e^h\nu)=e^{h_\delta}\nu_\delta
 \quad\text{and}\quad
 \int e^h\,\mathrm d\nu=\int e^{h_\delta}\,\mathrm d\nu_\delta.
\end{equation*}
Recall from~\eqref{eq:initial-definition} that $\psi_{\nu,h}(q)=\bar F_{1,e^h\nu}(0,q)$ and $\psi_{\nu_\delta,h_\delta}(q)=\bar F_{1,e^{h_\delta}\nu_\delta}(0,q)$. The two finite reference measures $e^h\nu$ and $e^{h_\delta}\nu_\delta$ have a common positive mass. Normalize them by this mass and couple the resulting probability measures through $\mcl T_\delta$. Lemma~\ref{lem:coupling} with $N=1$ and $t=0$ bounds the difference of their free energies; restoring the common mass changes each free energy by the same constant. Hence, we have
\begin{equation}\label{eq:tilt-discretization}
 |\psi_{\nu,h}(q)-\psi_{\nu_\delta,h_\delta}(q)|
 \le2|q|_{L^\infty}\delta.
\end{equation}
Jensen's inequality on each $C_j$ gives
\begin{equation*}
 \int_{C_j}h\,\mathrm d\nu\le\nu(C_j)h_\delta(a_j),
 \qquad j\in\{1,\ldots,m\}.
\end{equation*}
Combining this inequality with~\eqref{eq:tilt-discretization} and taking the supremum over $h$ in~\eqref{eq:initial-definition}, we obtain
\begin{equation*}
 \varphi^\nu(q)\le\varphi^{\nu_\delta}(q)+2|q|_{L^\infty}\delta.
\end{equation*}
Conversely, for any bounded $h_\delta$ on $\mcl T_\delta(\supp\mu)$, choose $h:=h_\delta\circ\mcl T_\delta$. In this case $\int h\,\mathrm d\nu=\int h_\delta\,\mathrm d\nu_\delta$, and~\eqref{eq:tilt-discretization} applies. Taking the supremum over $h_\delta$ gives the reverse inequality, and therefore
\begin{equation}\label{eq:initial-discretization}
 |\varphi^\nu(q)-\varphi^{\nu_\delta}(q)|
 \le2|q|_{L^\infty}\delta,
 \qquad q\in\cQ_\infty.
\end{equation}

For $b\in\cQ^K$,~\eqref{eq:initial-discretization} gives
\begin{equation*}
 |\varphi^\nu(L_Kb)-\varphi^{\nu_\delta}(L_Kb)|\le2|b|_{\ell^\infty}\delta.
\end{equation*}
Apply Lemma~\ref{lem:finite-local-comparison} in both directions to the already-defined solutions $f^{\nu,K}$ and $f^{\nu_\delta,K}$. For $t\le T$ and $|a|_{\ell^\infty}\le Q$,
\begin{equation}\label{eq:general-finite-comparison}
 |f^{\nu,K}(t,a)-f^{\nu_\delta,K}(t,a)|\le2(Q+M_1T+1)\delta.
\end{equation}
Taking $a=P_Kq$ and letting $K\to\infty$ in~\eqref{eq:HJ-discrete-limit}, we can deduce from the previous display that $f^{\nu_\delta}\to f^\nu$ uniformly on $[0,T]\times\mcl Q_{\infty,\le Q}$. This means that if we approximate the measures $\nu$ and $\nu'$ by $\nu_\delta$ and $\nu'_{\delta'}$, apply Lemma~\ref{lem:finite-measure-continuity}, and let $\delta,\delta'\downarrow0$, we obtain~\eqref{eq:measure-continuity}.

For $q\in\cQ_2$, put $q^{(s)}(r):=q(r\wedge s)$ for $s<1$. If $|q|_{L^2}\le R$, monotonicity and the Cauchy--Schwarz inequality give
\begin{equation}\label{eq:uniform-truncation}
 |q^{(s)}|_{L^\infty}\le\frac R{\sqrt{1-s}},
 \qquad |q-q^{(s)}|_{L^1}\le2R\sqrt{1-s}.
\end{equation}
Indeed, $(1-s)|q(s)|^2\le\int_s^1|q(r)|^2\dd r$, and $|q-q^{(s)}|_{L^1}\le\int_s^1|q(r)|\dd r+(1-s)|q(s)|$. Apply~\eqref{eq:solution-regularity} at $q$ and $q^{(s)}$, and~\eqref{eq:measure-continuity} at the bounded path. We obtain, for all target measures,
\begin{equation}\label{eq:L2-measure-continuity}
 \sup_{\substack{0\le t\le T,\ q\in\cQ_2\\|q|_{L^2}\le R}}
 |f^\nu(t,q)-f^{\nu'}(t,q)|
 \le4R\sqrt{1-s}+2\left(TM_1+\frac R{\sqrt{1-s}}\right)W_1(\nu,\nu').
\end{equation}
First choose $s$ close to $1$ and then let $W_1(\nu,\nu')\to0$. This proves continuity and also uniform convergence of the finite discretizations on bounded subsets of $\cQ_2$. 
\end{proof}

For the remainder of the section, fix $\delta>0$ and the map $\mcl T_\delta$ satisfying~\eqref{eq:discretization-distance}, and let $\mu_\delta:=(\mcl T_\delta)_\#\mu$. Its support is finite and contained in $\supp\mu$. Applying Lemma~\ref{lem:coupling} to the product coupling induced by $\mcl T_\delta$ gives, for $t\in\R_+$ and $q\in\cQ_\infty$,
\begin{equation}\label{eq:reference-discretization}
 \sup_{N\in\N}|\bar F_{N,\mu^{\otimes N}}(t,q)-\bar F_{N,\mu_\delta^{\otimes N}}(t,q)|
 \le2\delta\bigl(tM_1+|q|_{L^\infty}\bigr).
\end{equation}
Indeed, the joint law of $(\sigma,\mcl T_\delta(\sigma))$ under $\mu^{\otimes N}$ has marginals $\mu^{\otimes N}$ and $\mu_\delta^{\otimes N}$, and every pair in its support satisfies $N^{-1}\sum_{i=1}^N|\sigma_{\cdot,i}-\mcl T_\delta(\sigma_{\cdot,i})|\le\delta$. Thus~\eqref{eq:reference-discretization} is independent of the number of points in $\supp\mu_\delta$.

Recall the relative entropy $\Ent(\cdot\mid\mu)$ introduced in \eqref{eq:relative-entropy}. Define $J,J_\delta:\R_+\times\cQ_2\to\R$ by
\begin{align}\label{eq:envelopes}
 J(t,q):=\inf_{\nu\in\cP(\supp\mu)}\bigl\{\Ent(\nu\mid\mu)+f^\nu(t,q)\bigr\}\quad\text{and}\quad
 J_\delta(t,q):=\inf_{\eta\in\cP(\supp\mu_\delta)}\bigl\{\Ent(\eta\mid\mu_\delta)+f^\eta(t,q)\bigr\}.
\end{align}
Both quantities in~\eqref{eq:envelopes} are finite. Indeed, entropy is nonnegative, and~\eqref{eq:solution-regularity} and Lemma~\ref{lem:initial-regularity} give $|f^\nu(t,q)|\le M_0t+|q|_{L^1}$ uniformly in $\nu$; the choices $\nu=\mu$ and $\eta=\mu_\delta$ have zero entropy.

\begin{lemma}\label{lem:envelope-discretization}
For every $t\in\R_+$ and $q\in\cQ_\infty$, we have
\begin{equation}\label{eq:envelope-discretization}
 |J(t,q)-J_\delta(t,q)|\le2\bigl(tM_1+|q|_{L^\infty}\bigr)\delta.
\end{equation}
\end{lemma}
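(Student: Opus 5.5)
We prove the two inequalities $J_\delta\le J+2(tM_1+|q|_{L^\infty})\delta$ and $J\le J_\delta+2(tM_1+|q|_{L^\infty})\delta$ separately. Both follow from transporting measures along $\mcl T_\delta$ and invoking the $W_1$-continuity~\eqref{eq:measure-continuity} from Proposition~\ref{prop:general-initial}. Since $\supp\mu_\delta\subset\supp\mu$, every $\eta\in\cP(\supp\mu_\delta)$ is also an element of $\cP(\supp\mu)$, and $f^\eta$ is the same object in both envelopes. Indeed, $\varphi^\eta$ depends only on $\eta$, and hence so does the solution of Definition~\ref{def:spin-glass-solution}.

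\emph{Upper bound for $J_\delta$.} Fix $\nu\in\cP(\supp\mu)$ with $\Ent(\nu\mid\mu)<\infty$, and put $\eta:=(\mcl T_\delta)_\#\nu\in\cP(\supp\mu_\delta)$. By~\eqref{eq:discretization-distance}, the coupling $(a,\mcl T_\delta a)$ gives $W_1(\nu,\eta)\le\delta$, so~\eqref{eq:measure-continuity} yields
\[
|f^\eta(t,q)-f^\nu(t,q)|\le2(tM_1+|q|_{L^\infty})\delta.
\]
The data-processing inequality for relative entropy under the pushforward by $\mcl T_\delta$ gives $\Ent(\eta\mid\mu_\delta)\le\Ent(\nu\mid\mu)$. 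Taking the infimum over $\nu$ then gives $J_\delta\le J+2(tM_1+|q|_{L^\infty})\delta$.

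\emph{Upper bound for $J$.} Fix $\eta\in\cP(\supp\mu_\delta)$ with $\eta\ll\mu_\delta$. We lift $\eta$ by disintegrating $\mu$ along $\mcl T_\delta$: with $C_j=\mcl T_\delta^{-1}(\{a_j\})$, set
\[
\nu:=\sum_j\frac{\eta(\{a_j\})}{\mu(C_j)}\1_{C_j}\mu .
\]
Equivalently, $\frac{\mathrm d\nu}{\mathrm d\mu}=\frac{\mathrm d\eta}{\mathrm d\mu_\delta}\circ\mcl T_\delta$. Then $(\mcl T_\delta)_\#\nu=\eta$, and a direct computation gives $\Ent(\nu\mid\mu)=\Ent(\eta\mid\mu_\delta)$. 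Atoms with $\mu(C_j)=0$ carry no $\eta$-mass because $\eta\ll\mu_\delta$, so the formula is well defined. Again $W_1(\nu,\eta)\le\delta$, and~\eqref{eq:measure-continuity} gives $J\le\Ent(\eta\mid\mu_\delta)+f^\eta(t,q)+2(tM_1+|q|_{L^\infty})\delta$. Taking the infimum over $\eta$ proves the second inequality.

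There is no substantial obstacle. The only points to check are the data-processing inequality, a standard fact or a one-line Jensen argument on each cell $C_j$, and the exact entropy identity for the lifted measure. Measures with infinite entropy can be ignored in both infima, since both envelopes are finite.
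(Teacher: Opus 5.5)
Your proposal is correct and follows the paper's proof essentially step for step. For $J_\delta\le J+2(tM_1+|q|_{L^\infty})\delta$ you push forward along $\mcl T_\delta$ and use entropy contraction; for the reverse inequality you use the cell-wise lift $\sum_j\eta(\{a_j\})\,\mu(\cdot\mid C_j)$ with its exact entropy identity. In both directions you close with $W_1\le\delta$ and \eqref{eq:measure-continuity}.
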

\begin{proof}
Write $\supp\mu_\delta=\{a_1,\ldots,a_m\}$ and define the cells $C_j:=\mcl T_\delta^{-1}(\{a_j\})$. Then $\mu(C_j)=\mu_\delta(\{a_j\})>0$, and these cells cover $\supp\mu$ up to a $\mu$-null set.

\smallskip
\noindent\emph{Entropy contraction.}
Fix $\nu\in\cP(\supp\mu)$ with $\Ent(\nu\mid\mu)<\infty$ and let $\eta:=(\mcl T_\delta)_\#\nu$. 
The coupling $(a,\mcl T_\delta(a))$ gives $W_1(\nu,\eta)\le\delta$. In addition, relative entropy decreases under the measurable map $\mcl T_\delta$:
\begin{equation}\label{eq:entropy-contraction}
 \Ent(\eta\mid\mu_\delta)\le\Ent(\nu\mid\mu).
\end{equation}
Indeed, write $r:=\mathrm d\nu/\mathrm d\mu$. On every cell with $\mu(C_j)>0$, Jensen's inequality for $x\mapsto x\log x$ gives
\begin{equation*}
 \int_{C_j}r\log r\,\mathrm d\mu
 \ge\nu(C_j)\log\frac{\nu(C_j)}{\mu(C_j)}.
\end{equation*}
Summing these inequalities proves~\eqref{eq:entropy-contraction}. Combining it with~\eqref{eq:measure-continuity}, we obtain
\begin{align*}
 \Ent(\eta\mid\mu_\delta)+f^\eta(t,q)
 &\le\Ent(\nu\mid\mu)+f^\nu(t,q)
 +2\bigl(tM_1+|q|_{L^\infty}\bigr)\delta.
\end{align*}
Taking the infimum over such $\nu$ proves the first inequality.

\smallskip
\noindent\emph{Entropy-preserving lift.}
Fix $\eta\in\cP(\supp\mu_\delta)$ and let $\beta_j:=\eta(\{a_j\})$. Define $\widetilde\nu\in\cP(\supp\mu)$ by
\begin{equation}\label{eq:entropy-preserving-lift}
 \widetilde\nu:=\sum_{j=1}^m\beta_j\,\mu(\,\cdot\mid C_j).
\end{equation}
The pushforward $(\mcl T_\delta)_\#\widetilde\nu$ equals $\eta$, so $W_1(\widetilde\nu,\eta)\le\delta$. Moreover, the density of the measure in~\eqref{eq:entropy-preserving-lift} with respect to $\mu$ equals $\beta_j/\mu(C_j)$ on $C_j$. Consequently, we have
\begin{equation}\label{eq:entropy-preserving-identity}
 \Ent(\widetilde\nu\mid\mu)
 =\sum_{j=1}^m\beta_j\log\frac{\beta_j}{\mu(C_j)}
 =\Ent(\eta\mid\mu_\delta).
\end{equation}
Using~\eqref{eq:entropy-preserving-identity} and~\eqref{eq:measure-continuity}, we therefore obtain
\begin{align*}
 \Ent(\widetilde\nu\mid\mu)+f^{\widetilde\nu}(t,q)
 &\le\Ent(\eta\mid\mu_\delta)+f^\eta(t,q)
 +2\bigl(tM_1+|q|_{L^\infty}\bigr)\delta.
\end{align*}
Taking the infimum over $\eta$ completes~\eqref{eq:envelope-discretization}.
\end{proof}

\begin{proof}[Proof of Theorem~\ref{thm:main}]
\smallskip
\noindent\emph{Step 1: Uniform convergence on bounded paths.}
Fix $T,Q\in\R_+$. Since $\mu_\delta$ has finite support, Corollary~\ref{cor:finite-main} gives
\begin{equation}\label{eq:quantized-free-energy-limit}
 \lim_{N\to\infty}\sup_{\substack{0\le t\le T,\ q\in\mcl Q_{\infty,\le Q}}}
 |\bar F_{N,\mu_\delta^{\otimes N}}(t,q)-J_\delta(t,q)|=0.
\end{equation}
For every $(t,q)\in[0,T]\times\mcl Q_{\infty,\le Q}$, the triangle inequality gives
\begin{align*}
 |\bar F_{N,\mu^{\otimes N}}(t,q)-J(t,q)|
 &\le|\bar F_{N,\mu^{\otimes N}}(t,q)-\bar F_{N,\mu_\delta^{\otimes N}}(t,q)|\\
 &\quad+|\bar F_{N,\mu_\delta^{\otimes N}}(t,q)-J_\delta(t,q)|
 +|J_\delta(t,q)-J(t,q)|.
\end{align*}
By~\eqref{eq:reference-discretization}, Lemma~\ref{lem:envelope-discretization}, and~\eqref{eq:quantized-free-energy-limit}, we therefore obtain
\begin{equation}\label{eq:general-uniform-error}
 \limsup_{N\to\infty}\sup_{\substack{0\le t\le T,\ q\in\mcl Q_{\infty,\le Q}}}
 |\bar F_{N,\mu^{\otimes N}}(t,q)-J(t,q)|
 \le4\delta(TM_1+Q).
\end{equation}
Since $\delta>0$ is arbitrary, sending $\delta\downarrow0$ in~\eqref{eq:general-uniform-error} proves uniform convergence on $[0,T]\times\mcl Q_{\infty,\le Q}$.

\smallskip
\noindent\emph{Step 2: Extension to every $q\in\cQ_2$.}
Fix $t\in\R_+$ and $q\in\cQ_2$. For $s\in[0,1)$, take $q^{(s)}$ from~\eqref{eq:uniform-truncation}. Lemma~\ref{lem:regularity} and~\eqref{eq:solution-regularity} imply that both $q\mapsto\bar F_{N,\mu^{\otimes N}}(t,q)$ and $q\mapsto J(t,q)$ are $1$-Lipschitz in $L^1$; for $J$, this follows by taking the infimum of functions with the same Lipschitz constant. Hence
\begin{align*}
 |\bar F_{N,\mu^{\otimes N}}(t,q)-J(t,q)|
 &\le2|q-q^{(s)}|_{L^1}
 +|\bar F_{N,\mu^{\otimes N}}(t,q^{(s)})-J(t,q^{(s)})|.
\end{align*}
The second term tends to zero as $N\to\infty$ for fixed $s$, by Step 1. By~\eqref{eq:uniform-truncation}, the first term tends to zero as $s\uparrow1$. This proves~\eqref{eq:main} with the infimum in place of the minimum.

\smallskip
\noindent\emph{Step 3: Attainment of the infimum.}
The space $\cP(\supp\mu)$ is compact in $W_1$, since $\supp\mu$ is compact. Relative entropy is lower semicontinuous in $W_1$. By~\eqref{eq:L2-measure-continuity}, the map $\nu\mapsto f^\nu(t,q)$ is continuous in $W_1$ for every $q\in\cQ_2$. Therefore, the objective
\begin{equation*}
 \nu\longmapsto\Ent(\nu\mid\mu)+f^\nu(t,q)
\end{equation*}
is lower semicontinuous on $\cP(\supp\mu)$. It is finite at $\nu=\mu$, since $\Ent(\mu\mid\mu)=0$ and $f^\mu(t,q)$ is finite. Compactness therefore yields a minimizer, proving~\eqref{eq:main} as stated.
\end{proof}

\appendix
\section{Proof of the finite-dimensional comparison principle}\label{app:finite-local-comparison}

We give a proof of Lemma~\ref{lem:finite-local-comparison} using the weighted norms of Section~\ref{sec:constrained-HJ}. In particular, the argument uses only the monotonicity and regularity of $\xi_K$ on $\cQ^K$ and does not require a boundary condition on $\partial\cQ^K$.

\begin{proof}[Proof of Lemma~\ref{lem:finite-local-comparison}]
Fix $K\in\N$. Use the weighted inner product $\ip\cdot\cdot_K$ and let $|a|_K:=\ip a a_K^{1/2}$. Then
\begin{equation*}
 |a|_{\ell^1_K}\le|a|_K\le|a|_{\ell^\infty},
 \qquad a\in E_K,
\end{equation*}
and $|\cdot|_{\ell^1_K}$ and $|\cdot|_{\ell^\infty}$ are dual norms. By assumption, for some $C\in\R_+$,
\begin{equation}\label{eq:app-time-growth}
 |u(t,a)-u(0,a)|\vee|v(t,a)-v(0,a)|\le Ct,
 \qquad(t,a)\in\R_+\times\cQ^K.
\end{equation}

\smallskip
\noindent\emph{Localization and the boundary barrier.}
Put $R:=Q+M_1T+1$ and
\begin{equation*}
 S_R:=\sup_{a\in\cQ^K:\,|a|_{\ell^\infty}\le R}(u(0,a)-v(0,a)).
\end{equation*}
Radial projection onto this ball, together with the spatial Lipschitz bounds, gives
\begin{equation}\label{eq:app-initial-radial}
 u(0,a)-v(0,a)\le S_R+2(|a|_{\ell^\infty}-R)_+,
 \qquad a\in\cQ^K.
\end{equation}
For $\kappa\in(0,1/4)$, define the smooth function
\begin{equation}\label{eq:app-smoothed-norm}
 \rho_\kappa(a):=\sqrt{|a_K|^2+\kappa^2}.
\end{equation}
On $\cQ^K$, positivity of the increments implies $|a|_{\ell^\infty}=|a_K|$, so
\begin{equation*}
 |a|_{\ell^\infty}\le\rho_\kappa(a)\le|a|_{\ell^\infty}+\kappa,
 \qquad a\in\cQ^K.
\end{equation*}
Moreover, $\nabla_K\rho_\kappa$ has only its last block nonzero, equal to $Ka_K/\rho_\kappa(a)$, and hence $|\nabla_K\rho_\kappa|_{\ell^1_K}\le1$ on $E_K$. Choose a smooth nondecreasing $\theta_\kappa:\R\to\R_+$ with $0\le\theta_\kappa'\le1$, $\theta_\kappa(r)=0$ for $r\le0$, and $\theta_\kappa(r)\ge(r-\kappa)_+$. Let
\begin{equation}\label{eq:app-local-cutoff}
 \chi(t,a):=3\theta_\kappa(\rho_\kappa(a)+M_1t-R).
\end{equation}
The preceding bounds give
\begin{align}\label{eq:app-cutoff-properties}
 \partial_t\chi&\ge M_1|\nabla_K\chi|_{\ell^1_K},\nonumber\\
 \chi(t,a)&=0,\qquad 0\le t\le T,\quad a\in\cQ^K,\quad |a|_{\ell^\infty}\le Q,\nonumber\\
 \chi(t,a)&\ge3(|a|_{\ell^\infty}-R-2\kappa)_+,
 \qquad(t,a)\in\R_+\times\cQ^K.
\end{align}
In particular,
\begin{equation}\label{eq:app-initial-cutoff}
 \sup_{a\in\cQ^K}(u(0,a)-v(0,a)-\chi(0,a))\le S_R+6\kappa.
\end{equation}

For $a\in\operatorname{int}\cQ^K$, put $a_0:=0$ and define
\begin{equation}\label{eq:app-boundary-function}
 b(a):=\sum_{k=1}^K\tr[(a_k-a_{k-1})^{-1}].
\end{equation}
This smooth positive function diverges at the boundary. For $c\in\cQ^K$, differentiation gives
\begin{equation}\label{eq:app-boundary-monotone}
 \ip{\nabla_Kb(a)}c_K
 =-\sum_{k=1}^K\tr[(a_k-a_{k-1})^{-2}(c_k-c_{k-1})]\le0,
 \qquad c_0:=0.
\end{equation}
Thus $-\nabla_Kb(a)\in(\cQ^K)^*$.

\smallskip
\noindent\emph{Doubled variables and interior contact points.}
Suppose the comparison fails. There are $t_*\in(0,T]$ and $a_*\in\cQ^K$, with $|a_*|_{\ell^\infty}\le Q$, such that $u(t_*,a_*)-v(t_*,a_*)>S_R$. Fix $T_*>T$. Choose $\kappa$, and then $\gamma,\beta>0$, small enough that
\begin{equation*}
 u(t_*,a_*)-v(t_*,a_*)-\chi(t_*,a_*)-\gamma t_*
 -\frac{2\beta}{T_*-t_*}>S_R+12\kappa.
\end{equation*}
By continuity and density of $\operatorname{int}\cQ^K$, choose $\widehat a_*\in\operatorname{int}\cQ^K$ close to $a_*$, and then $\sigma>0$ small enough that
\begin{equation}\label{eq:app-diagonal-gap}
 u(t_*,\widehat a_*)-v(t_*,\widehat a_*)-\chi(t_*,\widehat a_*)-\gamma t_*
 -\frac{2\beta}{T_*-t_*}-\sigma b(\widehat a_*)>S_R+12\kappa.
\end{equation}
Keep these parameters fixed. For $\alpha>1$, define on $[0,T_*)^2\times\operatorname{int}\cQ^K\times\cQ^K$
\begin{align}\label{eq:app-doubled-function}
 \Psi_\alpha(t,t',a,a')
 &:=u(t,a)-v(t',a')-\frac\alpha2(|t-t'|^2+|a-a'|_K^2)\nonumber\\
 &\quad-\chi(t,a)-\gamma t-\frac\beta{T_*-t}-\frac\beta{T_*-t'}-\sigma b(a).
\end{align}
The diagonal point from~\eqref{eq:app-diagonal-gap} gives
\begin{equation}\label{eq:app-max-lower}
 \sup\Psi_\alpha>S_R+12\kappa.
\end{equation}
The time-growth and spatial Lipschitz assumptions imply
\begin{equation}\label{eq:app-growth-bound}
 u(t,a)-v(t',a')\le S_R+2(|a|_{\ell^\infty}-R)_++2CT_*+|a-a'|_{\ell^1_K}.
\end{equation}
The cutoff with coefficient $3>2$ controls $a$, the quadratic penalty controls $a'$, the time penalties exclude $T_*$, and $\sigma b(a)$ excludes $\partial\cQ^K$. Hence $\Psi_\alpha$ attains a maximum at $(t_\alpha,t'_\alpha,a_\alpha,a'_\alpha)$. Equations~\eqref{eq:app-growth-bound} and~\eqref{eq:app-max-lower}, using $x-\alpha x^2/2\le1/(2\alpha)$, give a constant $C_*<\infty$, independent of $\alpha>1$, such that
\begin{equation}\label{eq:app-compactness}
 |a_\alpha|_K\vee|a'_\alpha|_K\le C_*,
 \qquad
 \alpha(|t_\alpha-t'_\alpha|^2+|a_\alpha-a'_\alpha|_K^2)+b(a_\alpha)\le C_*.
\end{equation}
Since each inverse trace is bounded by $C_*$, every increment of $a_\alpha$ has smallest eigenvalue at least $C_*^{-1}$. Quadratic closeness gives $|a_\alpha-a'_\alpha|_K\to0$; for fixed $K$, Weyl's inequality then shows $a'_\alpha\in\operatorname{int}\cQ^K$ for all sufficiently large $\alpha$.

If either contact time is zero, the time-growth assumption gives
\begin{equation*}
 u(t_\alpha,a_\alpha)-v(t'_\alpha,a'_\alpha)
 \le u(0,a_\alpha)-v(0,a_\alpha)+C|t_\alpha-t'_\alpha|+|a_\alpha-a'_\alpha|_K.
\end{equation*}
Since $\chi(t_\alpha,a_\alpha)\ge\chi(0,a_\alpha)$, completing squares and using~\eqref{eq:app-initial-cutoff} yields
\begin{equation*}
 \Psi_\alpha(t_\alpha,t'_\alpha,a_\alpha,a'_\alpha)
 \le S_R+6\kappa+\frac{C^2+1}{2\alpha},
\end{equation*}
contradicting~\eqref{eq:app-max-lower} for large $\alpha$. Fix such an $\alpha$, so both contact points lie in $(0,T_*)\times\operatorname{int}\cQ^K$.

\smallskip
\noindent\emph{Hamiltonian comparison and viscosity inequalities.}
The test gradients for $u$ and $v$ are
\begin{equation}\label{eq:app-contact-gradients}
 p^v:=\alpha(a_\alpha-a'_\alpha),
 \qquad
 p^u:=p^v+\nabla_K\chi(t_\alpha,a_\alpha)+\sigma\nabla_Kb(a_\alpha).
\end{equation}
At interior contact points, the spatial Lipschitz and $(\cQ^K)^*$-monotonicity assumptions give
\begin{equation}\label{eq:app-gradient-cone}
 p^u,p^v\in\cQ^K,
 \qquad |p^u|_{\ell^\infty}\vee|p^v|_{\ell^\infty}\le1.
\end{equation}
For example, test monotonicity in a direction $c\in(\cQ^K)^*$ at $a_\alpha+hc$ for the upper test and at $a'_\alpha-hc$ for the lower test. Letting $h\downarrow0$ gives nonnegative pairing with every $c\in(\cQ^K)^*$, hence membership in $((\cQ^K)^*)^*=\cQ^K$. Testing arbitrary directions gives the norm bounds.

The monotonicity of $\xi_K$ and the definition of $M_1$ imply
\begin{equation}\label{eq:app-h-gradient}
 \nabla_K\xi_K(p)\in\cQ^K,
 \qquad |\nabla_K\xi_K(p)|_{\ell^\infty}\le M_1,
 \qquad p\in\cQ^K,\quad |p|_{\ell^\infty}\le1.
\end{equation}
The first inclusion follows by differentiating monotonicity in directions of $(\cQ^K)^*$ at interior points and extending by continuity; the norm bound follows from $(\nabla_K\xi_K(p))_k=\nabla\xi(p_k)$. By~\eqref{eq:app-boundary-monotone},
\begin{equation}\label{eq:app-boundary-dual}
 r:=-\sigma\nabla_Kb(a_\alpha)\in(\cQ^K)^*,
 \qquad p^u-p^v=\nabla_K\chi(t_\alpha,a_\alpha)-r.
\end{equation}
Integrating the derivative of $\xi_K$ along the segment between $p^v$ and $p^u$, which lies in the set in~\eqref{eq:app-gradient-cone}, gives
\begin{align}\label{eq:app-h-difference}
 \xi_K(p^u)-\xi_K(p^v)
 &=\int_0^1\ip{\nabla_K\xi_K(p^v+s(p^u-p^v))}{\nabla_K\chi-r}_K\dd s\nonumber\\
 &\le M_1|\nabla_K\chi(t_\alpha,a_\alpha)|_{\ell^1_K}.
\end{align}
The time derivatives of the upper and lower tests are respectively
\begin{equation*}
 \alpha(t_\alpha-t'_\alpha)+\partial_t\chi(t_\alpha,a_\alpha)+\gamma
 +\frac\beta{(T_*-t_\alpha)^2},
 \qquad
 \alpha(t_\alpha-t'_\alpha)-\frac\beta{(T_*-t'_\alpha)^2}.
\end{equation*}
Subtracting the viscosity supersolution inequality from the subsolution inequality, and discarding the positive time-penalty terms, yields
\begin{equation*}
 \gamma\le \xi_K(p^u)-\xi_K(p^v)-\partial_t\chi(t_\alpha,a_\alpha)
 \le M_1|\nabla_K\chi|_{\ell^1_K}-\partial_t\chi\le0,
\end{equation*}
by~\eqref{eq:app-cutoff-properties}. This contradicts $\gamma>0$ and proves the comparison.
\end{proof}

\small
\bibliographystyle{plain}
\bibliography{ref}

@preamble{"\newcommand{\noop}[1]{} "}

@preamble{"\def\cprime{$'$} "}

@article{camilli2026perceptron,
  author        = {Camilli, Francesco and Contucci, Pierluigi and Gerace, Federica and Mingione, Emanuele},
  title         = {Variational bounds for perceptron learning from structured data},
  journal       = {Preprint, arXiv:2608.04882},
  year          = {2026},
  eprint        = {2608.04882},
  archivePrefix = {arXiv}
}

@article {barcon,
    AUTHOR = {Barra, Adriano and Contucci, Pierluigi and Mingione, Emanuele
              and Tantari, Daniele},
     TITLE = {Multi-species mean field spin glasses. {R}igorous results},
   JOURNAL = {Ann. Henri Poincar\'{e}},
    VOLUME = {16},
      YEAR = {2015},
    NUMBER = {3},
     PAGES = {691--708},
      ISSN = {1424-0637},
   MRCLASS = {82D30 (80A17)},
  MRNUMBER = {3311887},
       DOI = {10.1007/s00023-014-0341-5},
       URL = {https://doi.org/10.1007/s00023-014-0341-5},
}

@article {barra1,
    AUTHOR = {Barra, Adriano and Di Biasio, Aldo and Guerra, Francesco},
     TITLE = {Replica symmetry breaking in mean-field spin glasses through
              the {Hamilton--Jacobi} technique},
   JOURNAL = {J. Stat. Mech. Theory Exp.},
      YEAR = {2010},
     PAGES = {P09006, 22},
      ISSN = {1742-5468},
   MRCLASS = {82B44 (82D30)},
  MRNUMBER = {2727596},
MRREVIEWER = {Aernout C. D. van Enter},
}

@article {abarra,
    AUTHOR = {Agliari, Elena and Barra, Adriano and Burioni, Raffaella and
              Di Biasio, Aldo},
     TITLE = {Notes on the p-spin glass studied via {Hamilton--Jacobi} and
              smooth-cavity techniques},
   JOURNAL = {J. Math. Phys.},
    VOLUME = {53},
      YEAR = {2012},
    NUMBER = {6},
     PAGES = {063304, 29},
      ISSN = {0022-2488},
   MRCLASS = {82B44 (82D30)},
  MRNUMBER = {2977685},
MRREVIEWER = {Mei Yin},
       DOI = {10.1063/1.4729233},
       URL = {https://doi.org/10.1063/1.4729233},
}

@article {barra2,
    AUTHOR = {Barra, Adriano and Del Ferraro, Gino and Tantari, Daniele},
     TITLE = {Mean field spin glasses treated with {PDE} techniques},
   JOURNAL = {Eur. Phys. J. B},
    VOLUME = {86},
      YEAR = {2013},
    NUMBER = {7},
     PAGES = {Art. 332, 10},
      ISSN = {1434-6028},
   MRCLASS = {82D30},
  MRNUMBER = {3089483},
       DOI = {10.1140/epjb/e2013-40334-6},
       URL = {https://doi.org/10.1140/epjb/e2013-40334-6},
}

@article {bovklim,
    AUTHOR = {Bovier, Anton and Klimovsky, Anton},
     TITLE = {The {Aizenman--Sims--Starr} and {G}uerra's schemes for the
              {SK} model with multidimensional spins},
   JOURNAL = {Electron. J. Probab.},
    VOLUME = {14},
      YEAR = {2009},
     PAGES = {no. 8, 161--241},
   MRCLASS = {82B44 (60F10 60K35)},
  MRNUMBER = {2471664},
MRREVIEWER = {Nicoletta Cancrini},
       DOI = {10.1214/EJP.v14-611},
       URL = {https://doi.org/10.1214/EJP.v14-611},
}

@article {gue03,
    AUTHOR = {Guerra, Francesco},
     TITLE = {Broken replica symmetry bounds in the mean field spin glass
              model},
   JOURNAL = {Comm. Math. Phys.},
    VOLUME = {233},
      YEAR = {2003},
    NUMBER = {1},
     PAGES = {1--12},
      ISSN = {0010-3616},
   MRCLASS = {82B44 (82D30)},
  MRNUMBER = {1957729},
MRREVIEWER = {Beat M. Niederhauser},
       DOI = {10.1007/s00220-002-0773-5},
       URL = {https://doi.org/10.1007/s00220-002-0773-5},
}

@article {jagtob,
    AUTHOR = {Jagannath, Aukosh and Tobasco, Ian},
     TITLE = {A dynamic programming approach to the {Parisi} functional},
   JOURNAL = {Proc. Amer. Math. Soc.},
    VOLUME = {144},
      YEAR = {2016},
    NUMBER = {7},
     PAGES = {3135--3150},
      ISSN = {0002-9939},
   MRCLASS = {60K35 (35K58 35Q82 49L20 82B44 82D30)},
  MRNUMBER = {3487243},
MRREVIEWER = {Giuseppe Genovese},
       DOI = {10.1090/proc/12968},
       URL = {https://doi.org/10.1090/proc/12968},
}

@article {mourrat2020extending,
    AUTHOR = {Mourrat, Jean-Christophe and Panchenko, Dmitry},
     TITLE = {Extending the {Parisi} formula along a {Hamilton--Jacobi}
              equation},
   JOURNAL = {Electron. J. Probab.},
    VOLUME = {25},
      YEAR = {2020},
     PAGES = {Paper No. 23, 17},
   MRCLASS = {82D30 (60G15 82B44)},
  MRNUMBER = {4073684},
       DOI = {10.1214/20-ejp432},
       URL = {https://doi.org/10.1214/20-ejp432},
}

@article{mourrat2021nonconvex,
    AUTHOR = {Mourrat, Jean-Christophe},
     TITLE = {Nonconvex interactions in mean-field spin glasses},
   JOURNAL = {Probab. Math. Phys.},
    VOLUME = {2},
      YEAR = {2021},
    NUMBER = {2},
     PAGES = {281--339},
      ISSN = {2690-0998,2690-1005},
   MRCLASS = {82D30 (82B44)},
  MRNUMBER = {4408014},
       DOI = {10.2140/pmp.2021.2.281},
       URL = {https://doi.org/10.2140/pmp.2021.2.281},
}

@article {pan.aom,
    AUTHOR = {Panchenko, Dmitry},
     TITLE = {The {Parisi} ultrametricity conjecture},
   JOURNAL = {Ann. of Math. (2)},
    VOLUME = {177},
      YEAR = {2013},
    NUMBER = {1},
     PAGES = {383--393},
      ISSN = {0003-486X},
   MRCLASS = {60G57 (82D30)},
  MRNUMBER = {2999044},
MRREVIEWER = {Flora Koukiou},
       DOI = {10.4007/annals.2013.177.1.8},
       URL = {https://doi.org/10.4007/annals.2013.177.1.8},
}

@book {pan,
    AUTHOR = {Panchenko, Dmitry},
     TITLE = {The {Sherrington--Kirkpatrick} model},
    SERIES = {Springer Monographs in Mathematics},
 PUBLISHER = {Springer, New York},
      YEAR = {2013},
     PAGES = {xii+156},
      ISBN = {978-1-4614-6288-0; 978-1-4614-6289-7},
   MRCLASS = {82-02 (82B44 82D30)},
  MRNUMBER = {3052333},
MRREVIEWER = {Jos\'{e} Trashorras},
       DOI = {10.1007/978-1-4614-6289-7},
       URL = {https://doi.org/10.1007/978-1-4614-6289-7},
}

@article {pan.multi,
    AUTHOR = {Panchenko, Dmitry},
     TITLE = {The free energy in a multi-species {Sherrington--Kirkpatrick}
              model},
   JOURNAL = {Ann. Probab.},
    VOLUME = {43},
      YEAR = {2015},
    NUMBER = {6},
     PAGES = {3494--3513},
      ISSN = {0091-1798},
   MRCLASS = {60K35 (60G09 82B44)},
  MRNUMBER = {3433586},
MRREVIEWER = {Rongfeng Sun},
       DOI = {10.1214/14-AOP967},
       URL = {https://doi.org/10.1214/14-AOP967},
}

@article {pan.vec,
    AUTHOR = {Panchenko, Dmitry},
     TITLE = {Free energy in the mixed {$p$}-spin models with vector spins},
   JOURNAL = {Ann. Probab.},
    VOLUME = {46},
      YEAR = {\noop{2019}2018},
    NUMBER = {2},
     PAGES = {865--896},
      ISSN = {0091-1798},
   MRCLASS = {60F10 (60G15 60K35 82B44 82D30)},
  MRNUMBER = {3773376},
       DOI = {10.1214/17-AOP1194},
       URL = {https://doi.org/10.1214/17-AOP1194},
}

@article{parisi79,
  title={Infinite number of order parameters for spin-glasses},
  author={Parisi, Giorgio},
  journal={Phys. Rev. Lett.},
  volume={43},
  number={23},
  pages={1754},
  year={1979},
  publisher={APS}
}

@article{parisi80,
  title={A sequence of approximated solutions to the {SK} model for spin glasses},
  author={Parisi, Giorgio},
  journal={J. Phys. A: Math. Gen.},
  volume={13},
  number={4},
  pages={L115--L121},
  year={1980},
  publisher={IOP Publishing}
}

@article {Tpaper,
    AUTHOR = {Talagrand, Michel},
     TITLE = {The {Parisi} formula},
   JOURNAL = {Ann. of Math. (2)},
    VOLUME = {163},
      YEAR = {2006},
    NUMBER = {1},
     PAGES = {221--263},
      ISSN = {0003-486X},
   MRCLASS = {82B44 (82D30)},
  MRNUMBER = {2195134},
MRREVIEWER = {Fran\c{c}ois Germinet},
       DOI = {10.4007/annals.2006.163.221},
       URL = {https://doi.org/10.4007/annals.2006.163.221},
}

@article {tal.sph,
    AUTHOR = {Talagrand, Michel},
     TITLE = {Free energy of the spherical mean field model},
   JOURNAL = {Probab. Theory Related Fields},
    VOLUME = {134},
      YEAR = {2006},
    NUMBER = {3},
     PAGES = {339--382},
      ISSN = {0178-8051},
   MRCLASS = {82B44 (60K40)},
  MRNUMBER = {2226885},
MRREVIEWER = {Anton Bovier},
       DOI = {10.1007/s00440-005-0433-8},
       URL = {https://doi-org.proxy.rubens.ens.fr/10.1007/s00440-005-0433-8},
}

@article{mourrat2020free,
  title={Free energy upper bound for mean-field vector spin glasses},
  author={Mourrat, Jean-Christophe},
  journal={Preprint, arXiv:2010.09114},
  year={2020}
}

@article{genovese2009mechanical,
  title={A mechanical approach to mean field spin models},
  author={Genovese, Giuseppe and Barra, Adriano},
  journal={J. Math. Phys.},
  volume={50},
  number={5},
  pages={053303},
  year={2009},
  publisher={American Institute of Physics}
}

@article{chen2022hamilton,
  title={{Hamilton--Jacobi equations from mean-field spin glasses}},
  author={Chen, Hong-Bin and Xia, Jiaming},
  journal={Probab. Theory Related Fields},
  year={2025},
  volume={192},
  number={3},
  pages={803--873},
}

@book{HJbook,
    Author = {Tomas Dominguez and Jean-Christophe Mourrat},
    Title = {Statistical mechanics of mean-field disordered systems: a {Hamilton--Jacobi} approach},
    SERIES = {Zurich Lectures in Advanced Mathematics},
    PUBLISHER = {EMS Press},
    YEAR = {2024}
}

@article{auffinger2015properties,
  title={{On properties of {Parisi} measures}},
  author={Auffinger, Antonio and Chen, Wei-Kuo},
  journal={Probab. Theory Related Fields},
  volume={161},
  number={3},
  pages={817--850},
  year={2015},
  publisher={Springer}
}

@book{karatzas1991brownian,
  title={Brownian Motion and Stochastic Calculus},
  author={Karatzas, I. and Shreve, S.},
  isbn={9780387976556},
  lccn={96167783},
  series={Graduate Texts in Mathematics (113) (Book 113)},
  url={https://books.google.fr/books?id=ATNy_Zg3PSsC},
  year={1991},
  publisher={Springer New York}
}

@article{chen2023parisi,
      title={{Parisi} {PDE} and convexity for vector spins}, 
      author={Hong-Bin Chen},
      journal = {Preprint, arXiv:2311.10446},
      year={2023}
}

@article{bates2023parisi,
  author  = {Bates, Erik and Sohn, Youngtak},
  title   = {{Parisi} formula for balanced {Potts} spin glass},
  journal = {Comm. Math. Phys.},
  volume  = {405},
  number  = {10},
  pages   = {Paper No. 228, 68},
  year    = {2024},
  doi     = {10.1007/s00220-024-05100-9},
  url     = {https://doi.org/10.1007/s00220-024-05100-9}
}

@article{chen2024selfoverlap,
  author  = {Hong-Bin Chen},
  title   = {On the self-overlap in vector spin glasses},
  journal = {Journal of Mathematical Physics},
  volume  = {65},
  number  = {3},
  pages   = {031901},
  year    = {2024}
}

@article{chen2024simultaneous,
  title={Simultaneous replica-symmetry breaking for vector spin glasses},
  author={Chen, Hong-Bin and Mourrat, Jean-Christophe},
  journal={Preprint, arXiv:2411.14105},
  year={2024}
}

@article {auffinger2015parisi,
    AUTHOR = {Auffinger, Antonio and Chen, Wei-Kuo},
     TITLE = {The {Parisi} formula has a unique minimizer},
   JOURNAL = {Comm. Math. Phys.},
    VOLUME = {335},
      YEAR = {2015},
    NUMBER = {3},
     PAGES = {1429--1444},
      ISSN = {0010-3616},
   MRCLASS = {82D30},
  MRNUMBER = {3320318},
       DOI = {10.1007/s00220-014-2254-z},
       URL = {https://doi.org/10.1007/s00220-014-2254-z},
}

@article{mourrat2023free,
     TITLE = {Free energy upper bound for mean-field vector spin glasses},
    AUTHOR = {Mourrat, Jean-Christophe},
   JOURNAL = {Ann. Inst. Henri Poincar\'{e} Probab. Stat.},
    VOLUME = {59},
      YEAR = {2023},
    NUMBER = {3},
     PAGES = {1143--1182},
}

@article{chen2025free,
    AUTHOR = {Chen, Hong-Bin and Mourrat, Jean-Christophe},
     TITLE = {On the free energy of vector spin glasses with nonconvex
              interactions},
   JOURNAL = {Probab. Math. Phys.},
    VOLUME = {6},
      YEAR = {2025},
    NUMBER = {1},
     PAGES = {1--80},
      ISSN = {2690-0998,2690-1005},
   MRCLASS = {82D30 (82B44)},
  MRNUMBER = {4848391},
MRREVIEWER = {Xingbin\ Pan},
       DOI = {10.2140/pmp.2025.6.1},
       URL = {https://doi.org/10.2140/pmp.2025.6.1},
}

@article{bates2025balanced,
  title   = {Balanced multi-species spin glasses},
  author  = {Bates, Erik and Sohn, Youngtak},
  journal = {Preprint, arXiv:2507.06522},
  year    = {2025}
}

@article {bates2022free,
    AUTHOR = {Bates, Erik and Sohn, Youngtak},
     TITLE = {Free energy in multi-species mixed {$p$}-spin spherical
              models},
   JOURNAL = {Electron. J. Probab.},
    VOLUME = {27},
      YEAR = {2022},
     PAGES = {Paper No. 52, 75},
      ISSN = {1083-6489},
   MRCLASS = {82B44 (60G15 82D30)},
  MRNUMBER = {4416676},
MRREVIEWER = {Flora\ Koukiou},
       DOI = {10.1214/22-ejp780},
       URL = {https://doi.org/10.1214/22-ejp780},
}

@article{chen2026spherical,
  title  = {Free energy of non-convex multi-species spherical spin glasses},
  author = {Chen, Hong-Bin and Mourrat, Jean-Christophe},
  journal={Preprint, arXiv:2609.32710},
  year   = {2026}
}

@article{mourrat2022parisi,
    AUTHOR = {Mourrat, Jean-Christophe},
     TITLE = {The {Parisi} formula is a {Hamilton--Jacobi} equation in
              {Wasserstein} space},
   JOURNAL = {Canad. J. Math.},
    VOLUME = {74},
      YEAR = {2022},
    NUMBER = {3},
     PAGES = {607--629},
      ISSN = {0008-414X,1496-4279},
   MRCLASS = {82B44 (82D30)},
  MRNUMBER = {4430924},
       DOI = {10.4153/s0008414x21000031},
       URL = {https://doi.org/10.4153/s0008414x21000031},
}

@article{chen2026ising,
  author        = {Chen, Hong-Bin and Issa, Victor and Mourrat, Jean-Christophe},
  title         = {Free energy of non-convex multi-species spin glasses with centered {Ising} spins},
  journal       = {Preprint, arXiv:2606.16636},
  year          = {2026},
  eprint        = {2606.16636},
  archivePrefix = {arXiv},
  primaryClass  = {math.PR},
  doi           = {10.48550/arXiv.2606.16636},
  url           = {https://arxiv.org/abs/2606.16636}
}

@article{ho2026convergence,
  author        = {Ho, Fu-Hsuan},
  title         = {Convergence of the free energy of vector spin glasses with non-convex interactions},
  journal       = {Preprint, arXiv:2609.39307},
  year          = {2026},
  eprint        = {2609.39307},
  archivePrefix = {arXiv},
  primaryClass  = {math.PR}
}

@article{brascampLieb1976,
  author = {Brascamp, Herm Jan and Lieb, Elliott H.},
  title = {On extensions of the {Brunn--Minkowski} and {Pr\'ekopa--Leindler} theorems, including inequalities for log concave functions, and with an application to the diffusion equation},
  journal = {J. Funct. Anal.},
  volume = {22},
  number = {4},
  pages = {366--389},
  year = {1976},
  doi = {10.1016/0022-1236(76)90004-5}
}

@article{eldan2020chaos,
  author  = {Eldan, Ronen},
  title   = {A simple approach to chaos for {$p$}-spin models},
  journal = {J. Stat. Phys.},
  volume  = {181},
  pages   = {1266--1276},
  year    = {2020},
  doi     = {10.1007/s10955-020-02626-9},
  eprint  = {2004.14885},
  archivePrefix = {arXiv}
}

@article{frohlichZegarlinski1989,
  author  = {Fr{\"o}hlich, J{\"u}rg and Zegarlinski, Bogus{\l}aw},
  title   = {Spin glasses and other lattice systems with long range interactions},
  journal = {Comm. Math. Phys.},
  volume  = {120},
  number  = {4},
  pages   = {665--688},
  year    = {1989},
  doi     = {10.1007/BF01260392}
}

@article{barbierPanchenkoSaenz2022,
  author  = {Barbier, Jean and Panchenko, Dmitry and S{\'a}enz, Manuel},
  title   = {Strong replica symmetry for high-dimensional disordered log-concave {Gibbs} measures},
  journal = {Information and Inference: A Journal of the IMA},
  volume  = {11},
  number  = {3},
  pages   = {1079--1108},
  year    = {2022},
  doi     = {10.1093/imaiai/iaab027},
  eprint  = {2009.12939},
  archivePrefix = {arXiv}
}

@article{chen2017variational,
  author  = {Chen, Wei-Kuo},
  title   = {Variational representations for the {Parisi} functional and the two-dimensional {Guerra--Talagrand} bound},
  journal = {Ann. Probab.},
  volume  = {45},
  number  = {6A},
  pages   = {3929--3966},
  year    = {2017},
  doi     = {10.1214/16-AOP1154},
  eprint  = {1501.06635},
  archivePrefix = {arXiv}
}

@book{durrett2019probability,
  title={Probability: Theory and Examples},
  author={Durrett, Rick},
  edition={5},
  year={2019},
  publisher={Cambridge University Press},
  doi={10.1017/9781108591034}
}

@misc{AronowLopatto2026,
  author       = {Aronow, P. M. and Lopatto, Patrick},
  title        = {Quantitative {Parisi} formulas and fluctuations in the {Sherrington--Kirkpatrick} model},
  year         = {2026},
  eprint       = {2609.09103v1},
  eprinttype   = {arxiv},
  eprintclass  = {math.PR}
}
\end{document}